\newcommand{\A}{\ensuremath{\mathcal{A}}}
\newcommand{\E}{\ensuremath{\mathbb{E}}}
\newcommand{\B}{\mathrm{B}} 
\newcommand{\I}{\mathrm{I}} 
\newcommand{\II}{\mathrm{II}} 
\newcommand{\K}{\ensuremath{\mathbb{K}}}
\let\L\relax 
\newcommand{\L}{\mathrm{L}}
\let\O\relax
\newcommand{\O}{\ensuremath{\mathbb{O}}}
\newcommand{\M}{\mathrm{M}}
\newcommand{\R}{\ensuremath{\mathbb{R}}}
\newcommand{\C}{\ensuremath{\mathbb{C}}}
\newcommand{\W}{\mathrm{W}}
\newcommand{\kR}{\mathfrak{R}}
\newcommand{\Zc}{\mathrm{Z}}
\newcommand{\can}{\mathrm{can}}
\newcommand{\Id}{\mathrm{Id}}
\newcommand{\JW}{\mathrm{JW}}
\newcommand{\JB}{\mathrm{JB}}
\newcommand{\JBW}{\mathrm{JBW}}
\newcommand{\la}{\langle}\newcommand{\ra}{\rangle}
\renewcommand{\leq}{\ensuremath{\leqslant}}
\renewcommand{\geq}{\ensuremath{\geqslant}}
\newcommand{\qed}{\hfill \vrule height6pt  width6pt depth0pt}
\newcommand{\norm}[1]{ \| #1  \|}
\newcommand{\bnorm}[1]{ \big\| #1  \big\|}
\newcommand{\co}{\colon}
\newcommand{\ot}{\otimes}
\newcommand{\ovl}{\overline}
\newcommand{\sa}{\mathrm{sa}}
\let\i\relax 
\newcommand{\i}{\mathrm{i}}
\newcommand{\ov}{\overset}
\newcommand{\epsi}{\varepsilon}
\renewcommand{\d}{\mathop{}\mathopen{}\mathrm{d}} 
\newcommand{\e}{\mathrm{e}} 
\renewcommand{\d}{\mathop{}\mathopen{}\mathrm{d}} 
\let\cal\relax
\newcommand{\cal}{\mathcal}
\DeclareMathOperator{\tr}{Tr} 
\DeclareMathOperator{\Ran}{Ran} 
\DeclareMathOperator{\sgn}{sgn} 
\let\Re\relax 
\DeclareMathOperator{\Re}{Re} 
\newtheorem{thm}{Theorem}[section]
\newtheorem{defi}[thm]{Definition}
\newtheorem{prop}[thm]{Proposition}
\newtheorem{conj}[thm]{Conjecture}
\newtheorem{quest}[thm]{Question}
\newtheorem{lemma}[thm]{Lemma}
\newtheorem{remark}[thm]{Remark}
\newtheorem{example}[thm]{Example}
\newenvironment{proof}[1][]{\noindent {\it Proof #1} : }{\hbox{~}\qed
\smallskip
}
\numberwithin{equation}{section}
\let\OLDthebibliography\thebibliography
\renewcommand\thebibliography[1]{
  \OLDthebibliography{#1}
  \setlength{\parskip}{0pt}
  \setlength{\itemsep}{0pt plus 0.3ex}
}
\begin{document}
\selectlanguage{english}
\title{\bfseries{Nonassociative $\L^p$-spaces and embeddings in noncommutative $\L^p$-spaces}}
\date{}
\author{\bfseries{C\'edric Arhancet}}

\maketitle

\begin{abstract}
We define a notion of nonassociative $\L^p$-space associated to a $\JBW^*$-algebra (Jordan von Neumann algebra) equipped with a normal faithful state $\varphi$. In the particular case of $\JW^*$-algebras underlying von Neumann algebras, we connect these spaces to a complex interpolation theorem of Ricard and Xu on noncommutative $\L^p$-spaces. We also make the link with the nonassociative $\L^p$-spaces of Iochum associated to $\JBW$-algebras and the investigation of contractively complemented subspaces of noncommutative $\L^p$-spaces. More precisely, we show that our nonassociative $\L^p$-spaces contain isometrically the $\L^p$-spaces of Iochum and that all tracial nonassociative $\L^p$-spaces from $\JW^*$-factors arise as positively contractively complemented subspaces of noncommutative $\L^p$-spaces. 
\end{abstract}

\makeatletter
 \renewcommand{\@makefntext}[1]{#1}
 \makeatother
 \footnotetext{
\noindent {\it 2020 Mathematics subject classification:}
Primary 46L51, 46B70, 17C65.

{\it Key words and phrases}: noncommutative $\L^p$-spaces, nonassociative $\L^p$-spaces, projections, complemented subspaces, $\JBW$-algebras, interpolation.}

{
  \hypersetup{linkcolor=blue}
\tableofcontents
}

\section{Introduction}

The investigation of the structure of projections and complemented subspaces is a classical topic of Banach space geometry. If $1 \leq p < \infty$, a classical result of Ando \cite{And66} says that the contractively complemented subspaces of a classical $\L^p$-space $\L^p(\Omega)$ for a finite measure space $\Omega$ are all isometrically isomorphic to an $\L^p$-space. Moreover,  it is known \cite[Problem 5.4.1]{AbA02} \cite[Theorem 4.10]{Ran01} that a subspace $Y$ of an $\L^p$-space is the range of a positive contractive projection if and only if $Y$ is order isometric to some $\L^p$-space. In the important works \cite{ArF78} and \cite{ArF92}, a complete description of contractively complemented subspaces of the Schatten class $S^p$ was given by Arazy and Friedman for any $1 \leq p \leq \infty$. Such a subspace is isometrically isomorphic to a $\ell^p$-sum of $S^p$-Cartan factors of type I-IV. Note that these Cartan factors are rectangular operators spaces, spaces of antisymmetric operators, spaces of symmetric operators and complex spin factors. 

Using a new and different approach than the one of \cite{ArF92}, it is achieved in \cite{ArR19} that the range of a 2-positive contractive projection on an arbitrary noncommutative $\L^p$-space is completely order and completely isometrically isomorphic to some noncommutative $\L^p$-space. Furthermore, it is showed that a 2-positive contractive projection (in the $\sigma$-finite case) can be seen as a conditional expectation modulo a symmetric two-sided change of density. By deepening the method, we obtained in the paper \cite{Arh23} a description of positively contractively complemented subspaces of noncommutative $\L^p$-spaces associated to a $\sigma$-finite von Neumann algebra and we have highlighted the crucial role of Jordan conditional expectations on von Neumann algebras in this topic, whose range is a $\JW^*$-algebra. More precisely, we showed that a positively contractively complemented subspace of a noncommutative $\L^p$-space is isometric to the range of an $\L^p$-extension of a suitable Jordan conditional expectation. Moreover, the interpolation couples of the type $(\cal{M},\cal{M}_*)_{\frac{1}{p}}$, where $\cal{M}$ is a $\JW^*$-algebra, appear in this context.

Recall that a $\JW^*$-algebra is a weak* closed Jordan-$*$-subalgebra of a von Neumann algebra. This notion was essentially introduced by Edwards in the paper \cite{Edw80}. Actually, he introduced the larger class of $\JBW^*$-algebras, including the exceptional $\JBW^*$-factor $\mathrm{H}_3(\O_\C)$ of all Hermitian ($3 \times 3$)-matrices over the algebra $\O_\C$ of complex octonions. It is worth noting that a von Neumann algebra $\cal{M}$ equipped with the Jordan product 
\begin{equation}
\label{Jordan-product}
x \circ y 
\ov{\mathrm{def}}{=} \frac{1}{2}(xy+yx), \quad x,y \in \cal{M} 
\end{equation}
is an example of $\JW^*$-algebra, hence a $\JBW^*$-algebra. It is known that the selfadjoint parts of $\JBW^*$-algebras and $\JW^*$-algebras are precisely the $\JBW$-algebras and the $\JW$-algebras. The last ones are the  real linear spaces of selfadjoint operators which are closed for the weak* topology and closed under the Jordan product $\circ$. In the continuity of the classical work \cite{JNW34} of Jordan, von Neumann and Wigner on Jordan algebras, the theory of $\JW$-algebras was introduced by Topping in \cite{Top65} and developed by several authors. Initially, these Jordan algebras were defined for understanding the notion of observable in quantum mechanics. We refer to the books \cite{AlS03}, \cite{ARU97}, \cite{CGRP14}, \cite{CGRP18}, \cite{HOS84}, to the important paper \cite{Sto66} and references therein for more information on these algebras. Note that by \cite[Theorem 3.9]{Shu79} 
a $\JBW$-algebra $A$ can be uniquely decomposed as a direct sum 
\begin{equation}
\label{decompo}
A
=A_{\mathrm{sp}} \oplus A_{\textrm{exp}}
\end{equation}
where $A_{\mathrm{sp}}$ is a $\JW$-algebra and $A_{\textrm{exp}}$ is a purely exceptional $\JBW$-algebra, i.e.~isomorphic to the algebra $\mathrm{C}(X,\mathrm{H}_3(\O))$ of continuous functions from $X$ with values in the $\JBW$-factor $\mathrm{H}_3(\O)$ for a hyperstonean compact Hausdorff space $X$. Recall that $\O$ stands for the algebra of octonions.

These are related to the study of the state spaces of $\mathrm{C}^*$-algebras, see \cite{AHOS80}, \cite{AlS78} and the survey \cite{Alf79}. Furthermore, these algebras are connected to bounded symmetric domains, see \cite[pp.~92-93]{HOS84} for a brief overview of this topic. Finally, the category of $\sigma$-finite $\JBW$-algebras is equivalent to the category of facially homogeneous self-dual cone in real Hilbert spaces, see \cite{Ioc84}. This fact can be seen as a generalization of the well-known one-to-one correspondence between $\sigma$-finite von Neumann algebras and orientable facially homogeneous self-dual cones in complex Hilbert spaces introduced by Connes in \cite{Con74}. We also refer to \cite{AlS98} for an  explanation of the relevance of the concept of orientation for the passage from Jordan structures to associative structures in operator algebras.

In this paper, we study interpolation spaces $\L^p(\cal{M},\varphi) \ov{\mathrm{def}}{=} (\cal{M},\cal{M}_*)_{\frac{1}{p}}$ defined by an interpolation couple $(\cal{M},\cal{M}_*)$, where $\cal{M}$ is a $\JBW^*$-algebra and $\cal{M}_*$ is its predual and where the compatibility is obtained with the help of a normal faithful state $\varphi$ on $\cal{M}$. We will show that these spaces can be seen as nonassociative $\L^p$-spaces. In particular, we will prove the duality relation $(\L^p(\cal{M},\varphi))^*=\L^{p^*}(\cal{M},\varphi)$ and, if $\varphi$ is a \textit{trace}, a version of H\"older's inequality. In the particular case of the underlying $\JW^*$-algebra of a von Neumann algebra equipped with a normal \textit{tracial} state, we recover Kosaki's noncommutative $\L^p$-spaces defined in \cite{Kos84}. Finally, in the paper \cite{Arh23b}, we will generalize the nonassociative $\L^p$-spaces of this paper by introducing the $\L^p$-spaces $\L^p(X,\varphi)$ of a ($\sigma$-finite) $\JBW^*$-triple $X$ equipped with a \textit{suitable} functional $\varphi \in X_*$ satisfying $\norm{\varphi}_{X_*}=1$. Recall that a $\JBW^*$-algebra $\cal{M}$ admits a canonical structure of $\JBW^*$-triple.

Our construction allows us to interpret in Example \ref{Ex-AVN} a complex interpolation result of Ricard and Xu \cite[Corollary 1.2]{RiX11}, which has its roots in the remarkable paper \cite{JuP08}. Indeed, it gives an isomorphic description of the norms of the nonassociative $\L^p$-spaces associated with the canonical underlying $\JW^*$-algebra of a von Neumann algebra. Furthermore, we connect our spaces to the nonassociative $\L^p$-spaces $\L^{p,I}(A,\tau)$ introduced by Iochum in \cite{Ioc86}, which are \textit{real} Banach spaces associated to a $\JBW$-algebra $A$ equipped with a normal finite faithful trace $\tau$. We will show that our associative $\L^p$-space $\L^p(\cal{M},\varphi)$ contains isometrically the space $\L^{p,I}(A,\tau)$ if $\varphi$ is the complexification of the trace $\tau$, see Theorem \ref{Th-Iochum}.

Our last result is the following theorem which says that a large class of \textit{tracial} nonassociative $\L^p$-spaces arise as positively contractively complemented subspaces of noncommutative $\L^p$-spaces. It can be seen as a kind of converse to the main result of \cite{Arh23b}.

\begin{thm}
\label{th-embedding-intro}
Let $\cal{M}$ be a $\JW^*$-factor with separable predual equipped with a normal finite faithful trace $\tau$. Suppose that $1 \leq p < \infty$. Then the Banach space $\L^p(\cal{M},\tau)$ is isometric to a positively 1-complemented subspace of a noncommutative $\L^p$-space associated with a finite von Neumann algebra.
\end{thm}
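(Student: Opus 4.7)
The strategy is to embed $\cal{M}$ into a finite von Neumann algebra $(\cal{R},\tilde\tau)$ as the range of a normal trace-preserving Jordan conditional expectation $E$, and then to transfer this picture to the $\L^p$-level via functoriality of complex interpolation. The $\L^p$-extension $E_p$ of $E$ is then the desired positive contractive projection, whose range is an isometric copy of $\L^p(\cal{M},\tau)$ inside $\L^p(\cal{R},\tilde\tau)$.

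To construct $(\cal{R},\tilde\tau,E)$, I would invoke St\o rmer's reversibility theorem together with the classification of $\JW^*$-factors carrying a normal faithful finite trace (which forces type $\I_n$ with $n\neq 2$, type $\I_2$ spin, or type $\II_1$): in each case $\cal{M}$ can be realised as the self-adjoint fixed-point set of a weak$^*$-continuous period-two $*$-antiautomorphism $\alpha$ of a finite von Neumann algebra $\cal{R}$ with separable predual\,---\,via standard matrix-algebra models (transpose or a symplectic involution) for the non-spin type $\I$ cases, a complex Clifford algebra with its canonical involution for the spin factors, and the Effros--St\o rmer enveloping factor for the type $\II_1$ case. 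Since $\cal{R}$ is finite, it carries a normal faithful tracial state $\tilde\tau$; uniqueness of such a trace in the factorial case yields $\tilde\tau\circ\alpha=\tilde\tau$ and, after rescaling, $\tilde\tau|_\cal{M}=\tau$. Setting
$$
E(x)\ov{\mathrm{def}}{=}\tfrac{1}{2}\bigl(x+\alpha(x)\bigr),\quad x\in\cal{R},
$$
yields a normal unital positive linear projection onto $\cal{M}$; the identity $\alpha(x\circ y)=x\circ\alpha(y)$ for $x\in\cal{M}$, $y\in\cal{R}$ (a direct consequence of $\alpha(xy)=\alpha(y)\alpha(x)$ and $\alpha(x)=x$) gives the Jordan conditional-expectation property $E(x\circ y)=x\circ E(y)$, and $\tilde\tau\circ E=\tilde\tau$ is automatic.

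With $(\cal{R},\tilde\tau,E)$ in hand, I would apply the standard retraction principle for the complex method of interpolation. The inclusion $\iota\co\cal{M}\to\cal{R}$ and the map $\psi\mapsto\psi\circ E$ from $\cal{M}_*$ into $\cal{R}_*$ form an isometric morphism of Banach couples $(\cal{M},\cal{M}_*)\to(\cal{R},\cal{R}_*)$, and $E$ (together with its restriction at the predual level) is a norm-one retraction at both endpoints. Functoriality at the parameter $\tfrac{1}{p}$ then produces an isometric embedding
$$
\L^p(\cal{M},\tau)=(\cal{M},\cal{M}_*)_{\frac{1}{p}}\hookrightarrow(\cal{R},\cal{R}_*)_{\frac{1}{p}}=\L^p(\cal{R},\tilde\tau),
$$
whose range coincides with the image of the interpolated map $E_p$, a positive contractive projection on $\L^p(\cal{R},\tilde\tau)$.

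The main obstacle I anticipate is the uniform production of $(\cal{R},\alpha,\tilde\tau)$ across all classes of $\JW^*$-factors with separable predual admitting a normal faithful finite trace; the type $\II_1$ case in particular requires the nontrivial Effros--St\o rmer classification, and one has to verify that separability of the predual and finiteness propagate from $\cal{M}$ to $\cal{R}$ in every case. Once this structural input is secured, the interpolation step is essentially routine.
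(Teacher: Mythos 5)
Your reduction to the case where $\cal{M}$ is the fixed-point algebra of an involutive $*$-antiautomorphism $\alpha$ of a finite von Neumann algebra $\cal{R}$, followed by the retraction $E=\frac{1}{2}(\Id+\alpha)$ and interpolation, is exactly the paper's argument for the \emph{purely real} case (the case where $\cal{M}$ is a full underlying $\JW^*$-algebra of a von Neumann algebra being trivial). The genuine gap is the type $\I_2$ case. The selfadjoint fixed-point set of a $*$-antiautomorphism is automatically reversible, since it is closed under $a_1\cdots a_k+a_k\cdots a_1$; but a spin factor built on a real Hilbert space of dimension $\geq 6$ --- in particular the infinite-dimensional spin factor, which does carry a normal finite faithful trace and has separable predual, so is a bona fide instance of the theorem --- is not reversible in any faithful representation. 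Hence no choice of $(\cal{R},\alpha)$ can produce these factors as fixed points, and the ``canonical involution of the Clifford algebra'' you propose has a fixed-point set strictly larger than the spin factor already in low dimensions (it fixes, for instance, the degree-four and degree-five products of the generators, which lie outside the linear span of $1$ and the spin system). This is precisely where the paper abandons the $\frac{1}{2}(\Id+\alpha)$ device: for type $\I_2$ it embeds the complex spin factor into a matrix algebra or the hyperfinite $\II_1$ factor (via the CAR algebra) and projects onto it by a trace-preserving normal faithful \emph{Jordan conditional expectation}, whose existence is established separately by a Radon--Nikodym argument for traces on $\JBW$-algebras and which is in general not induced by any antiautomorphism.

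Two smaller points. First, the relevant trichotomy is not ``type $\I_n$, $\I_2$ spin, $\II_1$'' but St\o rmer's decomposition into (i) selfadjoint parts of von Neumann algebras, (ii) purely real factors, (iii) type $\I_2$; only (ii) is covered by your construction, and the nontrivial structural input there is the real von Neumann algebra theory rather than an Effros--St\o rmer classification. Second, the interpolation step is correct in outline but not quite ``routine'': one must check that the embedding $x\mapsto\tau(x\circ\cdot)$ defining $\L^p(\cal{M},\tau)$ is compatible with the embedding of $\cal{R}$ into $\cal{R}_*$, and that the preadjoint of the (selfadjoint) projection identifies with its $\L^1$-extension, so that the retraction lemma (Lemma \ref{Lemma-interpolation}) applies with $C=\cal{M}_*$ and identifies the range of $E_p$ with $(\cal{M},\cal{M}_*)_{1/p}$; the paper carries out these identifications explicitly.
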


\paragraph{Structure of the paper}
The paper is organized as follows. In Section \ref{Sec-Jordan}, we recall some information on Jordan algebras. In Section \ref{Sec-Lp-state-JBWstar}, we introduce nonassociative $\L^p$-spaces associated to $\JBW^*$-algebras by using complex interpolation. Moreover, we give an interpretation of the complex interpolation result of Ricard and Xu in Example \ref{Ex-AVN}. Furthermore, we describe the precise link with the nonassociative $\L^p$-spaces of Iochum in Theorem \ref{Th-Iochum}. In Section \ref{Sec-nonassociative-Lp-spaces}, we show that a \textit{tracial} nonassociative $\L^p$-space of a $\JW^*$-factor is isometric to a positively contractively complemented subspace of a noncommutative $\L^p$-space. Finally, we raise in Section \ref{sec-open-questions} several open problems related to the content of this paper.

\section{Preliminaries}
\label{Sec-Jordan}

\paragraph{Jordan algebras} A Jordan algebra $A$ over a field $\K$ is a vector space $A$ over $\K$ equipped with a (not necessarily associative) commutative bilinear product $\circ$ that satisfies $x \circ (x^2 \circ y) =x^2\circ (x \circ y)$ for any $x,y \in A$, see e.g.~\cite[Definition 1.1 p.~3]{AlS03} or \cite[p.~162]{CGRP14}. This means that the multiplication operators by $x$ and $x^2$ commute.


Following \cite[Definition 1.5 p.~5]{AlS03} and \cite[3.1.4 p.~76]{HOS84}, a $\JB$-algebra is a Jordan algebra over the scalar field $\R$ equipped with a complete norm satisfying the properties 
\begin{equation}
\label{def-JB}
\norm{x \circ y} \leq \norm{x} \norm{y}, \quad \norm{x^2}=\norm{x}^2 
\quad \text{and} \quad \norm{x^2} \leq \norm{x^2+y^2}
\end{equation}
for any $x,y \in A$. A $\JBW$-algebra is a $\JB$-algebra which is a dual Banach space \cite[p.~111]{HOS84}. In this case, the predual is unique.

A $\JB^*$-algebra \cite[p.~91]{HOS84} \cite[Definition 3.3.1 p.~345]{CGRP14} is a complex Banach space $\cal{M}$ which is a complex Jordan algebra equipped with an involution satisfying 
\begin{equation}
\label{def-JBstar}
\norm{x \circ y} \leq \norm{x} \norm{y},\quad  \norm{x^*}=\norm{x} 
\quad \text{and} \quad \norm{\{x,x^*,x\}}=\norm{x}^3
\end{equation}
for any $x,y \in \cal{M}$, where we use the Jordan triple product $\{x,y,z\}\ov{\mathrm{def}}{=} (x \circ y) \circ z+(y \circ z) \circ x-(x \circ z) \circ y$. An element $x$ in a $\JB^*$-algebra is called selfadjoint if $x^* = x$. The selfadjoint part of a $\JB^*$-algebra is the real subalgebra consisting of all selfadjoint elements. A $\JBW^*$-algebra \cite[p.~4]{CGRP18} is a $\JB^*$-algebra which is a dual Banach space. The selfadjoint part of a $\JBW^*$-algebra has a canonical structure of $\JBW$-algebra by \cite[Corollary 5.1.29 p.~9]{CGRP18}. Conversely, if $A$ is a $\JBW$-algebra then by \cite[Corollary 5.1.41 p.~15]{CGRP18} there exists a unique $\JBW^*$-algebra $\cal{M}$ such that $A$ is the selfadjoint part of $\cal{M}$. We also refer to \cite[pp.~21-22]{BHK17} and \cite[Corollary 5.1.29 p.~9]{CGRP18} for useful results which can be used for transferring results from $\JBW$-algebras to $\JBW^*$-algebras and vice versa. In particular, if $\varphi \in \cal{M}_*$ satisfies $\varphi(x)=\ovl{\varphi(x^*)}$ for any $x \in \cal{M}$, we have
\begin{equation}
\label{norm-VU}
\norm{\varphi}_{\cal{M}_*}
=\norm{\varphi|A}_{A_*}.
\end{equation}
The notion of positivity in a $\JB$-algebra and in a $\JB^*$-algebra is defined in \cite[Section 3.3]{HOS84} and \cite[p.~9]{CGRP18}.

%


\paragraph{Centers and factors} Two elements $a$ and $b$ of a Jordan algebra $A$ are said to operator commute \cite[p.~44]{HOS84} if for any $z \in A$ we have $(x \circ z) \circ y= x \circ (z \circ y)$. The centre $\Zc(A)$ of $A$ is the set of all elements of $A$ which operator commute with all elements of $A$. An element $x \in A$ is central if it belongs to the center. By \cite[Lemma 2.5.3 p.~45]{HOS84}, the centre is an associative subalgebra of $A$. Following \cite[p.~115]{HOS84}, if the centre of a $\JBW$-algebra $A$ only consists of scalar multiples of the identity, we say that $A$ is a $\JBW$-factor. We refer to \cite[Theorem 6.1.40 p.~362]{CGRP18} for a classification of $\JBW$-factors.

If $p$ is a projection (i.e.~$p \circ p=p$) of a $\JBW$-algebra, the smallest central projection $q$ such that $q \geq p$ is called the central cover of $p$ and denoted by $c(p)$ \cite[Definition 2.38 p.~56]{AlS03}. We say that a projection $p$ of a $\JBW$-algebra $A$ is abelian if the algebra $A_p \ov{\mathrm{def}}{=} \{\{p,x,p\} : x \in A\}$ is associative \cite[p.~122]{HOS84}. 


\paragraph{$\JW$-algebras} Recall that a (concrete) $\JW$-algebra \cite[p.~95]{HOS84} \cite[Definition 2.70 p.~70]{AlS03} 
 is a weak* closed Jordan subalgebra of $\B(H)_\sa$, that is a real linear space of selfadjoint operators which is closed for the weak* topology and closed under the Jordan product $\circ$. Here $H$ is a complex Hilbert space. Note that a $\JW$-algebra is a $\JBW$-algebra by \cite[p.~95]{HOS84}. By \cite[Proposition 1.49 p.~28]{AlS03}, two elements $x$ and $y$ of a $\JW$-algebra operator commute if and only if $x$ and $y$ commute in $\B(H)$.


A $\JW$-algebra $A$ is said to be reversible \cite[Definition 4.24]{AlS03} \cite[p.~25]{HOS84} if it is closed under symmetric products, i.e.~if $a_1,\ldots,a_k \in A$ then 
$$
a_1a_2 \cdots a_k+a_k\cdots a_2a_1 \in A.
$$

\paragraph{$\JW^*$-algebras} We define a $\JW^*$-algebra as a complex subspace of $\B(H)$ which is closed for the weak* topology and closed under the Jordan product $\circ$ and the involution, for some complex Hilbert space $H$.  
A $\JW^*$-algebra is a $\JBW^*$-algebra. The selfadjoint part of a $\JW^*$-algebra is a $\JW$-algebra. Conversely, if $A$ is a $\JW$-algebra included in $\B(H)$ then the complexification $A_\C=A+\i A$ is a $\JW^*$-algebra included in $\B(H)$. 



\begin{example} \normalfont
\label{spin-factor}
Let $\cal{H}$ be a separable real Hilbert space of dimension at least 2, and let $\R 1$ denote a one dimensional real Hilbert space with unit vector 1. Let $A \ov{\mathrm{def}}{=} \cal{H} \oplus \R 1$ and consider the product $\circ$ on $A$ defined by
\begin{equation}
\label{product-spin-factor}
(a+\lambda 1)  \circ  (b  + \mu 1)  
\ov{\mathrm{def}}{=} \mu a  + \lambda b  +  (\langle a ,  b\rangle  +  \lambda \mu)1, \quad a,b \in \cal{H}, \lambda,\mu \in \R
\end{equation}
and the norm $\norm{a+\lambda 1}_A  \ov{\mathrm{def}}{=} \norm{a}_\cal{H}  + |\lambda|$. Then $A$ is a $\JBW$-factor by \cite[Proposition 3.37 p.~92]{AlS03}, called (real) spin factor which admits a concrete representation as a $\JW$-algebra by \cite[Theorem 4.1 p.~103]{AlS03}, which can be reversible or not. These algebras were introduced by Topping in \cite{Top65} and \cite{Top66}. He showed in \cite[Theorem 3]{Top66} (see also \cite[Proposition 6.1.5 p.~137]{HOS84}) that two spin factors are isomorphic if and only if their real Hilbert space dimensions are equal. 

Now, we recall the standard matrix representations of spin factors. Let
\[
\sigma_1
\ov{\mathrm{def}}{=} \begin{bmatrix}
   1  & 0  \\
   0  & -1  \\
\end{bmatrix}
, \quad
\sigma_2 \ov{\mathrm{def}}{=} \begin{bmatrix} 
0&1\\ 
1&0
\end{bmatrix}
, \quad
\sigma_3
\ov{\mathrm{def}}{=} \begin{bmatrix} 
0&\i\\ 
-\i&0
\end{bmatrix}
\]
be the Pauli matrices. Let $n \geq 1$ be a integer. Define the matrices
$s_1 \ov{\mathrm{def}}{=} \sigma_1 \ot \I_2^{\ot n-1},  
s_2 \ov{\mathrm{def}}{=} \sigma_2 \ot \I_2^{\ot n-1},s_3 \ov{\mathrm{def}}{=} \sigma_3 \ot \sigma_1 \ot \I_2^{\ot n-2}, 
s_4 \ov{\mathrm{def}}{=} 
\sigma_3 \ot \sigma_2\ot \I_2^{\ot n-2},\ldots,
s_{2i-1} \ov{\mathrm{def}}{=} \sigma_3^{\ot i-1} \ot \sigma_1 \ot \I_2^{\ot n-i}, 
s_{2i} \ov{\mathrm{def}}{=} 
\sigma_3^{\ot i-1} \ot \sigma_2 \ot \I_2^{\ot n-i},
\ldots,
s_{2n-1} \ov{\mathrm{def}}{=} \sigma_3^{\ot n-1} \ot \sigma_1,
s_{2n} \ov{\mathrm{def}}{=} \sigma_3^{\ot n-1 } \ot \sigma_2$ of $\M_{2^n}(\mathbb{C})$.  Note that it is known that $\{s_1,\ldots,s_k\}$ is a spin system, i.e.~consists of symmetries $\not=\pm \Id$ such that $s_is_j=-s_is_j$ if $i \not= j$.

If $k \in \{2n-1,2n\}$ with $k \geq 2$, the real linear span of the set $\{1,s_1,\ldots,s_k\}$ is a $\JW$-factor which is isomorphic to a spin factor by \cite[pp.~140-141]{HOS84} and \cite[pp.~103-104]{AlS03}. More precisely, if $(e_k)$ is an orthonormal basis of the real Hilbert space $\cal{H}$ with $\dim \cal{H} =k$, then $A \to \{1,s_1,\ldots,s_k\}$, $e_k \mapsto s_k$ defines an isomorphism.

%

For the infinite dimensional spin factor, we need a bigger algebra. To this end, we consider for any integer $n \geq 1$, the normal injective $*$-homomorphism $i_n \co \M_{2^n}(\mathbb{C}) \to \M_{2^{n+1}}(\mathbb{C})$ given by
\begin{equation}
\label{TowerEmbed}
i_n(x) 
\ov{\mathrm{def}}{=} x \ot \I_2=\begin{bmatrix}
x & 0 \\ 
0 & x
\end{bmatrix},\quad x \in \M_{2^n}(\mathbb{C}).
\end{equation}
The family $(\M_{2^n}(\mathbb{C}),i_n)_{n \geq 1}$ is a directed system of $\mathrm{C}^*$-algebras in the sense of \cite[p.~864]{KaR97b}. The inductive limit of this system is a $\mathrm{C}^*$-algebra $\cal{A}$ called CAR algebra (<<canonical anticommutation relations>>) and more precisely an infinite tensor product, which contains each $\M_{2^n}(\mathbb{C})$. We can see each $s_i$ as an element of $\cal{A}$. The closed real linear span of $\{1,s_1,s_2\ldots\}$ is an infinite-dimensional spin factor contained in $\cal{A}$.
\end{example}

\paragraph{Type $\I_n$} Let $n$ be some cardinal number. Following \cite[p.~86]{AlS03}, we say that a $\JBW$-algebra $A$ is of type $\I_n$ if there exists a family $(p_i)$ of $n$ abelian projections in $A$ such that $1=\sum_i p_i$ and $c(p_i)=1$ for any $i$. 

\begin{example} \normalfont
\label{ex-type-II-JW}
The $\JBW$-algebras of type $\I_2$ were classified by Stacey in \cite[Theorem 2]{Sta82}. If $A$ is a $\JBW$-algebra with separable predual then $A$ has type $\I_2$ if and only if there exist an index set $I$, a family $(\Omega_i)_{i \in I}$ of second countable locally compact spaces, a family $(\mu_i)_{i \in I}$ of Radon measures on the spaces $\Omega_i$ and a family $(S_i)_{i \in I}$ of spin factors, each of dimension strictly greater than 1 and at most countable giving an isomorphism 
$$
A
=\oplus_{i \in I} \L^\infty_\R(\Omega_i,S_i).
$$ 
\end{example}

%





\paragraph{Traces} A trace on a $\JBW$-algebra $A$ is a function $\tau$ defined on the subset $A_+$ of positive
elements of $A$ with values in $[0,+\infty]$ satisfying the following conditions:
\begin{enumerate}
\item $\tau(x+y)=\tau(x)+\tau(y)$ for any $x, y \in A_+$,
\item $\tau(\lambda x)=\lambda\tau(x)$ for any $x \in A_+$ and any $\lambda \geq 0$, where $0.(+\infty)=0$,
\item $\tau(s \circ x \circ s)=\tau(x)$ for any $x \in A_+$ and any arbitrary symmetry $s$ of $A$.
\end{enumerate}
Recall that a symmetry is an element $s$ such that $s \circ s=1$. The trace $\tau$ is said to be faithful if $\tau(x) > 0$ for all non-zero $x \in A_+$, finite if $\tau(1) < + \infty$, semifinite if given any non-zero $x \in A_+$ there is a non-zero $y \in A_+$ such that $y \leq x$ with $\tau(y) < +\infty$. The trace $\tau$ is normal if for every increasing net $(x_i)$ of positive elements such that $x_i \to x$ where $x \in A_+$, we have $\tau(x_i) \to \tau(x)$. We refer to \cite{AyA85}, \cite{Ayu82}, \cite{Ayu92}, \cite{Kin83} and \cite{PeS82} for more information on traces on $\JBW$-algebras.

Every finite trace on a $\JBW$-algebra $A$ can be extended by linearity to a linear functional on $A$. Thus a finite trace on a $\JBW$-algebra $A$ can be seen as a positive linear functional $\tau$ satisfying the condition $\tau(s \circ x \circ s)=\tau(x)$ for all $x \in A$ and all symmetries $s \in A$. By \cite[Lemma 5.18 p.~147]{AlS03}, it is known that the last condition is equivalent to the formula 
\begin{equation}
\label{Def-trace}
\tau (x \circ (y \circ z)) 
= \tau((x \circ y) \circ z), \quad x,y,z \in A.
\end{equation}
By complexification with \cite[Corollary 5.1.41 p.~15]{CGRP18}, we obtain a normal faithful positive linear functional on the associated $\JBW^*$-algebra $\cal{M}$ satisfying 
\begin{equation}
\label{trace}
\tau (x \circ(y \circ z))
=\tau((x \circ y) \circ z), \quad x,y,z \in \cal{M}.
\end{equation}
We say that such a map is a normal finite faithful trace on $\cal{M}$.








\begin{example} \normalfont
\label{ex-trace-spin} Consider a spin factor $S=\cal{H} \oplus \R 1$ as in Example \ref{spin-factor}. By \cite[Lemma 5.21 p.~149]{AlS03} (see also \cite[Proposition 6.1.7 p.~137]{HOS84} and \cite{Top66}), there exists a unique tracial state $\tau$. Moreover, the same reference shows that $\tau$ is normal, faithful and defined by $\tau(1)=1$ and $\tau(a)=0$ for any $a \in \cal{H}$. 
\end{example}


\paragraph{Jordan conditional expectations} We say that a positive map $T \co A \to A$ on a $\JB^*$-algebra $A$ is faithful if $T(x)=0$ for some $x \in A_+$ implies $x=0$. 

Let $\cal{N}$ be a unital $\JW^*$-subalgebra of a $\JW^*$-algebra $\cal{M}$. A linear map $Q \co \cal{M} \to \cal{M}$ is said to be a Jordan conditional expectation on $\cal{N}$ if it is a unital positive map of range $\cal{N}$ which is $\cal{N}$-modular, that is 
\begin{equation}
\label{Def-cond-exp-JBstar}
Q(x \circ Q(y))
=Q(x) \circ Q(y), \quad x,y \in \cal{M}.
\end{equation}
With $x=1$ and $y \in \cal{N}$, we obtain $Q(y)=Q(1 \circ y) \ov{\eqref{Def-cond-exp-JBstar}}{=} Q(1) \circ y=1 \circ y=y$. It follows that $Q$ is the identity on $\cal{N}$. Consequently, $Q$ is a projection.

\paragraph{Selfadjoint maps} Let $\cal{M}$ be a von Neumann algebra equipped with a normal semifinite faithful trace $\tau$. Recall that a \textit{positive} normal contraction $T \co \cal{M} \to \cal{M}$ is selfadjoint with respect to $\tau$ \cite[p.~49]{JMX06} if for any $x,y \in \cal{M} \cap \L^1(\cal{M})$ we have $\tau(T(x)y)=\tau(xT(y))$. We have a similar notion for a normal faithful state $\varphi$ on $\cal{M}$ instead of the trace $\tau$, see \cite[p.~122]{JMX06}. 
We have the following observation of \cite{Arh23} which will be used in Section \ref{Sec-nonassociative-Lp-spaces}.

\begin{prop}
\label{Prop-selfadjoint}
Let $\cal{M}$ be a von Neumann algebra equipped with a normal semifinite faithful trace. Let $Q \co \cal{M} \to \cal{M}$ be a trace preserving normal Jordan conditional expectation. Then $Q$ is selfadjoint.   
\end{prop}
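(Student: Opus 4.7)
The goal is to establish $\tau(Q(x)y)=\tau(xQ(y))$ for all $x,y \in \cal{M} \cap \L^1(\cal{M})$. The plan combines two elementary ingredients: the fact that the trace does not distinguish the ordinary product from the Jordan product, and the modularity identity \eqref{Def-cond-exp-JBstar} applied in a symmetric fashion.

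First I would note that since $\tau$ is tracial, $\tau(ab)=\tau(ba)$ whenever the products are in $\L^1(\cal{M})$, so $\tau(a \circ b)=\tau(ab)$. This reduces the statement to proving
$$
\tau(Q(x) \circ y)=\tau(x \circ Q(y)).
$$
Next I would apply the Jordan-modularity identity \eqref{Def-cond-exp-JBstar} twice. With the pair $(y,x)$ it reads $Q(y \circ Q(x))=Q(y) \circ Q(x)$; taking $\tau$ of both sides, using that $Q$ preserves $\tau$, and invoking the commutativity of $\circ$, one obtains
$$
\tau(Q(x) \circ y)=\tau(y \circ Q(x))=\tau(Q(y \circ Q(x)))=\tau(Q(y) \circ Q(x))=\tau(Q(x) \circ Q(y)).
$$
Symmetrically, applying \eqref{Def-cond-exp-JBstar} to the pair $(x,y)$ and the same trace argument yields $\tau(x \circ Q(y))=\tau(Q(x) \circ Q(y))$. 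Comparing the two identities gives $\tau(Q(x) \circ y)=\tau(x \circ Q(y))$, which combined with the first step completes the proof.

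The only real subtlety is making sure every trace above is well-defined and that the hypothesis that $Q$ preserves $\tau$ actually applies at the right places: here $y \circ Q(x)$ and $x \circ Q(y)$ each lie in $\cal{M} \cap \L^1(\cal{M})$ because one factor is bounded and the other is in $\cal{M} \cap \L^1(\cal{M})$, and ``trace-preserving'' for a normal positive map is to be interpreted (by linearity from the positive cone and normality) as $\tau \circ Q=\tau$ on $\cal{M} \cap \L^1(\cal{M})$. I do not anticipate any serious obstacle beyond these routine measurability/integrability checks; the argument is really a transparent three-line symmetrization built on \eqref{Def-cond-exp-JBstar}.
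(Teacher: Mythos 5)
Your argument is correct: the reduction $\tau(ab)=\tau(a\circ b)$ via traciality, followed by the two symmetric applications of the modularity identity \eqref{Def-cond-exp-JBstar} together with $\tau\circ Q=\tau$, does yield $\tau(Q(x)y)=\tau(xQ(y))$, and your integrability remarks (one factor bounded, the other in $\cal{M}\cap\L^1(\cal{M})$, plus $Q(z)\in\L^1$ for $z\in\cal{M}_+\cap\L^1$ by trace preservation and positivity) close the only real gaps. Note that the paper itself offers no proof of this proposition --- it is quoted from \cite{Arh23} --- so there is nothing to compare against line by line; your symmetrization through the common value $\tau(Q(x)\circ Q(y))$ is the natural and expected argument.
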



%

\paragraph{Existence of Jordan conditional expectations}
Let $A$ be a $\JBW$-algebra and $B$ a $\JBW$-subalgebra of $A$. Suppose that $\tau$ is a normal faithful tracial state on $A$. If we also denote by $\tau$ the restriction of $\tau$ on $B$, it is essentially showed in \cite[Theorem 4.2]{HaS95} (combined with \cite[Remark 3.7]{HaH84}) that there exists a faithful normal Jordan conditional expectation $Q \co A \to A$ onto $B$ such that $\tau \circ Q=\tau$. See \cite[Theorem p.~78]{Edw86} for a previous preliminary result without proof. We give a complete elementary proof (i.e.~without modular theory) of this result in Section \ref{Existence}. By complexification, we obtain the following result.

\begin{prop}
\label{prop-Jordan-conditional-expectation-existence-bis}
Let $\cal{M}$ be a $\JBW^*$-algebra and $\cal{N}$ be a $\JBW^*$-subalgebra of $\cal{M}$. Let $\tau$ be a normalized normal finite faithful trace on $\cal{M}$. Then there exists a trace preserving normal faithful Jordan conditional expectation $Q \co \cal{M} \to \cal{M}$ on $\cal{N}$.
\end{prop}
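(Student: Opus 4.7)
The plan is to reduce to the real (JBW-algebra) case already announced in the text and then complexify. Concretely, let $A \ov{\mathrm{def}}{=} \cal{M}_\sa$ and $B \ov{\mathrm{def}}{=} \cal{N}_\sa$. By \cite[Corollary 5.1.29 p.~9]{CGRP18}, $A$ is a $\JBW$-algebra, $B$ is a $\JBW$-subalgebra of $A$, and moreover $\cal{M}=A+\i A$, $\cal{N}=B+\i B$. The restriction $\tau_0 \ov{\mathrm{def}}{=} \tau|A$ is a normal faithful tracial state on $A$. Apply the real version of the result (\cite[Theorem 4.2]{HaS95}, reproved in Section \ref{Existence}) to obtain a trace-preserving normal faithful Jordan conditional expectation $Q_0 \co A \to A$ onto $B$.

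Next, extend $Q_0$ to a $\C$-linear map $Q \co \cal{M} \to \cal{M}$ by setting
\[
Q(x+\i y) \ov{\mathrm{def}}{=} Q_0(x)+\i Q_0(y), \quad x,y \in A.
\]
One then checks the required properties one by one. The range is $Q_0(A)+\i Q_0(A)=B+\i B=\cal{N}$. Unitality is immediate, since $Q(1)=Q_0(1)=1$. For positivity and faithfulness, note that by the definition of positivity in a $\JB^*$-algebra \cite[p.~9]{CGRP18}, an element of $\cal{M}_+$ is automatically selfadjoint and coincides with a positive element of $A$; thus $Q$ agrees with $Q_0$ on $\cal{M}_+$, and positivity and faithfulness pass from $Q_0$ to $Q$. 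Normality transfers from $A$ to $\cal{M}$ via the decomposition $\cal{M}=A+\i A$, using that weak*-convergence in $\cal{M}$ is equivalent to weak*-convergence of the selfadjoint and skew-adjoint parts in $A$. Trace preservation is a direct linearity computation: $\tau(Q(x+\i y))=\tau(Q_0(x))+\i\tau(Q_0(y))=\tau(x)+\i\tau(y)=\tau(x+\i y)$, using \eqref{norm-VU}-style identification between $\tau$ and its real restriction.

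The only point requiring a little care is the modular identity \eqref{Def-cond-exp-JBstar}. Observe that on a $\JBW^*$-algebra the Jordan product $\circ$ is $\C$-bilinear, and $Q$ is $\C$-linear by construction. Consequently, the two maps
\[
(x,y) \longmapsto Q(x \circ Q(y)) \quad \text{and} \quad (x,y) \longmapsto Q(x) \circ Q(y)
\]
are both $\C$-bilinear from $\cal{M} \times \cal{M}$ to $\cal{M}$. By the real modular identity for $Q_0$ applied on $B$ (i.e.~$Q_0(a \circ Q_0(b))=Q_0(a) \circ Q_0(b)$ for $a,b \in A$), these two $\C$-bilinear maps coincide on $A \times A$, hence on all of $\cal{M} \times \cal{M}$ by bilinear extension. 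This yields \eqref{Def-cond-exp-JBstar} and concludes the construction.

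The main obstacle is therefore not in this proposition itself — which is a routine complexification — but in the real version of the statement, whose elementary proof is deferred to Section \ref{Existence}. The present proof plan is essentially a bookkeeping argument verifying that $\C$-linear extension preserves each of the structural properties (positivity, faithfulness, normality, trace preservation, modularity), with the $\C$-bilinearity of the Jordan product being the key ingredient that makes the modular identity pass from the real to the complex setting.
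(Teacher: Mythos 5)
Your proposal is correct and follows exactly the paper's route: the paper proves the real ($\JBW$-algebra) version in the appendix (Proposition \ref{Prop-Jordan-conditional-expectation-existence}, cf.\ also \cite[Theorem 4.2]{HaS95}) and then simply states ``by complexification, we obtain the following result,'' which is precisely the bookkeeping you carry out. Your explicit verification of the modular identity via $\C$-bilinearity, and of positivity via $\cal{M}_+\subset A$, fills in the details the paper leaves to the reader.
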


\paragraph{Interpolation} 
We start by recalling some background on complex interpolation theory. We refer to the books \cite{BeL76},  \cite{KPS82} and \cite{Lun18} for more information. Let $X_0,X_1$ be two Banach spaces which embed into a topological vector space space $\tilde{X}$. We say that $(X_0,X_1)$ is an interpolation couple. Then the sum $X_0 + X_1 \ov{\mathrm{def}}{=} \{x \in \tilde{X} : x=x_1+x_2 \text{ for some } x_1 \in X_1,x_2 \in X_2\}$ is well-defined and equipped with the norm
$$
\norm{x}_{X_0+X_1}
\ov{\mathrm{def}}{=} \inf_{x=x_0+x_1} \big(\norm{x_0}_{X_0}+\norm{x_1}_{X_1}\big).
$$
The intersection $X_0 \cap X_1$ is equipped with the norm 
$$
\norm{x}_{X_0 \cap X_1}
\ov{\mathrm{def}}{=} \max\{\norm{x}_{X_0},\norm{x}_{X_1}\}.
$$

Consider the closed strip $\ovl{S} \ov{\mathrm{def}}{=} \{z \in \mathbb{C} : 0 \leq \Re z \leq 1\}$. Let us denote by $\mathscr{F}(X_0,X_1)$ the family of bounded continuous functions $f \co \ovl{S} \to X_0+X_1$, holomorphic on the open strip $S \ov{\mathrm{def}}{=} \{z \in \mathbb{C} : 0< \Re z <1\}$ inducing continuous functions $\R\to X_0$, $t \mapsto f(\i t)$ and $\R \to X_1$, $t \mapsto f(1+\i t)$ which tend to 0 when $|t|$ goes to $\infty$. For any $f \in \mathscr{F}(X_0,X_1)$, we set
\begin{equation}
\label{norm-funct-inter}
\norm{f}_{\mathscr{F}(X_0,X_1)}
\ov{\mathrm{def}}{=} \max \left\{\sup_{t \in \R} \norm{f(\i t)}_{X_0}, \sup_{t \in \R} \norm{f(1+\i t)}_{X_1}\right\}.
\end{equation}
If $0 \leq \theta \leq 1$, we define the subspace $
(X_0,X_1)_\theta
\ov{\mathrm{def}}{=} \big\{f(\theta) : f \in \mathscr{F}(X_0,X_1)\big\}$ of the Banach space $X_0+X_1$. For any $x \in (X_0,X_1)_\theta$, we let
$$
\norm{x}_{(X_0,X_1)_\theta}
\ov{\mathrm{def}}{=} \inf \big\{\norm{f}_{\mathscr{F}(X_0,X_1)} : f \in \mathscr{F}(X_0,X_1), f(\theta)=x\big\}.
$$
Then by \cite[Theorem 4.1.2]{BeL76} $(X_0,X_1)_\theta$ equipped with this norm is a Banach space. 

In the particular case $X_0 \subset X_1$, the sum $X_0+X_1$ is isometric to $X_1$ and the intersection $X_0 \cap X_1$ is isometric to the space $X_0$. By \cite[Proposition 2.4 p.~50]{Lun18}, we have the contractive inclusions
\begin{equation}
\label{contract-inclusions}
X_0 \subset (X_0,X_1)_\theta \subset X_1.
\end{equation}
Moreover, by \cite[Theorem 4.2.2 p.~91]{BeL76} the subspace $X_0$ is dense in the Banach space $(X_0,X_1)_\theta$.

Recall the classical following result \cite[Theorem 4.4.1 p.~96]{BeL76} on bilinear interpolation.

\begin{thm}
\label{Th-bilinear-interpolation}
Let $(X_0,X_1)$, $(Y_0,Y_1)$ and $(Z_0,Z_1)$ be an interpolation couple of Banach spaces. Assume that $T \co X_0 \cap X_1\times Y_0 \cap Y_1 \to Z_0 \cap Z_1$ is a bilinear map satisfying  
$$
\norm{T(x,y)}_{Z_0} \leq M_0 \norm{x}_{X_0} \norm{y}_{Y_0}
$$
and
$$
\norm{T(x,y)}_{Z_1} \leq M_1 \norm{x}_{X_1} \norm{y}_{Y_1}
$$
for any $x \in X_0 \cap X_1$ and any $y \in Y_0 \cap Y_1$. Then $T$ induces a bounded bilinear map from $(X_0,X_1)_\theta \times (Y_0,Y_1)_\theta$ into the space $(Z_0,Z_1)_\theta$ with norm at most $M_0^{1-\theta} M_1^\theta$.
\end{thm}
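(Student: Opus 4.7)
The plan is to derive the bound from Hadamard's three-lines principle by choosing analytic extensions of $x$ and $y$ that are well balanced on the two boundary lines, then combining them through $T$ with an exponential scaling that redistributes $M_0$ and $M_1$.

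Fix $x \in X_0 \cap X_1$ and $y \in Y_0 \cap Y_1$; the general case will follow by density at the end. Given $\epsi > 0$, I would first choose $f \in \mathscr{F}(X_0,X_1)$ with $f(\theta)=x$ such that both $\sup_{t \in \R}\norm{f(\i t)}_{X_0}$ and $\sup_{t \in \R}\norm{f(1+\i t)}_{X_1}$ are bounded by $(1+\epsi)\norm{x}_{(X_0,X_1)_\theta}$. This balancing is achieved by a standard trick: given any admissible $f$ with boundary suprema $a$ and $b$, the function $z \mapsto \lambda^{z-\theta} f(z)$ still takes the value $x$ at $\theta$ and has boundary suprema $\lambda^{-\theta} a$ and $\lambda^{1-\theta} b$, which become equal to $a^{1-\theta} b^\theta \leq \max(a,b)$ for $\lambda = a/b$. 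An analogous choice produces $g \in \mathscr{F}(Y_0,Y_1)$ for $y$. In order for $T(f(z),g(z))$ to be defined on the whole strip, one must additionally require that $f(z) \in X_0 \cap X_1$ and $g(z) \in Y_0 \cap Y_1$ for every $z \in \ovl{S}$; this is harmless because the subspace of $\mathscr{F}(X_0,X_1)$ consisting of finite sums $z \mapsto \sum_k \e^{\alpha_k z^2+\lambda_k z} v_k$ with $v_k \in X_0 \cap X_1$ is dense in $\mathscr{F}(X_0,X_1)$ (a classical Calderón-type density lemma used in the proof of \cite[Theorem 4.2.2]{BeL76}), and similarly for the $Y$ side.

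Then I would set
\begin{equation*}
h(z) \ov{\mathrm{def}}{=} \Big(\frac{M_0}{M_1}\Big)^{z-\theta} T(f(z),g(z)), \quad z \in \ovl{S}.
\end{equation*}
Since $T$ is bilinear and bounded from $X_0 \cap X_1 \times Y_0 \cap Y_1$ into $Z_0 \cap Z_1$ (with constant at most $\max(M_0,M_1)$), and since $f$ and $g$ are holomorphic into these intersections, $h$ is continuous and bounded on $\ovl{S}$ and holomorphic on $S$, with values in $Z_0+Z_1$. Using the hypotheses together with the scaling modulus $(M_0/M_1)^{-\theta}$ on the line $\Re z=0$ and $(M_0/M_1)^{1-\theta}$ on the line $\Re z=1$, a direct computation yields
\begin{align*}
\norm{h(\i t)}_{Z_0}
&\leq (M_0/M_1)^{-\theta} M_0 \norm{f(\i t)}_{X_0} \norm{g(\i t)}_{Y_0}
\leq M_0^{1-\theta} M_1^{\theta} (1+\epsi)^2 \norm{x}_{(X_0,X_1)_\theta}\norm{y}_{(Y_0,Y_1)_\theta}, \\
\norm{h(1+\i t)}_{Z_1}
&\leq (M_0/M_1)^{1-\theta} M_1 \norm{f(1+\i t)}_{X_1} \norm{g(1+\i t)}_{Y_1}
\leq M_0^{1-\theta} M_1^{\theta} (1+\epsi)^2 \norm{x}_{(X_0,X_1)_\theta}\norm{y}_{(Y_0,Y_1)_\theta}.
\end{align*}
Decay as $|t| \to \infty$ on each boundary line follows because $\norm{f(\i t)}_{X_0}$ and $\norm{g(1+\i t)}_{Y_1}$ tend to $0$ while their counterparts remain bounded. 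Hence $h \in \mathscr{F}(Z_0,Z_1)$ with $\norm{h}_{\mathscr{F}(Z_0,Z_1)} \leq M_0^{1-\theta} M_1^{\theta}(1+\epsi)^2 \norm{x}_{(X_0,X_1)_\theta}\norm{y}_{(Y_0,Y_1)_\theta}$. Since $h(\theta)=T(x,y)$, the definition of the interpolation norm gives the required bound after letting $\epsi \to 0$, and then extending to arbitrary $x \in (X_0,X_1)_\theta$ and $y \in (Y_0,Y_1)_\theta$ by density of the intersections in the interpolation spaces.

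The main obstacle is precisely the restriction of $f$ and $g$ to functions with values in the intersections: without this, the composition $T(f(\cdot),g(\cdot))$ is not even defined, and one must invoke the Calderón-type density of such simple analytic functions in $\mathscr{F}$. Once this technical point is granted, the rest of the proof is a transparent application of the three-lines principle to the rescaled combination $h$.
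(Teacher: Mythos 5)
The paper offers no proof of this statement: it is quoted as the classical bilinear interpolation theorem, with a citation to \cite[Theorem 4.4.1 p.~96]{BeL76}. Your argument is precisely the standard proof from that source --- rescale by $(M_0/M_1)^{z-\theta}$, estimate the two boundary lines, and make $T(f(z),g(z))$ well defined by working in the Calder\'on subclass $\mathscr{F}_0$ of intersection-valued exponential sums --- so it is correct and matches the intended argument; the only point to phrase a bit more carefully is that mere density of $\mathscr{F}_0$ in $\mathscr{F}$ must be upgraded to the (equally classical) fact that for $x\in X_0\cap X_1$ the infimum defining $\norm{x}_{(X_0,X_1)_\theta}$ is unchanged when restricted to those $f\in\mathscr{F}_0$ with $f(\theta)=x$ exactly, since an approximant $g\in\mathscr{F}_0$ of $f$ only satisfies $g(\theta)\approx x$ in the interpolation norm and the naive correction term is not controlled in $\mathscr{F}$.
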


We will use the following result which is the particular case of \cite[Theorem 3.3]{Wer18} when $X_0 \subset X_1$. See the papers \cite{CoS98}, \cite{HaP89}, \cite{LiP64}, \cite{Wat95} and \cite{Wat00}   
for variants of this result.

\begin{thm}
\label{Th-Werner}
Let $(X_0,X_1)$ be a couple of Banach spaces with $X_0 \subset X_1$ such that $X_1^* = X_0$ (with duality bracket $\la \cdot, \cdot \ra_{X_0,X_1}$) satisfying the following conditions.
\begin{enumerate}
	\item $X_0$ is dense in $X_1$.
	\item For any $x \in X_0$, we have 
$
\norm{x}_{X_1} 
=\sup \big\{|\la y, x \ra_{X_0,X_1} | : y \in X_0, \norm{y}_{X_0} \leq 1 \big\}
$.
	\item For any linear functional $\psi$ on $X_0$ which is continuous with respect to the norms of $X_0$ and $X_1$ there exists some $z \in X_0$ such that
$$
\psi(x) 
=\la z,x \ra_{X_0,X_1}, \quad x \in X_0.
$$
\item On $X_0$ there exists an involution $*$ such that $
\la x, y \ra_{\cal{H}} 
\ov{\mathrm{def}}{=} \la y, x^*\ra_{X_1,X_0}$ is a scalar product on $X_0$.

\item Let $\cal{H}$ denote the completion of $X_0$ with respect to this scalar product. Assume that $*$ is isometric with respect to the norms of $X_0$, $X_1$ and $\cal{H}$.
\end{enumerate}
Then $(X_0,X_1)_{\frac{1}{2}}
=\cal{H}$ (the norms on $X_0$ coincide). Moreover, for any $0 < \theta < 1$ we have 
$$
(X_0,X_1)_\theta^*
=(X_0,X_1)_{1-\theta}.
$$
\end{thm}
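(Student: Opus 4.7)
Proof proposal. The strategy is to prove the lower interpolation bound $\norm{\cdot}_\cal{H}\leq\norm{\cdot}_{(X_0,X_1)_{1/2}}$ directly by a three-lines argument, establish the duality $(X_0,X_1)_\theta^*=(X_0,X_1)_{1-\theta}$ by an abstract Calder\'on-type construction in which condition~3 serves as a non-reflexive substitute for the usual hypotheses, and finally extract the matching upper bound at $\theta=1/2$ from self-duality.

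\textbf{Preliminaries.} First I would verify the contractive chain $X_0\subset\cal{H}\subset X_1$. For $x\in X_0$, Cauchy--Schwarz in the duality pairing, combined with the $X_0$-isometry of $*$, gives $\norm{x}_\cal{H}^2=\la x,x^*\ra_{X_1,X_0}\leq\norm{x}_{X_1}\norm{x}_{X_0}$, and together with the continuous inclusion $X_0\hookrightarrow X_1$ (extracted from condition~2) this yields $X_0\hookrightarrow\cal{H}$. The identity $\la y,x\ra_{X_0,X_1}=\la y^*,x\ra_\cal{H}$ (valid on $X_0$ by the defining formula for $\la\cdot,\cdot\ra_\cal{H}$) combined with condition~2 and Cauchy--Schwarz in $\cal{H}$ yields $\cal{H}\hookrightarrow X_1$. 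Condition~5 then allows us to extend $*$ by density to an isometric conjugate-linear involution on $X_1$.

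\textbf{Lower bound.} Given $f\in\mathscr{F}(X_0,X_1)$ with $f(1/2)=x$, set $f^*(z)\ov{\mathrm{def}}{=} f(\bar z)^*$; since $*$ is conjugate-linear and isometric on both endpoints, $f^*\in\mathscr{F}(X_0,X_1)$ with the same norm. Define the scalar analytic function
\[
F(z)\ov{\mathrm{def}}{=}\la f(z),f^*(1-z)\ra,
\]
interpreted by the natural $X_0$--$X_1$ duality. On $\Re z=0$, $f(it)\in X_0$ pairs with $f^*(1-it)=f(1+it)^*\in X_1$, giving $|F(it)|\leq\norm{f(it)}_{X_0}\norm{f(1+it)}_{X_1}\leq\norm{f}_\mathscr{F}^2$; on $\Re z=1$ the roles swap and the same bound holds. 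At the midpoint $F(1/2)=\la x,x^*\ra_{X_1,X_0}=\norm{x}_\cal{H}^2$, so the three-lines lemma forces $\norm{x}_\cal{H}\leq\norm{f}_\mathscr{F}$; taking the infimum gives $\norm{x}_\cal{H}\leq\norm{x}_{(X_0,X_1)_{1/2}}$.

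\textbf{Duality and upper bound.} The inclusion $(X_0,X_1)_{1-\theta}\hookrightarrow(X_0,X_1)_\theta^*$ is contractive by applying Theorem~\ref{Th-bilinear-interpolation} to the duality pairing (bounded as a bilinear map $X_0\times X_1\to\C$ and $X_1\times X_0\to\C$). For the reverse, given $\psi\in(X_0,X_1)_\theta^*$, its restriction to $X_0$ is continuous for both $\norm{\cdot}_{X_0}$ and $\norm{\cdot}_{X_1}$ (via $X_0\hookrightarrow(X_0,X_1)_\theta\hookrightarrow X_1$); condition~3 supplies $z\in X_0$ with $\psi(x)=\la z,x\ra_{X_0,X_1}$, and Calder\'on's classical extremal-function construction produces an admissible $g\in\mathscr{F}(X_0,X_1)$ with $g(1-\theta)=z$ and $\norm{g}_\mathscr{F}\leq\norm{\psi}$, thereby realizing $z\in(X_0,X_1)_{1-\theta}$ with the correct norm. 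Specializing to $\theta=1/2$ yields $(X_0,X_1)_{1/2}^*=(X_0,X_1)_{1/2}$ with the pairing inherited from $\cal{H}$; dualizing the contractive dense inclusion $(X_0,X_1)_{1/2}\hookrightarrow\cal{H}$ produces the reverse contractive dense inclusion $\cal{H}\hookrightarrow(X_0,X_1)_{1/2}$, so both embeddings are isometries and $(X_0,X_1)_{1/2}=\cal{H}$.

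\textbf{Main obstacle.} The reverse duality inclusion in the non-reflexive setting is the technical heart of the argument: the standard Bergh--L\"ofstr\"om proof of Calder\'on's duality theorem requires reflexivity of an endpoint, which is unavailable here, and condition~3 is the precise analytic substitute ensuring that the functional obtained from $\psi$ genuinely comes from $X_0$ rather than merely from a bidual. Carrying out Calder\'on's extremal-function construction so that it produces an admissible $g$ landing in $X_0$ at the endpoints (with the correct norm), rather than in the larger bidual, is the subtle step and corresponds to the technical content of \cite{Wer18}.
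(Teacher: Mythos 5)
The paper does not actually prove Theorem \ref{Th-Werner}: it is quoted from \cite[Theorem 3.3]{Wer18} (specialized to $X_0\subset X_1$), so there is no in-paper argument to compare yours against. Judged on its own terms, your three-lines argument for $\norm{x}_{\cal{H}}\leq\norm{x}_{(X_0,X_1)_{1/2}}$ and the contractive inclusion $(X_0,X_1)_{1-\theta}\hookrightarrow(X_0,X_1)_\theta^*$ via bilinear interpolation are sound, modulo the standard reduction to functions of the form $\e^{\delta z^2}\sum_k \e^{\lambda_k z}x_k$ with $x_k\in X_0$, which is needed so that $F(z)=\la f(z),f^*(1-z)\ra$ is a well-defined scalar holomorphic function on the open strip (for a general $f\in\mathscr{F}(X_0,X_1)$ both entries a priori lie only in $X_1$, where the pairing is undefined).

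The reverse duality inclusion, however, contains a genuine error rather than a mere gap. You claim that the restriction to $X_0$ of an arbitrary $\psi\in(X_0,X_1)_\theta^*$ is continuous for $\norm{\cdot}_{X_1}$ ``via $X_0\hookrightarrow(X_0,X_1)_\theta\hookrightarrow X_1$''. That chain gives $\norm{x}_{X_1}\lesssim\norm{x}_{(X_0,X_1)_\theta}\lesssim\norm{x}_{X_0}$ on $X_0$, hence continuity of $\psi|_{X_0}$ with respect to the \emph{larger} norm $\norm{\cdot}_{X_0}$ only; continuity with respect to the smaller norm $\norm{\cdot}_{X_1}$ does not follow and is false in general. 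Indeed, if condition 3 applied to every such $\psi$, then every element of $(X_0,X_1)_\theta^*$ would be represented by some $z\in X_0$, i.e.\ $(X_0,X_1)_\theta^*\subset X_0$ --- already false for the admissible couple $X_0=\L^\infty[0,1]$, $X_1=\L^1[0,1]$, where $(X_0,X_1)_{1/2}^*=\L^2\not\subset\L^\infty$. Consequently the subsequent ``Calder\'on extremal-function'' step has no input (and, even for $z\in X_0$, the bound $\norm{z}_{(X_0,X_1)_{1-\theta}}\leq\norm{\psi}$ is precisely the hard assertion, not a known construction one can invoke). Since your upper bound $\norm{x}_{(X_0,X_1)_{1/2}}\leq\norm{x}_{\cal{H}}$ is extracted from the self-duality at $\theta=1/2$, that half of the first assertion is not established either. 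A correct argument must handle functionals that are \emph{not} representable by elements of $X_0$ --- this is where Calder\'on's duality theorem $(X_0,X_1)_\theta^*=(X_0^*,X_1^*)^\theta$, or a reflexivity/reiteration argument through the midpoint Hilbert space, enters --- and that is exactly the technical content of \cite{Wer18} which your proposal defers to but does not supply.
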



We finish with the following result \cite[Lemma 3.1]{Wer18}.

\begin{prop}
\label{Prop-Werner}
Let $(X_0,X_1)$ be an interpolation couple of Banach spaces whose intersection $X_0 \cap X_1$ is dense in $X_0$ and in $X_1$. Let $T \co X_0 \cap X_1 \to X_0 \cap X_1$ be an anti-linear surjective map that is isometric in both norms $\norm{\cdot}_{X_0}$ and $\norm{\cdot}_{X_1}$. Then for any $0 < \theta < 1$, the map $T$ induces an anti-linear isometry on the space $(X_0,X_1)_{\theta}$.
\end{prop}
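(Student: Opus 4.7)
The plan is to extend $T$ from $X_0 \cap X_1$ to the ambient space $X_0 + X_1$, and then transfer the isometric action to $(X_0,X_1)_\theta$ by conjugating the holomorphic test functions $f \in \mathscr{F}(X_0,X_1)$ that witness the interpolation norm. First, I observe that although $T$ is anti-linear over $\C$, it is in particular $\R$-linear and continuous in each norm $\norm{\cdot}_{X_i}$; combined with the hypothesis that $X_0 \cap X_1$ is dense in $X_i$ and the surjectivity of $T$, this gives unique extensions to anti-linear surjective isometries $T_i \co X_i \to X_i$ for $i=0,1$. By uniqueness of continuous extension from the common dense subspace $X_0 \cap X_1$, the maps $T_0$ and $T_1$ coincide on $X_0 \cap X_1$, hence assemble into a single anti-linear map $\widetilde T \co X_0 + X_1 \to X_0 + X_1$ that is continuous for the $\norm{\cdot}_{X_0+X_1}$ norm and restricts to an isometry on each $X_i$.

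Next, given $x \in X_0 \cap X_1$ and $f \in \mathscr{F}(X_0,X_1)$ with $f(\theta)=x$, I set
\[
g(z) \ov{\mathrm{def}}{=} \widetilde T\bigl(f(\bar z)\bigr), \quad z \in \ovl S.
\]
Since $\theta \in (0,1)$ is real, $g(\theta) = \widetilde T(f(\theta)) = T(x)$. Boundedness and continuity on $\ovl S$ with values in $X_0+X_1$ are immediate from those of $f$ and $\widetilde T$. On the left boundary, $f(-\i t) \in X_0$, so $g(\i t) = T_0(f(-\i t)) \in X_0$ with $\norm{g(\i t)}_{X_0} = \norm{f(-\i t)}_{X_0}$; similarly $g(1+\i t) = T_1(f(1-\i t)) \in X_1$ with the analogous norm identity, and the $\to 0$ condition at infinity transfers. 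The key point is holomorphicity on $S$: the map $z \mapsto f(\bar z)$ is anti-holomorphic (a direct Cauchy–Riemann computation gives $\partial_z (f(\bar z)) = 0$), and post-composing with a continuous anti-linear map converts anti-holomorphicity into holomorphicity, since for the $\R$-linear $\widetilde T$ one has $\partial_{\bar z}(\widetilde T \circ f(\bar z)) = \widetilde T\bigl(\tfrac{1}{2}(\partial_x - \i \partial_y) f(\bar z)\bigr) = \widetilde T(\partial_z f(\bar z)) = 0$, where the middle equality uses $\widetilde T(\i v) = -\i \widetilde T(v)$. Thus $g \in \mathscr{F}(X_0,X_1)$ with $\norm{g}_{\mathscr{F}(X_0,X_1)} = \norm{f}_{\mathscr{F}(X_0,X_1)}$.

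Taking the infimum of $\norm{f}_{\mathscr{F}(X_0,X_1)}$ over admissible $f$ therefore yields $\norm{T(x)}_{(X_0,X_1)_\theta} \leq \norm{x}_{(X_0,X_1)_\theta}$ for every $x \in X_0 \cap X_1$. Because $T^{-1} \co X_0 \cap X_1 \to X_0 \cap X_1$ satisfies exactly the same hypotheses (anti-linear, surjective, isometric in both norms), the reverse inequality follows by applying the same construction to $T^{-1}$, giving equality on the dense subspace $X_0 \cap X_1$. Finally, since $X_0 \cap X_1$ is dense in $(X_0,X_1)_\theta$ by \cite[Theorem 4.2.2]{BeL76}, the map $T$ extends uniquely by continuity to an anti-linear isometry on all of $(X_0,X_1)_\theta$, as required. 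The only genuinely subtle step is the holomorphicity of $g$, which hinges precisely on the interplay between the complex conjugation in the argument and the anti-linearity of $\widetilde T$; once this is established, everything else is a routine transfer of norm estimates.
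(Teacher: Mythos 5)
Your proof is correct: the reflection trick $g(z)=\widetilde T(f(\bar z))$, with the Cauchy--Riemann computation showing that post-composing an anti-holomorphic function with a continuous anti-linear map yields a holomorphic one, is exactly the standard argument, and the two-sided estimate via $T^{-1}$ plus density of $X_0\cap X_1$ in $(X_0,X_1)_\theta$ closes it. The paper itself gives no proof of this statement and simply cites \cite[Lemma 3.1]{Wer18}, whose argument is essentially the one you wrote.
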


\paragraph{Projections} We will use the next well-known result \cite[Theorem 1 p.~118]{Tri95}, which will help us to determine the range of some projections constructed by interpolation. 

\begin{lemma}
\label{Lemma-interpolation}
Let $(X_0,X_1)$ be an interpolation couple and let $C$ be a contractively complemented subspace of $X_0+X_1$. We assume that the corresponding contractive projection $P \co X_0+X_1 \to X_0+X_1$ satisfies $P(X_i) \subset X_i$ and that the restriction $P \co X_i \to X_i$ is contractive for $i=0,1$. Then $(X_0 \cap C,X_1 \cap C)$ is an interpolation couple and the canonical inclusion $J \co C \to X_0+X_1$ induces an isometric isomorphism $\tilde{J}$ from $(X_0 \cap C,X_1 \cap C)_\theta$ onto the subspace $P((X_0,X_1)_\theta)=(X_0,X_1)_\theta \cap C$ of the space $(X_0,X_1)_\theta$. 
\end{lemma}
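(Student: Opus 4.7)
My plan is to first promote $P$ to a contraction on $(X_0,X_1)_\theta$ via the standard composition-with-$P$ argument inside $\mathscr{F}$, identify its range with $(X_0,X_1)_\theta \cap C$, and then apply the same composition trick to an arbitrary $f \in \mathscr{F}(X_0,X_1)$ in order to realise a given element of $(X_0,X_1)_\theta \cap C$ as an interpolate of $(X_0 \cap C, X_1 \cap C)$ with no increase of norm.

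First, since $X_0 \cap C$ and $X_1 \cap C$ both embed into $X_0+X_1$ with the norms inherited from $X_0$ and $X_1$, the couple $(X_0\cap C,X_1\cap C)$ is an interpolation couple. If $f \in \mathscr{F}(X_0,X_1)$, then $P \circ f$ is bounded and continuous on $\ovl{S}$, holomorphic on $S$, and satisfies $\norm{P\circ f(\i t)}_{X_0} \leq \norm{f(\i t)}_{X_0}$ and $\norm{P\circ f(1+\i t)}_{X_1} \leq \norm{f(1+\i t)}_{X_1}$ by contractivity of $P$ on each $X_i$, with the vanishing at $|t| \to \infty$ carried over by boundedness of $P|_{X_i}$. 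Hence $P$ induces a contraction on $(X_0,X_1)_\theta$, still denoted $P$. The identity $P((X_0,X_1)_\theta) = (X_0,X_1)_\theta \cap C$ then follows by a double inclusion: the left-hand side is contained in $(X_0,X_1)_\theta$ by this boundedness and in $C$ by the projection property, while any $x \in (X_0,X_1)_\theta \cap C$ satisfies $P(x)=x$.

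Next, any $f \in \mathscr{F}(X_0 \cap C, X_1 \cap C)$ takes values in $(X_0\cap C)+(X_1\cap C) \subset C$ and, since the inclusions $X_i \cap C \hookrightarrow X_i$ are isometric, sits tautologically in $\mathscr{F}(X_0,X_1)$ with the same norm; this shows that $\tilde{J}$ is a well-defined contraction into $(X_0,X_1)_\theta \cap C$. For the converse inequality, fix $x \in (X_0,X_1)_\theta \cap C$ and any $f \in \mathscr{F}(X_0,X_1)$ with $f(\theta)=x$, and set $g \ov{\mathrm{def}}{=} P \circ f$. Since $P(X_i) \subset X_i \cap C$, we have $g(z) = P(f_0(z)) + P(f_1(z)) \in (X_0 \cap C) + (X_1 \cap C)$ for any decomposition $f(z) = f_0(z) + f_1(z)$ with $f_i(z)\in X_i$; boundedness of $P$ as a map $X_0+X_1 \to (X_0\cap C)+(X_1\cap C)$ yields continuity on $\ovl S$ and holomorphy on $S$ into this sum, while the arguments of the previous paragraph apply verbatim on the boundaries. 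Thus $g \in \mathscr{F}(X_0 \cap C, X_1 \cap C)$ with $g(\theta)=P(x)=x$ and $\norm{g}_\mathscr{F} \leq \norm{f}_\mathscr{F}$; infimising over $f$ gives $\norm{x}_{(X_0 \cap C, X_1 \cap C)_\theta} \leq \norm{x}_{(X_0,X_1)_\theta}$, and the isometric identification follows.

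The only mildly technical point is verifying that $g = P\circ f$ enjoys continuity, holomorphy and the boundary regularity in the correct ``intersected'' norms $\norm{\cdot}_{(X_0\cap C)+(X_1\cap C)}$ and $\norm{\cdot}_{X_i}$ on $X_i\cap C$; this reduces to the contractivity of $P \co X_0+X_1 \to (X_0\cap C)+(X_1\cap C)$ (a triangle inequality over $\epsi$-nearly optimal decompositions using $P(X_i) \subset X_i\cap C$) and to the boundedness of $P|_{X_i}$ on $X_i$. Once these routine checks are in place the two norm estimates fit together to give the isometric isomorphism $\tilde J$ onto $(X_0,X_1)_\theta \cap C$.
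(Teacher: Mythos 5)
The paper does not prove this lemma; it is quoted from Triebel \cite[Theorem 1 p.~118]{Tri95}. Your argument is the standard proof of that result --- interpolate $P$ by composing with functions in $\mathscr{F}(X_0,X_1)$, identify the range by the projection property, and get the reverse norm inequality by pushing an almost-optimal $f$ forward to $P\circ f\in\mathscr{F}(X_0\cap C,X_1\cap C)$ --- and it is correct, including the genuinely necessary check that $P$ maps $X_0+X_1$ contractively onto $(X_0\cap C)+(X_1\cap C)$ so that the sub-couple's sum norm agrees with the ambient one on $C$.
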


The following is \cite[Theorem 3.2.6 p.~297]{Meg98} combined with \cite[5.10 p.~148]{FHHMPZ01}.

\begin{prop}
\label{Prop-Fabian}
Let $X$ be a Banach space and consider a bounded map $P \co X \to X$. Then $P$ is a projection if and only if $P^*\co X^* \to X^*$ is a projection. In this case, the space $P(X)^*$ is isomorphic to the space $P^*(X^*)$.
\end{prop}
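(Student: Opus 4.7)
The plan is to prove the two assertions separately, both reducing to standard adjoint gymnastics together with the direct sum decomposition induced by a projection.

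For the first assertion, I would observe that the identity $(AB)^* = B^* A^*$ for bounded operators gives $(P^*)^2 = (P \circ P)^* = (P^2)^*$. Hence $P^2 = P$ forces $(P^*)^2 = P^*$. For the converse, note that if $P^* \co X^* \to X^*$ is a projection, then applying the same argument to $P^*$ yields that $P^{**} \co X^{**} \to X^{**}$ is a projection. Under the canonical isometric embedding $X \hookrightarrow X^{**}$, $P^{**}$ extends $P$, so $P^2 = P$ follows by restriction.

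For the isomorphism $P(X)^* \cong P^*(X^*)$, write $Y \ov{\mathrm{def}}{=} P(X)$ and $Z \ov{\mathrm{def}}{=} \ker P$, which are closed subspaces with $X = Y \oplus Z$ (topological direct sum, since $P$ is a bounded projection and $Y = \ker(\Id - P)$). The key computation is to identify $P^*(X^*)$ with the annihilator $Z^\perp \ov{\mathrm{def}}{=} \{f \in X^* : f|_Z = 0\}$: for any $f \in X^*$ and $x = y + z$ with $y \in Y$, $z \in Z$, one has $(P^*f)(x) = f(Px) = f(y)$, which shows simultaneously that $P^*f \in Z^\perp$ and that every $g \in Z^\perp$ satisfies $P^*g = g$. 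Hence $P^*(X^*) = Z^\perp$.

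Now consider the restriction map $R \co Z^\perp \to Y^*$, $g \mapsto g|_Y$. Every $g \in Z^\perp$ is determined by its values on $Y$ (since it is zero on $Z$ and $X = Y \oplus Z$), so $R$ is injective. Conversely, given any $\psi \in Y^*$, the functional $g \co x = y+z \mapsto \psi(y)$ is bounded on $X$ (because the projection $P$ is bounded), lies in $Z^\perp$, and satisfies $R(g) = \psi$; so $R$ is surjective. Both $R$ and its inverse are continuous (the continuity of $R$ is obvious, and the continuity of $R^{-1}$ follows from the boundedness of the extension via $P$, with $\|R^{-1}\| \le \|P\|$). This gives the desired topological isomorphism $P^*(X^*) \cong P(X)^*$. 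The only potential obstacle is to be a bit careful about the topological (rather than isometric) nature of the identification $P^*(X^*) \cong Y^*$, but nothing more delicate is required.
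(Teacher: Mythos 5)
Your proof is correct. The paper gives no argument of its own for this proposition --- it simply cites \cite[Theorem 3.2.6]{Meg98} and \cite[5.10]{FHHMPZ01} --- and your reasoning (the identity $(P^*)^2=(P^2)^*$, passage to $P^{**}$ for the converse, the identification $P^*(X^*)=(\ker P)^\perp$, and the restriction isomorphism onto $P(X)^*$ with norm bounds $\norm{R}\leq 1$ and $\norm{R^{-1}}\leq \norm{P}$) is exactly the standard argument underlying those references, so nothing further is needed.
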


\paragraph{Complex analysis} We recall the Hadamard three-lines theorem which we will use. We refer to \cite[p.~33]{ReS72} for a proof.

\begin{prop}
\label{prop-three-lined}
Let $f \co \ovl{S} \to \mathbb{C}$ be a bounded and continuous function, holomorphic on the open strip $S$. For any $\theta \in [0,1]$ we let $M_\theta \ov{\mathrm{def}}{=} \sup_{y \in \R} |f(\theta+\i y)|$. Then
$$
M_\theta 
\leq M_0^{1-\theta} M_1^\theta, \quad \theta \in [0,1].
$$
\end{prop}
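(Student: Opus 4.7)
The plan is to execute the classical Phragmén--Lindelöf / maximum modulus argument on a rescaled version of $f$. First I would reduce to the case where $M_0,M_1>0$: otherwise replace them by $M_0+\delta$ and $M_1+\delta$ and let $\delta\to 0^+$ at the end, so it suffices to exhibit the bound $|f(\theta+\i y)|\leq (M_0+\delta)^{1-\theta}(M_1+\delta)^\theta$.

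Next I would introduce the auxiliary function
$$
g(z) \ov{\mathrm{def}}{=} \frac{f(z)}{M_0^{1-z}M_1^{z}},
$$
where $M_0^{1-z}M_1^z$ is defined via the principal branch of the logarithm and is a nowhere-vanishing entire function. Then $g$ is bounded and continuous on $\ovl{S}$, holomorphic on $S$, and a direct computation of $|M_0^{1-z}M_1^{z}|$ on the two boundary lines gives $|g(\i y)|\leq 1$ and $|g(1+\i y)|\leq 1$ for all $y\in\R$. The goal reduces to showing $|g(z)|\leq 1$ on the whole strip.

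The obstacle is that $S$ is unbounded, so the maximum modulus principle does not apply directly; this is exactly where one needs a Phragmén--Lindelöf type device. For $\epsilon>0$ I would set
$$
g_\epsilon(z)
\ov{\mathrm{def}}{=} g(z)\, \e^{\epsilon(z^2-1)}.
$$
Writing $z=x+\i y$ with $x\in[0,1]$, one has $|\e^{\epsilon(z^2-1)}|=\e^{\epsilon(x^2-y^2-1)}\leq 1$, so $|g_\epsilon|\leq 1$ on the two boundary lines as well. Since $f$ (hence $g$) is bounded on $\ovl{S}$, the factor $\e^{-\epsilon y^2}$ forces $|g_\epsilon(x+\i y)|\to 0$ as $|y|\to\infty$, uniformly in $x\in[0,1]$.

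Finally, I would fix a large $R>0$ so that $|g_\epsilon|\leq 1$ on the horizontal segments $\Im z=\pm R$, and apply the ordinary maximum modulus principle to the bounded rectangle $\{x+\i y:0\leq x\leq 1,\,|y|\leq R\}$ to conclude $|g_\epsilon(z)|\leq 1$ there. Enlarging $R$ gives $|g_\epsilon(z)|\leq 1$ throughout $\ovl{S}$. Letting $\epsilon\to 0^+$ yields $|g(z)|\leq 1$ on $\ovl{S}$, and unwinding the definition gives $|f(\theta+\i y)|\leq M_0^{1-\theta}M_1^\theta$, whence $M_\theta\leq M_0^{1-\theta}M_1^\theta$, as desired.
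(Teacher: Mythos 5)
Your argument is correct and complete: the normalization by $M_0^{1-z}M_1^{z}$, the Phragm\'en--Lindel\"of factor $\e^{\epsilon(z^2-1)}$ to kill the contribution at infinity, the maximum modulus principle on large rectangles, and the two limits $\epsilon\to 0^+$ and $\delta\to 0^+$ are all in order. The paper does not prove this proposition but only cites Reed--Simon for it, and your proof is precisely the classical argument given there, so there is nothing to compare beyond noting the match.
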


\section{Nonassociative $\L^p$-spaces associated with a $\JBW^*$-algebra}
\label{Sec-Lp-state-JBWstar}

Let $\cal{M}$ be a $\JBW^*$-algebra with product $(x,y) \mapsto x \circ y$. We start by constructing an embedding of $\cal{M}$ in its predual $\cal{M}_*$ with the help of a normal faithful state $\varphi$ in order to have a compatible couple $(\cal{M},\cal{M}_*)$ in the sense of the complex interpolation theory.

Let $\varphi$ be an element of the predual $\cal{M}_*$. For any $x \in \cal{M}$, we define the functional $\varphi_x \co \cal{M} \to \mathbb{C}$ by
\begin{equation}
\label{Translate-state}
\varphi_x(y)
\ov{\mathrm{def}}{=} \varphi(x \circ y), \quad y \in \cal{M}.
\end{equation}

\begin{lemma}
\label{Lemma-contractivity}
The map $\varphi_x \co \cal{M} \to \C$ is weak* continuous and satisfies $\norm{\varphi_x}\leq \norm{\varphi}\norm{x}_{\cal{M}}$.
\end{lemma}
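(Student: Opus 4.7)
The plan is to reduce both claims to a single property of the left Jordan-multiplication operator
$L_x \colon \cal{M} \to \cal{M}$, $y \mapsto x \circ y$, namely its weak* continuity and contractivity.

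For the norm estimate, I would simply use the factorisation $\varphi_x = \varphi \circ L_x$. Applying the submultiplicativity of the Jordan product from \eqref{def-JBstar}, namely $\norm{x \circ y}_{\cal{M}} \leq \norm{x}_{\cal{M}} \norm{y}_{\cal{M}}$, we obtain
$$
|\varphi_x(y)|
= |\varphi(x \circ y)|
\leq \norm{\varphi}_{\cal{M}_*} \norm{x \circ y}_{\cal{M}}
\leq \norm{\varphi}_{\cal{M}_*} \norm{x}_{\cal{M}} \norm{y}_{\cal{M}},
$$
so that $\norm{\varphi_x} \leq \norm{\varphi}\norm{x}_{\cal{M}}$, as desired.

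For the weak* continuity of $\varphi_x$, since $\varphi$ belongs to the predual $\cal{M}_*$ and is therefore weak* continuous on $\cal{M}$, it suffices to show that $L_x \colon \cal{M} \to \cal{M}$ is weak* continuous. This is where the core content lies: one needs to invoke the separate weak* continuity of the Jordan product on a $\JBW^*$-algebra. That fact is standard for $\JBW$-algebras (see e.g.~\cite[Corollary 2.16 p.~43]{AlS03} or \cite[4.1.3 p.~112]{HOS84}), and it transfers to $\JBW^*$-algebras through the correspondence between a $\JBW^*$-algebra and the $\JBW$-algebra consisting of its selfadjoint part, recalled just after \eqref{def-JBstar} and in \cite[Corollary 5.1.29 p.~9]{CGRP18}. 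Concretely, writing an arbitrary $x \in \cal{M}$ as $x = x_1 + \i x_2$ with $x_1,x_2$ selfadjoint, the operator $L_x = L_{x_1} + \i L_{x_2}$ is weak* continuous because each $L_{x_j}$ is.

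The main obstacle is therefore not algebraic but \emph{structural}: one must be precise about why Jordan multiplication on a $\JBW^*$-algebra is separately weak* continuous. Once that reference is in place, the argument is otherwise immediate: composing the weak* continuous functional $\varphi \in \cal{M}_*$ with the weak* continuous operator $L_x$ yields a weak* continuous functional on $\cal{M}$, and the bound already established completes the proof.
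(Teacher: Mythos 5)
Your proof is correct and follows essentially the same route as the paper: the norm bound via the contractivity of the Jordan product in \eqref{def-JBstar}, and weak* continuity via the separate weak* continuity of the product composed with $\varphi \in \cal{M}_*$. The only cosmetic difference is that the paper cites the separate weak* continuity directly for $\JBW^*$-algebras (\cite[Corollary 5.1.30 (iii) p.~11]{CGRP18}) rather than transferring it from the selfadjoint part via the decomposition $x = x_1 + \i x_2$.
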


\begin{proof}
Recall that the product of a $\JBW^*$-algebra is separately weak* continuous, see e. g. \cite[Corollary 5.1.30 (iii) p.~11]{CGRP18}. Hence the map $y \mapsto x \circ y$ is weak* continuous. So by composition the functional $\varphi_x$ is also weak* continuous. In a $\JBW^*$-algebra, the product $\circ$ is contractive by definition. Consequently, for any $y \in \cal{M}$, we have
$$
|\varphi_x(y)|
\ov{\eqref{Translate-state}}{=} |\varphi(x \circ y)| \leq \norm{\varphi}\norm{x \circ y}_{\cal{M}} 
\ov{\eqref{def-JBstar}}{\leq} \norm{\varphi} \norm{x}_{\cal{M}} \norm{y}_{\cal{M}}.
$$
\end{proof}

\begin{prop}
\label{Prop-injection}
Let $\cal{M}$ be a $\JBW^*$-algebra equipped with a normal faithful state $\varphi$. The linear map $i \co \cal{M} \to \cal{M}_*$, $x \mapsto \varphi_x$ is contractive and injective. Moreover, its range is dense in the Banach space $\cal{M}_*$.
\end{prop}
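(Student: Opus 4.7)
The proposition has three claims, and I would treat them in the order contractivity, injectivity, density.

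\textbf{Contractivity.} This is immediate from Lemma \ref{Lemma-contractivity}: since $\varphi$ is a state, $\norm{\varphi}_{\cal{M}_*}=\varphi(1)=1$, and the lemma gives $\norm{\varphi_x}_{\cal{M}_*}\leq \norm{x}_{\cal{M}}$. Note also that $\varphi_x \in \cal{M}_*$ (rather than merely $\cal{M}^*$) thanks to the weak* continuity asserted there, so the map $i$ indeed lands in the predual.

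\textbf{Injectivity.} The key observation I would use is that in any $\JB^*$-algebra, the element $x \circ x^*$ is positive: writing $x=a+\i b$ with $a,b$ in the selfadjoint part (using the canonical decomposition recalled in the preliminaries), commutativity of $\circ$ gives $x \circ x^* = a^2+b^2$, which is a sum of squares and hence positive in the underlying $\JBW$-algebra. Now suppose $\varphi_x=0$. Setting $y=x^*$ in \eqref{Translate-state} yields $\varphi(x \circ x^*)=0$. Faithfulness of $\varphi$ forces $x \circ x^*=0$, i.e.\ $a^2+b^2=0$. Then the $\JB$-axiom \eqref{def-JB} gives $\norm{a}_\cal{M}^2=\norm{a^2}_\cal{M}\leq \norm{a^2+b^2}_\cal{M}=0$, hence $a=0$, and symmetrically $b=0$, so $x=0$.

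\textbf{Density of the range.} I would use Hahn--Banach together with the identification $(\cal{M}_*)^*=\cal{M}$. Suppose $y \in \cal{M}$ annihilates $i(\cal{M})$, that is $\varphi_x(y)=\varphi(x \circ y)=0$ for every $x \in \cal{M}$. Specializing to $x=y^*$ gives $\varphi(y^* \circ y)=0$, and the injectivity argument applied to $y$ in place of $x$ yields $y=0$. Hence the annihilator of $i(\cal{M})$ in $\cal{M}$ is trivial, which by weak*/norm duality means that $i(\cal{M})$ is norm-dense in $\cal{M}_*$.

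\textbf{Main obstacle.} The only non-routine point is the positivity of $x \circ x^*$ together with the passage from $x \circ x^*=0$ to $x=0$. Both injectivity and density reduce to this single fact, so the whole proof really hinges on it. Once the sum-of-squares identity $x \circ x^*=a^2+b^2$ and the $\JB$-norm inequality $\norm{a^2}\leq \norm{a^2+b^2}$ are in hand, the rest is formal.
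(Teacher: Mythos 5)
Your proposal is correct and follows the same overall architecture as the paper's proof: contractivity from Lemma \ref{Lemma-contractivity}, injectivity by testing $\varphi_x$ against $x^*$ and invoking faithfulness, and density via the triviality of the annihilator $i(\cal{M})^\perp$ in $(\cal{M}_*)^*=\cal{M}$. The one genuine difference is how the pivotal fact is handled: the paper simply cites the literature for the two assertions that $x \circ x^* \geq 0$ and that $x \circ x^* = 0$ forces $x=0$, whereas you derive both elementarily by writing $x=a+\i b$ with $a,b$ selfadjoint, using commutativity of $\circ$ to get $x \circ x^* = a^2+b^2$, and then applying the $\JB$-axiom $\norm{a^2}\leq\norm{a^2+b^2}$ from \eqref{def-JB}. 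This buys a self-contained argument at the modest cost of implicitly using that a sum of squares is positive in a $\JB$-algebra (equivalently, that the set of squares is a cone closed under addition) --- a standard fact, but one you should at least flag, since it is itself a consequence of the same axiom $\norm{x^2}\leq\norm{x^2+y^2}$. With that caveat the argument is complete, and your closing remark correctly identifies that both injectivity and density reduce to this single implication.
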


\begin{proof}
The first assertion is a consequence of Lemma \ref{Lemma-contractivity}. Let $x \in \cal{M}$. Suppose $\varphi_x=0$. We have
$$
\varphi(x \circ x^*)
\ov{\eqref{Translate-state}}{=} \varphi_x(x^*)
=0.
$$ 
By \cite[(5.1.2) p.~9]{CGRP18} \cite[Lemma 3.1]{HKPP20}, we have $x \circ x^* \geq 0$. By the faithfulness of $\varphi$, we infer that $x \circ x^*=0$. By \cite[Lemma 3.1]{HKPP20} \cite[Lemma 3.4.65 p.~382]{CGRP14}, we deduce $x=0$. We conclude that the map $i$ is injective.

If $y \in \cal{M}$ belongs the annihilator $i(\cal{M})^\perp$ of $i(\cal{M})$, we have 
$$
\varphi(y^* \circ y)
\ov{\eqref{Translate-state}}{=} \varphi_{y^*}(y)
=\la i(y^*), y\ra_{\cal{M}_*,\cal{M}}
=0.
$$ 
Using again the faithfulness of $\varphi$, we obtain $y^* \circ y=0$. We infer that $y=0$. We conclude that $i(\cal{M})^\perp=\{0\}$. By \cite[Proposition 1.10.15 (c) p.~93]{Meg98}, the range of $i$ is dense in the Banach space $\cal{M}_*$. 
\end{proof}
 
Hence $(\cal{M},\cal{M}_*)$ is an interpolation couple of complex Banach spaces. So we can use the complex interpolation method for defining nonassociative $\L^p$-spaces in the spirit of the construction of noncommutative $\L^p$-spaces of \cite{Kos84}. The sum $\cal{M}+\cal{M}_*$ is isometric to the predual $\cal{M}_*$ and the intersection  $\cal{M} \cap \cal{M}_*=i(\cal{M})$ is isometric to $\cal{M}$. 

\begin{defi}
\label{def-nonasso-Lp}
Let $\cal{M}$ be a $\JBW^*$-algebra equipped with a normal faithful state $\varphi$. Suppose that $1 < p < \infty$. We let
\begin{equation}
\label{Def-Lp-state-JBW}
\L^p(\cal{M},\varphi)
\ov{\mathrm{def}}{=} (\cal{M},\cal{M}_*)_{\frac{1}{p}}.
\end{equation}
We say that this space is the nonassociative $\L^p$-space associated with $\cal{M}$ and $\varphi$.
\end{defi}
We shall write simply $\L^p(\cal{M})$ when there is no ambiguity on the functional $\varphi$. We let $\L^\infty(\cal{M}) \ov{\mathrm{def}}{=} \cal{M}$ and $\L^1(\cal{M}) \ov{\mathrm{def}}{=} \cal{M}_*$. By \eqref{contract-inclusions}, we have the contractive inclusions $\cal{M}\subset \L^p(\cal{M}) \subset \cal{M}_*$. Moreover, by \cite[Theorem 4.2.2 p.~91]{BeL76} the subspace $\cal{M}$ is dense in the Banach space $\L^p(\cal{M})$.

\begin{remark} \normalfont
We just need a faithful normal positive functional instead of a normal faithful state.
\end{remark}

Recall that an element $p$ of a $\JBW^*$-algebra is said to be a projection if $p^* = p$ and $p \circ p = p$ and that projections $p$ and $q$ are called orthogonal if $p \circ q = 0$. A $\JBW^*$-algebra is $\sigma$-finite if any orthogonal family of non-zero projections is countable. We easily provide the next folklore characterization of $\sigma$-finiteness.

\begin{prop}
A $\JBW^*$-algebra is $\sigma$-finite if and only if it admits a normal faithful state.
\end{prop}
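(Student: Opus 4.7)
I would prove both implications separately. The reverse direction is a short direct computation; the forward direction relies on Zorn's Lemma together with the theory of support projections in $\JBW^*$-algebras.

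For the easy direction ($\Leftarrow$), assume $\varphi$ is a normal faithful state and let $(p_i)_{i \in I}$ be an orthogonal family of non-zero projections. Because two orthogonal projections $p,q$ in a $\JBW^*$-algebra satisfy $(p+q)^2 = p^2 + 2\,p\circ q + q^2 = p+q$, any finite sum $\sum_{i \in F} p_i$ is again a projection, and is thus dominated by $1$. Applying $\varphi$ yields $\sum_{i \in F} \varphi(p_i) \leq \varphi(1) = 1$. Faithfulness makes each $\varphi(p_i)$ strictly positive, so $I = \bigcup_{n \geq 1} \{i : \varphi(p_i) > 1/n\}$ is a countable union of finite sets, hence countable.

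For the harder direction ($\Rightarrow$), I would exhaust $\cal{M}$ by a maximal orthogonal-support family. By Zorn's Lemma there exists a maximal family $(\psi_i)_{i \in I}$ of normal states whose support projections $s(\psi_i)$ are pairwise orthogonal. The $\sigma$-finiteness hypothesis applied to $(s(\psi_i))_{i \in I}$ forces $I$ to be at most countable, so we may enumerate $I = \mathbb{N}$ and set
$$
\varphi
\ov{\mathrm{def}}{=} \sum_{n=1}^{\infty} 2^{-n}\psi_n.
$$
The series converges in norm in $\cal{M}_*$, so $\varphi$ is a normal state.

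The main obstacle is verifying that $\varphi$ is faithful, and here I would invoke two standard facts from $\JBW^*$-algebra theory (see e.g.~\cite{AlS03}, \cite{HOS84}): (i) every positive $x \in \cal{M}_+$ admits a support projection $s(x)$, and for a normal positive functional $\psi$ the equality $\psi(x)=0$ forces $s(x) \perp s(\psi)$; (ii) every non-zero projection $q \in \cal{M}$ carries a normal state, i.e.~there exists a normal state $\chi$ with $s(\chi) \leq q$ (obtained from the separating property of $\cal{M}_*$, the Jordan decomposition of normal functionals, and a compression by $U_q$). Granting these, suppose $x \in \cal{M}_+$ satisfies $\varphi(x)=0$. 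Then $\psi_n(x)=0$ for every $n$, hence by (i) the projection $s(x)$ is orthogonal to each $s(\psi_n)$. If $s(x)\neq 0$, by (ii) one produces a normal state supported inside $s(x)$, whose support is then orthogonal to all $s(\psi_n)$, contradicting the maximality of the Zorn family. Therefore $s(x)=0$, i.e.~$x=0$, so $\varphi$ is faithful.
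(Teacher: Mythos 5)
Your proof is correct, but it is genuinely more self-contained than the paper's. For the direction ``$\sigma$-finite $\Rightarrow$ normal faithful state'' the paper simply cites \cite[Lemma 7.3]{HKPP20}, whereas you reprove it by transporting the classical von Neumann algebra argument (Zorn's lemma applied to a maximal family of normal states with pairwise orthogonal supports, then $\varphi = \sum_n 2^{-n}\psi_n$) to the Jordan setting; the two facts you lean on do hold for $\JBW^*$-algebras --- range/support projections exist, a positive normal functional kills a positive element iff it kills its range projection, the compression $U_q=\{q,\cdot\,,q\}$ is positive, and the predual separates points --- so the argument goes through, modulo the trivial adjustment when the maximal family is finite. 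What your approach buys is independence from \cite{HKPP20}; what the citation buys the author is brevity. For the converse direction your argument is essentially the paper's, and in fact slightly cleaner: the paper writes $\sum_{i\in I}\varphi(p_i)=\varphi(\sum_{i\in I}p_i)<\infty$, which tacitly uses normality of $\varphi$ to interchange the (possibly uncountable) sum with $\varphi$, while your reduction to finite subfamilies plus $I=\bigcup_n\{i:\varphi(p_i)>1/n\}$ needs only positivity and $\varphi(1)=1$.
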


\begin{proof}
An implication is \cite[Lemma 7.3]{HKPP20}. Conversely, suppose that $\JBW^*$-algebra $\cal{M}$ admits a normal faithful state $\varphi$. Given any orthogonal family $(p_i)_{i \in I}$ of non-zero projections in $\cal{M}$, we have $\varphi(p_i)>0$ for any $i \in I$ and 
$$
\sum_{i \in I} \varphi(p_i)
=\varphi\bigg(\sum_{i \in I} p_i\bigg) 
<\infty.
$$
Consequently, the index set $I$ is countable. Thus $\cal{M}$ is $\sigma$-finite.
\end{proof}


\begin{example} \normalfont
\label{Ex-AVN}
As we said in the introduction, a von Neumann algebra $\cal{M}$ equipped with the Jordan product $\circ$ defined in \eqref{Jordan-product} is a $\JW^*$-algebra (hence a $\JBW^*$-algebra). Let $\varphi$ be a normal faithful state on $\cal{M}$ and consider the density operator $D_{\varphi}$ associated with $\varphi$ in the context of the Haagerup noncommutative $\L^p$-spaces $\L^{p,H}(\cal{M})$ introduced in \cite{Haa79}. Recall that by \cite[(1.13)]{HJX10}, $D_\varphi$ belongs to the Banach space $\L^{1,H}(\cal{M})$ and that we can recover the state $\varphi$ by 
\begin{equation}
\label{HJX-1.13}
\varphi(x)
=\tr(D_\varphi x)
=\tr(xD_\varphi), \quad x \in \cal{M}.
\end{equation} 
For any $x,y \in \cal{M}$, we have
\begin{align}
\MoveEqLeft
\la i(x),y \ra_{\cal{M}_*,\cal{M}}
=\varphi_x(y)
\ov{\eqref{Translate-state}}{=} \varphi(x \circ y)
\ov{\eqref{Jordan-product}}{=} \varphi\bigg(\frac{1}{2}(xy+yx)\bigg) \label{inter-45} \\
&=\frac{1}{2}\big[\varphi(xy)+\varphi(yx)\big]
=\frac{1}{2}(x\varphi +\varphi x)(y) \nonumber
\end{align}
where $x\varphi \ov{\mathrm{def}}{=} \varphi(\cdot\, x)$ and $\varphi x \ov{\mathrm{def}}{=} \varphi(x\,\cdot)$. In this case, the linear map $i \co \cal{M} \to \cal{M}_*$, $x \mapsto \varphi_x$ is given by $x \mapsto \frac{1}{2}(x\varphi +\varphi x)$. Note, that this embedding is different from the ones of \cite{Kos84}.

Recall that $\Phi \co \L^{1,H}(\cal{M}) \to \cal{M}_*$, $z \mapsto \tr(z\, \cdot)$ is an isometric isomorphism. For any $x,y \in \cal{M}$, note that
\begin{align*}
\MoveEqLeft
\la i(x),y \ra_{\cal{M}_*,\cal{M}}
\ov{\eqref{inter-45}}{=} \frac{1}{2}(x\varphi +\varphi x)(y)
=\frac{1}{2}\big[ \varphi(y x)+\varphi(x y)\big] 
\ov{\eqref{HJX-1.13}}{=} \frac{1}{2}\big[ \tr(yxD_{\varphi} ) + \tr(D_{\varphi} xy)\big] \\
&=\tr\bigg(\frac{1}{2}(yxD_{\varphi} + D_{\varphi} xy)\bigg) 
=\bigg\la \Phi\bigg(\frac{1}{2}(xD_{\varphi}+D_{\varphi} x)\bigg),y \bigg\ra_{\cal{M}_*,\cal{M}}.
\end{align*}
So the embedding $i \co \cal{M} \to \cal{M}_*$ identifies to the embedding $\iota_1 \co \cal{M} \to \L^{1,H}(\cal{M})$, $x \mapsto \frac{1}{2}(xD_{\varphi} +D_{\varphi} x)$ considered in the paper \cite[p.~2]{RiX11} (up to the constant $\frac{1}{2}$). By \cite[Corollary 1.2]{RiX11}, we deduce a norm equivalence
\begin{equation*}
\label{}
\bnorm{D_{\varphi}^{\frac{1}{p}}x+xD_{\varphi}^{\frac{1}{p}}}_{\L^{p,H}(\cal{M})} 
\lesssim_p \norm{x}_{\L^p(\cal{M},\varphi)}
\lesssim_p \bnorm{D_{\varphi}^{\frac{1}{p}}x+xD_{\varphi}^{\frac{1}{p}}}_{\L^{p,H}(\cal{M})},\quad x \in \cal{M}.
\end{equation*}
Here the notation $\lesssim_p$ means an inequality $\leq K_p$ for a constant $K_p \geq 0$ only depending on $p$.

Note that in the particular case where the state $\varphi$ is a \textit{trace}, the linear map $i \co \cal{M} \to \cal{M}_*$, $x \mapsto \varphi_x$ is given by $x \mapsto \frac{1}{2}(x\varphi +\varphi x)=x\varphi$. This embedding is used in the paper \cite{Kos84}. So the space $\L^p(\cal{M},\varphi)$ is isometric to the Dixmier noncommutative $\L^p$-space $\L^{p,D}(\cal{M})$ introduced in \cite{Dix53}, since Kosaki noncommutative $\L^p$-spaces are isometric to Dixmier noncommutative $\L^p$-spaces.
\end{example}

Now, we will see that we can equip a nonassociative $\L^p$-space with a canonical anti-linear involution. Let $C \co \cal{M} \to \cal{M}$, $x \mapsto x^*$ be the conjugation map. If $x \in \cal{M}$ and $\psi \in \cal{M}_*$, we let $
\psi^*(x) 
\ov{\mathrm{def}}{=} \ovl{\psi(x^*)}$. Since the involution $*$ is weak* continuous by \cite[Theorem 5.1.29 (ii) p.~9]{CGRP18}, note that $\psi^*$ belongs to $\cal{M}_*$. Clearly, we have $(\psi^{*})^*=\psi$. 

\begin{prop}
\label{Prop-involution}
Let $\cal{M}$ be a $\JBW^*$-algebra equipped with a normal faithful state $\varphi$. Suppose that $1 \leq p \leq \infty$. The restriction of the map $C_* \co \cal{M}_* \to \cal{M}_*$, $\psi \to \psi^*$ on the subspace $\L^p(\cal{M})$ induces an isometric anti-linear map $C_p \co \L^p(\cal{M}) \to \L^p(\cal{M})$, $h \mapsto h^*$. Moreover, we have $C_\infty=C$.
\end{prop}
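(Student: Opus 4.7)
The plan is to apply Proposition \ref{Prop-Werner} to the interpolation couple $(X_0,X_1) = (\cal{M},\cal{M}_*)$, viewing $\cal{M}$ as a subspace of $\cal{M}_*$ via the embedding $i$ of Proposition \ref{Prop-injection}, with anti-linear map the involution $C \co \cal{M} \to \cal{M}$, $x \mapsto x^*$. Since $\L^p(\cal{M}) = (\cal{M},\cal{M}_*)_{1/p}$, this will directly give an anti-linear isometry on $\L^p(\cal{M})$ for every $1 < p < \infty$; it then remains to identify this induced map with the restriction of $C_*$ and to handle the easy boundary cases $p \in \{1,\infty\}$.

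First I would verify the hypotheses of Proposition \ref{Prop-Werner}. The intersection $\cal{M} \cap \cal{M}_* = i(\cal{M})$ is trivially dense in $\cal{M}$ and is dense in $\cal{M}_*$ by Proposition \ref{Prop-injection}. The map $C$ is anti-linear, bijective (as $C^2 = \Id$), and isometric for $\norm{\cdot}_\cal{M}$ by \eqref{def-JBstar}. The crucial point is the compatibility identity $(i(x))^* = i(x^*)$ for $x \in \cal{M}$, which follows from the computation
$$
(i(x))^*(y)
= \ovl{\varphi(x \circ y^*)}
= \varphi(x^* \circ y)
= i(x^*)(y), \quad y \in \cal{M},
$$
using that $\varphi(z^*) = \ovl{\varphi(z)}$ (valid since $\varphi$ is a positive normal functional on a $\JB^*$-algebra) together with $(a \circ b)^* = a^* \circ b^*$. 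A routine duality calculation shows that $C_* \co \cal{M}_* \to \cal{M}_*$, $\psi \mapsto \psi^*$, is an isometry (writing $\norm{\psi^*}_{\cal{M}_*} = \sup_{\norm{x}_\cal{M} \leq 1} |\psi(x^*)| = \norm{\psi}_{\cal{M}_*}$), and combining this with the identity just proved yields $\norm{i(x^*)}_{\cal{M}_*} = \norm{i(x)}_{\cal{M}_*}$, which is the required isometry of $C$ for $\norm{\cdot}_{\cal{M}_*}$.

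Proposition \ref{Prop-Werner} then supplies an anti-linear isometry $C_p \co \L^p(\cal{M}) \to \L^p(\cal{M})$ extending $C$ on $\cal{M}$. To identify this $C_p$ with $C_*|_{\L^p(\cal{M})}$, I would observe that both maps are continuous from $(\L^p(\cal{M}), \norm{\cdot}_{\L^p})$ into $(\cal{M}_*, \norm{\cdot}_{\cal{M}_*})$---for $C_p$ by composing with the contractive inclusion $\L^p(\cal{M}) \hookrightarrow \cal{M}_*$---and they coincide on the dense subspace $\cal{M}$, hence on all of $\L^p(\cal{M})$. This proves that $C_*$ preserves $\L^p(\cal{M})$ isometrically for $1 < p < \infty$. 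The cases $p=\infty$ (where $C_\infty = C$ by definition) and $p=1$ (the already verified isometry of $C_*$ on $\cal{M}_*$) are immediate. The only real subtlety in the argument is the compatibility identity $(i(x))^* = i(x^*)$, which is precisely what makes the two a priori different involutions (on $\cal{M}$ and on $\cal{M}_*$) match up on the dense subspace $i(\cal{M})$ and so permits the unambiguous passage to the interpolation scale.
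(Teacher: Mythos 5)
Your proof is correct and follows essentially the same route as the paper: both establish the compatibility identity $(\varphi_x)^* = \varphi_{x^*}$ (equivalently $C_* \circ i = i \circ C$), verify that $C_*$ is an isometry of $\cal{M}_*$ by the same duality computation, and then invoke Proposition \ref{Prop-Werner} to obtain the anti-linear isometry $C_p$ on $\L^p(\cal{M})$. Your additional remark identifying the interpolated map with the restriction of $C_*$ via density is a harmless elaboration of what the paper leaves implicit.
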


\begin{proof}
For any $x,y \in \cal{M}$, we have
$$
(\varphi_x)^*(y)
=\ovl{\varphi_x(y^*)}
\ov{\eqref{Translate-state}}{=} \ovl{\varphi(x \circ y^*)}
=\varphi(x^* \circ y)
=\varphi_{x^*}(y).
$$
We deduce that $(\varphi_x)^* = \varphi_{x^*}$ for any $x \in \cal{M}$. So we have the following commutative diagram.
$$
\xymatrix @R=1cm @C=2cm{
    \cal{M}_* \ar[r]^{C_*}   & \cal{M}_*    \\
   \cal{M} \ar[u]^{i} \ar[r]_{C} & \cal{M} \ar[u]_{i} 
}.
$$
For any $x \in \cal{M}$, note that 
$$
|\psi^*(x)|
=\big|\ovl{\psi(x^*)}\big|
=|\psi(x^*)| 
\leq \norm{\psi} \norm{x^*} 
\ov{\eqref{def-JBstar}}{=} \norm{\psi} \norm{x}.
$$ 
So $\norm{\psi^*} \leq \norm{\psi}$. Since $(\psi^{*})^*=\psi$, we have $\norm{\psi^*}=\norm{\psi}$. We conclude that $C_*$ is an isometry. By Proposition \ref{Prop-Werner}, we deduce an anti-linear isometric map $C_p \co \L^p(\cal{M}) \to \L^p(\cal{M})$. 
\end{proof}

Let $\varphi$ be a normal state on a $\JBW^*$-algebra $\cal{M}$. By the proof of \cite[Lemma 5.10.2 p.~275]{CGRP18} and \cite[Lemma 5.10.23 p.~286]{CGRP18}, the map $(x,y) \mapsto \varphi(x^* \circ y)$ is a positive definite hermitian sesquilinear form on the space $\cal{M}$. So we can consider introduce the complex scalar product
\begin{equation}
\label{Def-scalar-varphi}
\langle x, y \rangle_{\cal{H}_\varphi}
\ov{\mathrm{def}}{=}\varphi(x^* \circ y), \quad x,y \in \cal{M}
\end{equation}
with associated norm
\begin{equation}
\label{norm-H-varphi}
\norm{x}_{\cal{H}_\varphi}
=\sqrt{\varphi(x^* \circ x)}, \quad x \in \cal{M}.
\end{equation}
We denote by $\cal{H}_\varphi$ the associated Hilbert space which is the completion of $ \cal{M}$.

\begin{prop}
\label{L2-nonassociative}
Let $\cal{M}$ be a $\JBW^*$-algebra equipped with a normal faithful state $\varphi$. 
\begin{enumerate}
	\item The space $\L^2(\cal{M})$ is linearly isometric to the Hilbert space $\cal{H}_\varphi$. More precisely the norms of $\L^2(\cal{M})$ and of $\cal{H}_\varphi$ coincide on their common linear subspace $\cal{M}$.
	\item Suppose that $1 < p < \infty$. We have $(\L^p(\cal{M}))^*=\L^{p^*}(\cal{M})$ isometrically.
\end{enumerate}
\end{prop}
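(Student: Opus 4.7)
The plan is to obtain both statements simultaneously by applying Theorem \ref{Th-Werner} to the interpolation couple $(X_0,X_1)=(\cal{M},\cal{M}_*)$, with $\cal{M}$ embedded in $\cal{M}_*$ via the injection $i$ of Proposition \ref{Prop-injection}, and with the natural duality pairing $\la y,\xi\ra_{\cal{M},\cal{M}_*}=\xi(y)$ induced by $\cal{M}=(\cal{M}_*)^*$. Once the five hypotheses of Theorem \ref{Th-Werner} are verified with the involution $x\mapsto x^*$ inherited from $\cal{M}$, we get $(\cal{M},\cal{M}_*)_{1/2}=\cal{H}_\varphi$, which is (1) by Definition \ref{def-nonasso-Lp}, together with $(\cal{M},\cal{M}_*)_{\theta}^{*}=(\cal{M},\cal{M}_*)_{1-\theta}$ for any $\theta\in(0,1)$, which with $\theta=1/p$ yields (2).

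Next I would check each hypothesis in turn. Density of $\cal{M}$ in $\cal{M}_*$ is exactly Proposition \ref{Prop-injection}, so (1) of Theorem \ref{Th-Werner} is done. Hypothesis (2) holds automatically, since for $x\in\cal{M}$ one has
$$
\norm{i(x)}_{\cal{M}_*}
=\sup_{\norm{y}_{\cal{M}}\leq 1}|\la y,i(x)\ra_{\cal{M},\cal{M}_*}|
$$
by the predual description of the norm on $\cal{M}_*$. For (3), let $\psi$ be linear on $\cal{M}$ and continuous for both norms; by density, it extends uniquely to a continuous functional on $\cal{M}_*$, and since $\cal{M}=(\cal{M}_*)^*$, this extension is of the form $\xi\mapsto\xi(z)$ for some $z\in\cal{M}$. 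Hence $\psi(x)=i(x)(z)=\la z,i(x)\ra_{\cal{M},\cal{M}_*}$ for $x\in\cal{M}$, as required.

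For hypothesis (4), I would take the $*$ from $\cal{M}$ and compute, for $x,y\in\cal{M}$,
$$
\la i(y),x^*\ra_{\cal{M}_*,\cal{M}}
=\varphi_y(x^*)
\ov{\eqref{Translate-state}}{=}\varphi(y\circ x^*)
=\varphi(x^*\circ y)
\ov{\eqref{Def-scalar-varphi}}{=}\la x,y\ra_{\cal{H}_\varphi},
$$
using commutativity of $\circ$; the form is a scalar product by the positive-definiteness recalled just before the statement. For (5), the involution is isometric on $\cal{M}$ by \eqref{def-JBstar}; on $\cal{M}_*$ because the relation $(\varphi_x)^*=\varphi_{x^*}$ established in the proof of Proposition \ref{Prop-involution} together with the fact that $C_*$ is an isometry gives $\norm{i(x^*)}_{\cal{M}_*}=\norm{i(x)}_{\cal{M}_*}$; and on $\cal{H}_\varphi$ because
$$
\norm{x^*}_{\cal{H}_\varphi}^2
\ov{\eqref{norm-H-varphi}}{=}\varphi(x\circ x^*)
=\varphi(x^*\circ x)
=\norm{x}_{\cal{H}_\varphi}^2.
$$

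The main obstacle is purely bookkeeping: one has to keep distinct the two roles of $\cal{M}$, namely as the predual-dual $(\cal{M}_*)^*$ (providing the bracket $\la\cdot,\cdot\ra_{\cal{M},\cal{M}_*}$) and as the subspace of $\cal{M}_*$ embedded by $i$ using $\varphi$, and translate every hypothesis of Theorem \ref{Th-Werner} through this identification. Once this translation is made, no further analytic input is needed, as all five conditions reduce to facts already established in Proposition \ref{Prop-injection}, Proposition \ref{Prop-involution} and the axioms \eqref{def-JBstar} of a $\JBW^*$-algebra.
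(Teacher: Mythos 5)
Your proposal is correct and follows essentially the same route as the paper: both proofs verify the five hypotheses of Theorem \ref{Th-Werner} for the couple $(\cal{M},\cal{M}_*)$ using Proposition \ref{Prop-injection} for density, the predual duality for conditions (2)--(3), the computation $\la i(y),x^*\ra_{\cal{M}_*,\cal{M}}=\varphi(x^*\circ y)=\la x,y\ra_{\cal{H}_\varphi}$ for condition (4), and Proposition \ref{Prop-involution} together with $\varphi(x\circ x^*)=\varphi(x^*\circ x)$ for condition (5). Your write-up is, if anything, slightly more explicit than the paper's on points (4) and (5).
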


\begin{proof}
It suffices to check the assumptions of Theorem \ref{Th-Werner}. The first point is a consequence of Lemma \ref{Prop-injection}. The second point is obvious since for any $x \in \cal{M}$, we have 
$$
\norm{i(x)}_{\cal{M}_*} 
=\sup \big\{|\la i(x), y \ra_{\cal{M}_*,\cal{M}} | : y \in \cal{M}, \norm{y}_{\cal{M}} \leq 1 \big\}.
$$ 
For the third point, consider a linear functional $\psi$ on $ \cal{M}$ which is continuous with respect to the norms of $\cal{M}$ and $\cal{M}_*$. Since the subspace $i(\cal{M})$ is dense in $\cal{M}_*$, we can extend $\psi$ as a linear continuous form $\tilde{\psi}$ on $\cal{M}_*$. As $(\cal{M}_*)^*=\cal{M}$ there exists some $z \in \cal{M}$ such that
\begin{equation}
\label{der}
\tilde{\psi}(y) 
=\la y,z \ra_{\cal{M}_*,\cal{M}}, \quad y \in \cal{M}_*.
\end{equation}
We deduce that 
$$
\psi(x)
=\tilde{\psi}(i(x)) 
\ov{\eqref{der}}{=} \la i(x),z \ra_{\cal{M}_*,\cal{M}}, \quad x \in \cal{M}.
$$
The fourth point is true by using the scalar product \eqref{Def-scalar-varphi}. For any $x \in \cal{M}$, we have $\norm{x^*}_{\cal{H}_\varphi}
\ov{\eqref{norm-H-varphi}}{=} \sqrt{\varphi(x \circ x^*)}=\sqrt{\varphi(x^* \circ x)}\ov{\eqref{norm-H-varphi}}{=}\norm{x}_{\cal{H}_\varphi}$. Combined with Proposition \ref{Prop-involution}, this proves the last point.
\end{proof}


\begin{remark} \normalfont
Let $\cal{M}$ be a finite von Neumann algebra equipped with a normal faithful tracial state $\tau$. Then by essentially \cite[Remark p.~149]{AlS03}, the restriction of $\tau$ on the $\JW$-algebra $\cal{M}_{\sa}$ is a normal finite faithful trace. We have seen in the end of Example \ref{Ex-AVN}, we recover with the Banach space $\L^p(\cal{M},\tau)$ of \eqref{Def-Lp-state-JBW} the Dixmier noncommutative $\L^p$-space $\L^{p,D}(\cal{M})$. If $p=2$, we can see this fact by the following computation. For any $x \in \cal{M}$, we have 
$$
\norm{x}_{\cal{H}_\varphi}
\ov{\eqref{norm-H-varphi}}{=} \big(\tau (x^* \circ x)\big)^{\frac{1}{2}}
\ov{\eqref{Jordan-product}}{=} \bigg(\tau \bigg(\frac{1}{2}(x^* x+xx^*)\bigg)\bigg)^{\frac{1}{2}}
=\big(\tau (x^* x)\big)^{\frac{1}{2}}
=\norm{x}_{\L^{2,D}(\cal{M})}
$$
\end{remark}

\begin{prop}
Let $\cal{M}$ be a $\JBW^*$-algebra equipped with a normal faithful state $\varphi$. Suppose that $1 < p <\infty$. The Banach space $\L^p(\cal{M})$ is uniformly convex (hence reflexive) and uniformly smooth.
\end{prop}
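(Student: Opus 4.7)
The plan rests on the fact, established in Proposition \ref{L2-nonassociative}(1), that $\L^2(\cal{M})=\cal{H}_\varphi$ is a Hilbert space. For $p=2$ the assertion is then immediate, since Hilbert spaces are uniformly convex and uniformly smooth. For $p\in(1,\infty)\setminus\{2\}$ I would proceed in two steps by means of complex interpolation.

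First, using the reiteration theorem for complex interpolation (\cite[Theorem 4.6.1]{BeL76}), I would rewrite $\L^p(\cal{M})=(\cal{M},\cal{M}_*)_{1/p}$ as a complex interpolation space having the Hilbert space $\L^2(\cal{M})$ as one of its endpoints: explicitly, $\L^p(\cal{M})=(\cal{M},\L^2(\cal{M}))_{2/p}$ for $p>2$, and $\L^p(\cal{M})=(\L^2(\cal{M}),\cal{M}_*)_{2/p-1}$ for $1<p<2$. The density hypothesis required by the reiteration theorem is ensured by the fact, already recorded after Definition \ref{def-nonasso-Lp}, that $\cal{M}$ is dense in $\L^q(\cal{M})$ for every $q\in[1,\infty]$.

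Second, I would invoke the classical interpolation principle that a complex interpolation space $(X_0,X_1)_\theta$ between any Banach space and a Hilbert space is, for $\theta\in(0,1)$, super-reflexive with controlled modulus of convexity and modulus of smoothness, hence both uniformly convex and uniformly smooth. A self-contained derivation proceeds through Clarkson-type inequalities obtained from the bilinear interpolation theorem (Theorem \ref{Th-bilinear-interpolation}) applied to the linear operator $T\co(x,y)\mapsto(x+y,x-y)$ on $\ell^q$-sums, with $q=2$ giving $\norm{T}=\sqrt{2}$ by the parallelogram law in $\L^2(\cal{M})$ and $q=\infty$ giving $\norm{T}\leq 2$ trivially; this yields the Clarkson inequality $\norm{x+y}_{\L^p}^p+\norm{x-y}_{\L^p}^p\leq 2^{p-1}\bigl(\norm{x}_{\L^p}^p+\norm{y}_{\L^p}^p\bigr)$ for $p\geq 2$, and the dual form in the range $1<p<2$. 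Uniform convexity then follows by the standard Clarkson argument, uniform smoothness follows via Smulian's theorem from the duality $(\L^p(\cal{M}))^*=\L^{p^*}(\cal{M})$ of Proposition \ref{L2-nonassociative}(2), and reflexivity is automatic by Milman--Pettis.

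The main obstacle is the second step: although the underlying principle is classical, the details of the Clarkson-type derivation in the nonassociative setting require care, in particular the verification that the $\ell^p$-direct sums $\L^p(\cal{M})\oplus_p\L^p(\cal{M})$ genuinely arise as complex interpolation spaces between the corresponding $\ell^2$- and $\ell^\infty$-sums in a manner compatible with the embedding $i\co\cal{M}\to\cal{M}_*$ fixing the compatibility of the couple $(\cal{M},\cal{M}_*)$. Once this bookkeeping is handled, the argument reduces to a direct transcription of the well-known Kosaki-style interpolation proof of uniform convexity for noncommutative $\L^p$-spaces.
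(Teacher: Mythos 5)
Your proposal is correct and shares its skeleton with the paper's proof: both first use the reiteration theorem to rewrite $\L^p(\cal{M})$ as a complex interpolation space having the Hilbert space $\L^2(\cal{M})=\cal{H}_\varphi$ of Proposition \ref{L2-nonassociative} as one endpoint (with $\cal{M}$ as the other endpoint for $p>2$ and $\cal{M}_*$ for $1<p<2$), and both deduce uniform smoothness at the end by dualizing through $(\L^p(\cal{M}))^*=\L^{p^*}(\cal{M})$. Where you diverge is the key middle step. The paper simply invokes the theorem of Cwikel and Reisner \cite{CwR82}, which states that complex interpolation of an arbitrary Banach space with a uniformly convex space produces a uniformly convex space; this makes the whole proof a two-line citation chain. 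You instead propose to manufacture Clarkson-type inequalities by interpolating the linear map $T(x,y)=(x+y,x-y)$ between the bound $\sqrt{2}$ on $\ell^2_2(\L^2(\cal{M}))$ and the trivial bound on the $\ell^\infty_2$-, respectively $\ell^1_2\to\ell^\infty_2$-, level of the other endpoint. This works and is essentially Kosaki's argument; it buys a self-contained proof with explicit power-type moduli of convexity, at the cost of the bookkeeping you rightly flag, namely the identification $(\ell^{q_0}_2(X_0),\ell^{q_1}_2(X_1))_\theta=\ell^{q}_2\big((X_0,X_1)_\theta\big)$, which for a two-element index set is the standard vector-valued interpolation theorem \cite[Theorem 5.1.2]{BeL76} and poses no difficulty in the nonassociative setting, since only the norms of the interpolation couple enter. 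Two small corrections: the map $T$ is linear, so the relevant tool is the exactness of the complex method for linear operators rather than the bilinear interpolation theorem (Theorem \ref{Th-bilinear-interpolation}), which is stated for bilinear maps; and for $1<p<2$ the dual Clarkson inequality requires interpolating $T$ between $\ell^1_2\to\ell^\infty_2$ on the $\cal{M}_*$-level and $\ell^2_2\to\ell^2_2$ on the Hilbert level, not between two maps with the same source and target exponents. Neither point affects the validity of the strategy.
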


\begin{proof}
Using the reiteration theorem \cite[Theorem 4.6.1 p.~101]{BeL76}, the Banach space $\L^p(\cal{M})$, $1 < p < 2$ is a complex interpolation space between the predual $\cal{M}_*$ and the Hilbert space $\L^2(\cal{M})$. By Proposition \ref{L2-nonassociative}, it follows from \cite{CwR82} that $\L^p(\cal{M})$ is uniformly convex. We can use a similar argument for $2 < p < \infty$, since in this case the space $\L^p(\cal{M})$ is a complex interpolation space between $\cal{M}$ and $\L^2(\cal{M})$. The uniform smoothness is obtained with the classical result \cite[Theorem 5.5.12 p.~500]{Meg98} which states that a normed space is uniformly convex if and only if its dual space is uniformly smooth.
\end{proof}

Now, we show that each $*$-automorphism of $\cal{M}$ and each normal contractive projection $\E \co \cal{M} \to \cal{M}$ onto a sub-$\JBW^*$-algebra, which preserve the state, induce canonical maps on the associated nonassociative $\L^p$-spaces. Recall that we have a contractive dense inclusion $\cal{M} \subset \L^p(\cal{M})$.

\begin{prop}
\label{prop-proj-Lp}
Let $\cal{M}$ be a $\JBW^*$-algebra equipped with a normal faithful state $\varphi$. Let $\alpha$ be a $*$-automorphism of $\cal{M}$ satisfying $\varphi \circ \alpha = \varphi$. Then, for each $1 \leq p < \infty$, the map $\alpha$ induces a surjective isometry $\alpha_p \co \L^p(\cal{M}) \to \L^p(\cal{M})$. Also, let $\E \co \cal{M} \to \cal{M}$ be a normal contractive projection from $\cal{M}$ onto a sub-$\JBW^*$-algebra $\cal{N}$ satisfying $\varphi \circ \E = \varphi$. Then, for each $1 \leq p < \infty$, $\E$ induces a projection $\E_p \co \L^p(\cal{M},\varphi) \to \L^p(\cal{M},\varphi)$ onto a subspace isometric to the nonassociative $\L^p$-space $\L^p(\cal{N},\varphi|\cal{N})$.
\end{prop}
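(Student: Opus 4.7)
The plan is to handle both assertions by the same mechanism: lift the given map from $\cal{M}$ to a compatible map on the predual $\cal{M}_*$ that commutes with the embedding $i \co \cal{M} \to \cal{M}_*$ of Proposition \ref{Prop-injection}, then invoke functoriality of complex interpolation (the linear version of Theorem \ref{Th-bilinear-interpolation}) to descend to $\L^p(\cal{M},\varphi)=(\cal{M},\cal{M}_*)_{\frac{1}{p}}$.

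For the first assertion, I would set $\tilde{\alpha} \co \cal{M}_* \to \cal{M}_*$, $\psi \mapsto \psi \circ \alpha^{-1}$. Since a Jordan $*$-automorphism of a $\JBW^*$-algebra is isometric and weak$^*$-continuous, $\tilde{\alpha}$ is a surjective isometry of $\cal{M}_*$. A short computation using that $\alpha$ is a Jordan homomorphism and that $\varphi \circ \alpha^{-1}=\varphi$ yields
$$
\tilde{\alpha}(i(x))(y) = \varphi(x \circ \alpha^{-1}(y)) = \varphi(\alpha^{-1}(\alpha(x) \circ y)) = \varphi(\alpha(x) \circ y) = i(\alpha(x))(y)
$$
for all $x,y \in \cal{M}$, so $\tilde{\alpha} \circ i = i \circ \alpha$. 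Complex interpolation then produces a contraction $\alpha_p \co \L^p(\cal{M},\varphi) \to \L^p(\cal{M},\varphi)$; running the same argument with $\alpha^{-1}$ yields a two-sided inverse, so $\alpha_p$ is a surjective isometry.

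For the second assertion, I would introduce the pre-adjoint $\E_* \co \cal{M}_* \to \cal{M}_*$, $\psi \mapsto \psi \circ \E$, which is a contractive projection by normality and contractivity of $\E$. The compatibility $\E_* \circ i = i \circ \E$ reduces to the Jordan symmetry $\varphi(x \circ \E(y)) = \varphi(\E(x) \circ y)$ for $x,y \in \cal{M}$. Granting that $\E$ behaves as a Jordan conditional expectation (so $\E(a \circ \E(b)) = \E(a) \circ \E(b)$), both sides collapse to $\varphi(\E(x) \circ \E(y))$ via $\varphi = \varphi \circ \E$. Complex interpolation then produces a contractive projection $\E_p$ on $\L^p(\cal{M},\varphi)$. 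To identify its range, I would apply Lemma \ref{Lemma-interpolation} with $C = \E_*(\cal{M}_*)$: the intersection $i(\cal{M}) \cap C$ equals $i(\cal{N})$ (since $i(x) \in C$ forces $\E(x)=x$), and the restriction $R \co \cal{M}_* \to \cal{N}_*$, $\psi \mapsto \psi|_{\cal{N}}$, is an isometric isomorphism when restricted to $C$, with inverse $\phi \mapsto \phi \circ \E$ (injectivity because $\psi = \psi \circ \E$ is determined by $\psi|_{\cal{N}}$, isometry by a straightforward norm comparison using contractivity of $\E$). A final check shows that $R$ carries $i(x) \in i(\cal{N})$ to the embedding $i_{\cal{N}}(x) \in \cal{N}_*$ associated with $\varphi|\cal{N}$, so the compatible couple $(i(\cal{N}),C)$ identifies isometrically with $(\cal{N},\cal{N}_*)$, giving $\E_p(\L^p(\cal{M},\varphi)) \cong \L^p(\cal{N},\varphi|\cal{N})$ isometrically.

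The main obstacle is justifying the Jordan symmetry above, i.e.\ showing that under the hypotheses $\E$ is forced to behave as a Jordan conditional expectation --- essentially a $\JBW^*$-analogue of Tomiyama's theorem. Once this is granted, the rest is routine interpolation bookkeeping combined with the standard identification of $\E_*(\cal{M}_*)$ with $\cal{N}_*$.
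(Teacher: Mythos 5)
Your first argument is exactly the paper's: the paper defines the map on $i(\cal{M})$ by $\varphi_x \mapsto \varphi_{\alpha(x)}$ and verifies $\varphi_{\alpha(x)}(\alpha(y))=\varphi_x(y)$, i.e.\ $\varphi_{\alpha(x)}=\varphi_x\circ\alpha^{-1}$, which is precisely your intertwining $\tilde{\alpha}\circ i=i\circ\alpha$, and then interpolates, noting that a contraction with a contractive inverse is an isometry. For the second assertion the paper only says ``similar arguments with Lemma \ref{Lemma-interpolation}''; your outline is the natural way to fill that in and matches what the paper actually carries out in Propositions \ref{Prop-complementation-1} and \ref{prop-spin} (identify the preadjoint with the $\L^1$-extension via selfadjointness, apply Lemma \ref{Lemma-interpolation}, identify the range with the predual of the subalgebra). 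The one point you leave open --- the symmetry $\varphi(x\circ\E(y))=\varphi(\E(x)\circ y)$, without which $\E_*\circ i=i\circ\E$ is unproved --- is indeed the crux, and as written your proof is incomplete there; but it is a missing citation rather than a missing idea. Since $1\in\cal{N}$ and $\E$ fixes $\cal{N}$, the projection $\E$ is unital, and a unital contraction between unital $\JB^*$-algebras is positive; a positive unital projection onto a Jordan subalgebra automatically satisfies the conditional expectation identity \eqref{Def-cond-exp-JBstar} (the Jordan analogue of Tomiyama's theorem, due to Effros and St\o rmer). With \eqref{Def-cond-exp-JBstar}, the commutativity of $\circ$, and $\varphi=\varphi\circ\E$, both sides of the symmetry collapse to $\varphi(\E(x)\circ\E(y))$, exactly as you indicate, and the rest of your range identification ($i(\cal{M})\cap\E_*(\cal{M}_*)=i(\cal{N})$ and the isometry $\psi\mapsto\psi|_{\cal{N}}$ onto $\cal{N}_*$) is correct.
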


\begin{proof}
Note that $\alpha \co \cal{M} \to \cal{M}$ is a surjective isometry by \cite[Proposition 3.4.4 p.~360]{CGRP14}. Due to the invariance $\varphi \circ \alpha = \varphi$, we have for any $x,y \in \cal{M}$
$$
\varphi_{\alpha(x)}(\alpha(y))
\ov{\eqref{Translate-state}}{=} \varphi(\alpha(x) \circ \alpha(y))
=\varphi(\alpha(x \circ y))
=\varphi(x \circ y)
\ov{\eqref{Translate-state}}{=} \varphi_x(y).
$$ 
So the map $i(\cal{M}) \to i(\cal{M})$, $\varphi_x \mapsto \varphi_{\alpha(x)}$, induces a surjective isometry $\alpha_1 \co \L^1(\cal{M}) \to \L^1(\cal{M})$, which sends $i(\cal{M}) = \cal{M}$ into itself isometrically. Thus the result follows by interpolation since a linear contraction with contractive linear inverse is necessarily isometric. The second assertion can be proved by similar arguments with Lemma \ref{Lemma-interpolation}.
\end{proof}

\begin{prop}
\label{prop-module}
Let $\cal{M}$ be a $\JBW^*$-algebra equipped with a normal faithful tracial state $\varphi$. Suppose that $1 \leq p \leq \infty$. The product $(x,y) \mapsto x \circ y$ on $\cal{M}$ induces contractive maps $\cal{M} \times \L^p(\cal{M}) \to \L^p(\cal{M})$, $(h,k) \mapsto h \circ k$ and $\L^p(\cal{M}) \times \cal{M} \to \L^p(\cal{M})$, $(h,k) \mapsto h \circ k$. 
\end{prop}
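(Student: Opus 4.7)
The plan is to apply bilinear interpolation (Theorem \ref{Th-bilinear-interpolation}). Since the Jordan product is commutative, the two statements are equivalent, so I focus on proving that $(h,k) \mapsto h \circ k$ extends contractively from $\cal{M} \times \L^p(\cal{M})$ to $\L^p(\cal{M})$. In view of \eqref{Def-Lp-state-JBW}, I need to produce a compatible bilinear map on the couple $\cal{M} \times (\cal{M},\cal{M}_*) \to (\cal{M},\cal{M}_*)$, then invoke Theorem \ref{Th-bilinear-interpolation} with $\theta = 1/p$.

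First, for $x \in \cal{M}$ and $\psi \in \cal{M}_*$ I would define $x \circ \psi \in \cal{M}_*$ by
\begin{equation*}
(x \circ \psi)(z) \ov{\mathrm{def}}{=} \psi(x \circ z), \quad z \in \cal{M}.
\end{equation*}
Weak* continuity of $x \circ \psi$ on $\cal{M}$ follows from the separate weak* continuity of the Jordan product already used in Lemma \ref{Lemma-contractivity}, and the contractivity $\norm{x \circ \psi}_{\cal{M}_*} \leq \norm{x}_\cal{M}\,\norm{\psi}_{\cal{M}_*}$ follows immediately from the first inequality of \eqref{def-JBstar}. On the other side, the product $\cal{M} \times \cal{M} \to \cal{M}$ is contractive by \eqref{def-JBstar}.

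The crucial step is to check that these two bilinear maps agree on the intersection, i.e.~that for every $x,y \in \cal{M}$ we have $x \circ i(y) = i(x \circ y)$ in $\cal{M}_*$. Here is where the tracial hypothesis on $\varphi$ enters essentially. For any $z \in \cal{M}$,
\begin{align*}
(x \circ i(y))(z)
&= (x \circ \varphi_y)(z)
= \varphi_y(x \circ z)
\ov{\eqref{Translate-state}}{=} \varphi(y \circ (x \circ z)) \\
&\ov{\eqref{trace}}{=} \varphi((y \circ x) \circ z)
= \varphi((x \circ y) \circ z)
\ov{\eqref{Translate-state}}{=} \varphi_{x \circ y}(z)
= i(x \circ y)(z).
\end{align*}
So the two prescriptions define a single bilinear map $T \co \cal{M} \times (\cal{M} \cap \cal{M}_*) \to \cal{M} \cap \cal{M}_*$.

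With compatibility in hand, the interpolation hypotheses are satisfied with $M_0 = M_1 = 1$ on the constant couple $(\cal{M},\cal{M})$ in the first slot and the couple $(\cal{M},\cal{M}_*)$ in the second and target slots. Theorem \ref{Th-bilinear-interpolation} applied at $\theta = 1/p$ yields the contractive bilinear map $\cal{M} \times \L^p(\cal{M}) \to \L^p(\cal{M})$, $(h,k) \mapsto h \circ k$, and by commutativity of $\circ$ the symmetric statement $\L^p(\cal{M}) \times \cal{M} \to \L^p(\cal{M})$ follows. The endpoint cases $p = \infty$ (trivial from \eqref{def-JBstar}) and $p = 1$ (the construction above applied directly) are also covered. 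The main conceptual obstacle is really the compatibility identity $x \circ i(y) = i(x \circ y)$, which is exactly where the hypothesis that $\varphi$ be a trace is used; without it, the Jordan multiplication on $\cal{M}$ does not extend consistently to the left action on $\cal{M}_*$ built from $\varphi$.
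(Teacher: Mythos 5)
Your proof is correct and follows essentially the same route as the paper: you define the left action $\psi \mapsto \psi_x$ on $\cal{M}_*$, verify via the trace identity \eqref{trace} that it is compatible with the Jordan product on $i(\cal{M})$ (i.e.~$(\varphi_y)_x = \varphi_{x\circ y}$), and conclude by bilinear interpolation with the constant couple in the first slot. This matches the paper's argument step for step, including the identification of the tracial hypothesis as the crux of the compatibility check.
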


\begin{proof}
Consider the bilinear maps defined by
\begin{equation}
\label{Def-product-bis-bis}
P_1 \co i(\cal{M}) \times \cal{M}_* \to \cal{M}_*, (\varphi_x,\psi) \to \psi_x
\quad \text{and} \quad
P_0 \co i(\cal{M}) \times i(\cal{M}) \to i(\cal{M}), (\varphi_x,\varphi_y) \to \varphi_{x \circ y}.
\end{equation}
The first map is contractive by Lemma \ref{Lemma-contractivity}. Moreover, for any $x,y \in \cal{M}$, we have
$$
P_1(\varphi_x,\varphi_y)
\ov{\eqref{Def-product-bis-bis}}{=} (\varphi_y)_x.
$$
Since $\varphi$ is a trace, we have for any $z \in \cal{M}$
$$
(\varphi_y)_x(z)
\ov{\eqref{Translate-state}}{=} (\varphi_y)(x \circ z)
\ov{\eqref{Translate-state}}{=}  \varphi(y \circ (x \circ z))
\ov{\eqref{trace}}{=} \varphi((y \circ x) \circ z)
=\varphi((x \circ y) \circ z)
=\varphi_{x \circ y}(z).
$$
So $(\varphi_y)_x=\varphi_{x \circ y}$. The maps \eqref{Def-product-bis-bis} coincide on the product $i(\cal{M}) \times i(\cal{M})$. By bilinear interpolation (Theorem \ref{Th-bilinear-interpolation}), we obtain a contractive bilinear map $\cal{M} \times (\cal{M}_*,\cal{M})_{\frac{1}{p}}=\cal{M} \times \L^{p}(\cal{M}) \to \L^p(\cal{M})$, $(h,k) \mapsto h \circ k$.
\end{proof}

We finish with a version of H\"older's inequality in the \textit{tracial} case.

\begin{prop}
\label{prop-Holder}
Let $\cal{M}$ be a $\JBW^*$-algebra equipped with a normal faithful tracial state $\varphi$. Suppose that $1 \leq p,q,r \leq \infty$ such that $\frac{1}{r}=\frac{1}{p}+\frac{1}{q}$. The product $(x,y) \mapsto x \circ y$ on $\cal{M}$ induces a contractive map $\L^p(\cal{M}) \times \L^q(\cal{M}) \to \L^r(\cal{M})$, $(h,k) \mapsto h \circ k$, i.e.
\begin{equation}
\label{Holder-1}
\norm{h \circ k}_{\L^r(\cal{M})}
\leq \norm{h}_{\L^p(\cal{M})} \norm{k}_{\L^q(\cal{M})}, \quad h \in \L^p(\cal{M}),k \in \L^{q}(\cal{M}).
\end{equation}
\end{prop}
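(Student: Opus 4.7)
The plan is to obtain Hölder's inequality via two successive applications of bilinear interpolation (Theorem \ref{Th-bilinear-interpolation}), bypassing any direct analysis of the pairing between $\L^p(\cal{M})$ and $\L^{p^*}(\cal{M})$. First I would establish the endpoint case $r = 1$, and then bootstrap to arbitrary $r \geq 1$ via reiteration.

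For the endpoint, I would apply Theorem \ref{Th-bilinear-interpolation} to the Jordan product $(x,y) \mapsto x \circ y$ with the crossed source couples $(X_0, X_1) = (\cal{M}, \cal{M}_*)$ and $(Y_0, Y_1) = (\cal{M}_*, \cal{M})$, and trivial target couple $(Z_0, Z_1) = (\cal{M}_*, \cal{M}_*)$. The required boundary estimates $\norm{x \circ y}_{\L^1} \leq \norm{x}_\cal{M} \norm{y}_{\L^1}$ and $\norm{x \circ y}_{\L^1} \leq \norm{x}_{\L^1} \norm{y}_\cal{M}$ are exactly the two halves of Proposition \ref{prop-module} with $p = 1$. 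Since $(\cal{M}, \cal{M}_*)_{1/p} = \L^p(\cal{M})$ and $(\cal{M}_*, \cal{M})_{1/p} = (\cal{M}, \cal{M}_*)_{1-1/p} = \L^{p^*}(\cal{M})$ by the symmetry of complex interpolation, this yields a contractive bilinear extension $\L^p(\cal{M}) \times \L^{p^*}(\cal{M}) \to \L^1(\cal{M})$ for every $1 \leq p \leq \infty$, which is the endpoint Hölder inequality.

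For the general case, I would fix $q \in [1, \infty]$ and apply Theorem \ref{Th-bilinear-interpolation} a second time with source couples $(X_0, X_1) = (\cal{M}, \L^{q^*}(\cal{M}))$ and $(Y_0, Y_1) = (\L^q, \L^q)$, and target couple $(Z_0, Z_1) = (\L^q, \L^1)$. By the nestings $\cal{M} \subset \L^{q^*}$ and $\L^q \subset \L^1$, the relevant intersections reduce to $\cal{M}$ and $\L^q$, and on $\cal{M} \times \L^q$ the two required bounds $\norm{x \circ y}_{\L^q} \leq \norm{x}_\cal{M} \norm{y}_{\L^q}$ and $\norm{x \circ y}_{\L^1} \leq \norm{x}_{\L^{q^*}} \norm{y}_{\L^q}$ come respectively from Proposition \ref{prop-module} and from restricting the endpoint to $x \in \cal{M}$. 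The reiteration theorem \cite[Theorem 4.6.1]{BeL76} identifies $(\cal{M}, \L^{q^*})_\theta$ with $\L^{p_\theta}$ where $1/p_\theta = \theta/q^*$ and $(\L^q, \L^1)_\theta$ with $\L^{r_\theta}$ where $1/r_\theta = (1-\theta)/q + \theta$; an algebraic check gives $1/r_\theta = 1/p_\theta + 1/q$, and as $\theta$ sweeps $[0, 1]$ the exponent $p_\theta$ covers $[q^*, \infty]$, matching precisely the constraint $1/p + 1/q \leq 1$ (equivalently $r \geq 1$) imposed by the hypothesis.

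The main obstacle I expect is selecting the right couples at the endpoint step. A naive bilinear interpolation on $(\cal{M}, \cal{M}_*) \times (\cal{M}, \cal{M}_*)$ fails because the Jordan product is undefined on $\cal{M}_* \times \cal{M}_*$; the \emph{crossed} choice $(\cal{M}, \cal{M}_*) \times (\cal{M}_*, \cal{M})$ is what makes the endpoint accessible directly from the two asymmetric module-action bounds of Proposition \ref{prop-module}, and is the key to avoiding any explicit analysis of the duality pairing for $\L^p$ alongside the use of the trace identity \eqref{trace} hidden inside Proposition \ref{prop-module}.
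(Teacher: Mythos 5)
Your proof is correct and rests on the same two ingredients as the paper's: the module bounds of Proposition \ref{prop-module} and bilinear interpolation (Theorem \ref{Th-bilinear-interpolation}) applied to \emph{crossed} couples. The paper reaches the conclusion in a single interpolation step, taking $(\cal{M},\L^r(\cal{M}))$ and $(\L^r(\cal{M}),\cal{M})$ as source couples and the constant couple $(\L^r(\cal{M}),\L^r(\cal{M}))$ as target, with $\theta$ chosen so that $\frac{1}{q}=\frac{\theta}{r}$ and $\frac{1}{p}=\frac{1-\theta}{r}$; your two-step route (the endpoint $r=1$ first, then a second interpolation with reiteration) is a valid variant of the same idea that merely costs an extra, easily supplied, compatibility check that the endpoint extension $\L^{q^*}(\cal{M})\times\L^q(\cal{M})\to\L^1(\cal{M})$ agrees with the module action of Proposition \ref{prop-module} on $\cal{M}\times\L^q(\cal{M})$.
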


\begin{proof}
Consider the bilinear maps
$$
P_1 \co \cal{M} \times \L^r(\cal{M}) \to \L^r(\cal{M}), (h,k) \mapsto h \circ k
\quad \text{and} \quad
P_0 \co \L^r(\cal{M}) \times \cal{M} \to \L^r(\cal{M}), (h,k) \mapsto h \circ k.
$$
These maps coincide on the product $i(\cal{M}) \times i(\cal{M})$ with the map $(\varphi_x,\varphi_y) \mapsto \varphi_{x \circ y}$. Now, we choose $0 \leq \theta \leq 1$ such that $\frac{1}{q} = \frac{\theta}{r}$. We have $\frac{1}{p}=\frac{1-\theta}{r}$. By bilinear interpolation (Theorem \ref{Th-bilinear-interpolation}), we obtain a contractive bilinear map $(h,k) \mapsto h \circ k$ from $(\cal{M},\L^r(\cal{M}))_\theta \times (\L^r(\cal{M}),\cal{M})_\theta = \L^p(\cal{M}) \times \L^q(\cal{M})$ into $\L^r(\cal{M})$.
\end{proof}


Let $\cal{M}$ be a $\JBW^*$-algebra with product $(x,y) \mapsto x \circ y$. By essentially \cite[Theorem 5.1.29 p.~9]{CGRP18} \cite[Proposition 3.8.2 p.~91]{HOS84}, the selfadjoint part $A \ov{\mathrm{def}}{=} \cal{M}_\sa$ of the $\JBW^*$-algebra $\cal{M}$ is a $\JBW$-algebra. Suppose that $\tau$ is a normal finite faithful trace on $A$. We define an extension of $\tau$ on $\cal{M}$ by letting $\tau(x+\i y) \ov{\mathrm{def}}{=} \tau(x)+\i\tau(y)$ for any $x,y \in A$. By \cite[(5.1.2) p.~9]{CGRP18} \cite[Lemma 3.1]{HKPP20}, for any $x \in \cal{M}$ we have $x^* \circ x \geq 0$. Using the fact \cite[Proposition 3.4.1 p.~359]{CGRP14} that the closed subalgebra of a unital $\JB^*$-algebra generated by a selfadjoint element and 1 is a commutative $\mathrm{C}^*$-algebra, we can use functional calculus for defining
\begin{equation}
\label{norm-LpNA}
\norm{x}_{p}
\ov{\mathrm{def}}{=} \big(\tau \big[(x^* \circ x)^{\frac{p}{2}}\big]\big)^{\frac{1}{p}}, \quad x \in \cal{M}
\end{equation}
where $1 \leq p < \infty$, extending the norm 
\begin{equation}
\label{norm-Iochum}
\norm{x}_{\L^{p,I}(A,\tau)}
=\big(\tau\big[ |x|^{p}\big]\big)^{\frac{1}{p}}, \quad x \in A
\end{equation}
introduced in \cite[p.~139]{Ioc84} \cite[Definition 2]{Ioc86} on the $\JBW$-algebra $A$, where $|x| \ov{\mathrm{def}}{=} (x^2)^{\frac{1}{2}} \in A_+$. We conjecture that the definition \eqref{norm-LpNA} defines a norm and that the completion is isometric to the Banach space defined in Definition \ref{def-nonasso-Lp} (the case $p=2$ is Proposition \ref{L2-nonassociative}). In this case, this Banach space would contain isometrically the nonassociative $\L^p$-space $\L^{p,I}(A,\tau)$ of Iochum defined in \cite[Definition 4]{Ioc86}, as the completion of $A$ for the norm \eqref{norm-Iochum}. See also \cite{Abd83}. Note that $\L^{p,I}(A,\tau)$ is \textit{real} Banach space and that the space of real linear combination of pairwise orthogonal projections of $A$ is dense in the Banach space $\L^{p,I}(A,\tau)$. This last observation is also true for $p=\infty$ by \cite[Proposition 4.2.3 p.~99]{HOS84}.  


\begin{remark} \normalfont 
If $p=2$, we have the following link between the two norms \eqref{norm-LpNA} and \eqref{norm-Iochum}. If $a+\i b$ belongs to the $\JBW^*$-algebra $A+\i A=\cal{M}$ we have
\begin{align*}
\MoveEqLeft
\norm{a+\i b}_{2}         
\ov{\eqref{norm-LpNA}}{=} \big[\tau \big((a+\i b)^* \circ (a+\i b)\big)\big]^{\frac{1}{2}} 
=\big[\tau \big( a \circ a +\i a \circ  b-\i b \circ a +b \circ b  \big)\big]^{\frac{1}{2}}\\
&=\big[\tau( a \circ a +b \circ b)\big]^{\frac{1}{2}}=\big[\tau(a \circ a) +\tau(b \circ b)\big]^{\frac{1}{2}} \ov{\eqref{norm-Iochum}}{=} \big[\norm{a}_{\L^{2,I}(A,\tau)}^2 +\norm{b}_{\L^{2,I}(A,\tau)}^2\big]^{\frac{1}{2}}.
\end{align*}
\end{remark}




Now, we will connect our spaces to the nonassociative $\L^p$-spaces $\L^{p,I}(A,\tau)$ of Iochum. We need the real polar decomposition \cite[p.~13]{Top65} of an arbitrary element $x$ of $\JBW$-algebra $A$. We recall the short argument since it is not available in the literature and since \cite[p.~13]{Top65} only states the result for $\JW$-algebras. Let $\W(x)$ be the $\JBW$-algebra generated by $x$. By \cite[Lemma 4.1.11 p.~97]{HOS84} \cite[Proposition 2.11 p.~41]{AlS03}, it is associative and isomorphic to a monotone complete algebra $\mathrm{C}_\R(X)$ of real continuous functions on a compact Hausdorff space $X$. Recall that an ordered vector space $Z$ is monotone complete if for each bounded increasing net $(a_i)$ of $Z$ has a least upper bound $a$ in $Z$. By \cite[Proposition 1.7 p.~104]{Tak02}, this condition on the algebra $\mathrm{C}_\R(X)$ is equivalent to $X$ being extremely discontinuous. Let $f$ be a function of $\mathrm{C}_\R(X)$ and let
$$
\sgn(f)
\ov{\mathrm{def}}{=}\begin{cases}
1& \text{ if } f(x) \geq 0 \\
-1& \text{ if } f(x) < 0
\end{cases}.
$$ 
We have the equality $f=\sgn(f)|f|$ with $\sgn(f)^2=1$. So by a functional calculus argument,  there exists a symmetry $s \in A$ (i.e.~$s^2=1$) such that 
\begin{equation}
\label{real-polar}
x=s \circ |x|.
\end{equation}

Note that in the following statement, we have a canonical inclusion of $A$ in the space $\L^p(\cal{M},\tau)$ since $A$ is a real subspace of the $\JBW^*$-algebra $\cal{M}$.

\begin{thm}
\label{Th-Iochum}
Let $A$ be a $\JBW$-algebra equipped with a normal finite faithful trace $\tau$ and $\cal{M}$ be the $\JBW^*$-algebra associated to $A$. Suppose that $1 \leq p \leq \infty$. The canonical inclusion $A \subset\L^p(\cal{M},\tau)$ induces an isometry $\L^{p,I}(A,\tau) \xhookrightarrow{} \L^p(\cal{M},\tau)$.
\end{thm}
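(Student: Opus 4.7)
The plan is to show that the Iochum norm \eqref{norm-Iochum} and the interpolation norm agree on $A$; density of $A$ in $\L^{p,I}(A,\tau)$ will then yield the desired isometry. The case $p=\infty$ is immediate from the identifications $\L^{\infty,I}(A,\tau)=A$ and $\L^\infty(\cal{M},\tau)=\cal{M}$ together with \eqref{norm-VU}, so I focus on $1\leq p<\infty$.

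Fix $x\in A$. My first step is to reduce to the commutative case by considering the $\JBW$-subalgebra $\W(x)\subseteq A$ generated by $x$ and $1$. As recalled in the paragraph preceding \eqref{real-polar}, $\W(x)$ is associative and monotone complete, hence isomorphic as a $\JBW$-algebra to $\mathrm{C}_\R(K)$ for some extremally disconnected compact Hausdorff space $K$. Its complexification $B_\C \ov{\mathrm{def}}{=} \W(x)+\i\,\W(x)$ is weak${}^*$-closed in $\cal{M}$ (the real and imaginary parts $\frac{1}{2}(z+z^*)$ and $\frac{1}{2\i}(z-z^*)$ depend weak${}^*$-continuously on $z$), and is therefore a unital abelian sub-$\JBW^*$-algebra of $\cal{M}$, canonically identified with the commutative von Neumann algebra $L^\infty(K,\mu)$, where $\mu$ is the finite measure representing $\tau|_{B_\C}$.

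By Proposition \ref{prop-Jordan-conditional-expectation-existence-bis} there is a trace-preserving normal faithful Jordan conditional expectation $\E\co\cal{M}\to\cal{M}$ with range $B_\C$, which is contractive since unital positive maps on $\JB^*$-algebras are. Applying Proposition \ref{prop-proj-Lp} to $\E$ produces a contractive projection $\E_p$ on $\L^p(\cal{M},\tau)$ whose range is isometric to $\L^p(B_\C,\tau|_{B_\C})$ via the canonical inclusion (cf.~Lemma \ref{Lemma-interpolation}). Hence for every $y\in B_\C$ viewed inside $\cal{M}\subseteq\L^p(\cal{M},\tau)$,
$$
\norm{y}_{\L^p(\cal{M},\tau)}=\norm{y}_{\L^p(B_\C,\tau|_{B_\C})}.
$$
Since $B_\C$ is commutative, the tracial identification of Example \ref{Ex-AVN} gives $\L^p(B_\C,\tau|_{B_\C})=L^p(K,\mu)$ isometrically. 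Under $B_\C\cong L^\infty(K,\mu)$, the element $x\in A\cap B_\C$ corresponds to a real function $f\in\mathrm{C}_\R(K)$, and by functional calculus in $\W(x)$ the Jordan absolute value $|x|=(x^2)^{1/2}$ corresponds to $|f|$, so $|x|^p$ corresponds to $|f|^p$. Therefore
$$
\norm{x}_{\L^p(B_\C,\tau|_{B_\C})}=\bigg(\int_K |f|^p\,\d\mu\bigg)^{\!\frac{1}{p}}=\big(\tau(|x|^p)\big)^{\frac{1}{p}}\ov{\eqref{norm-Iochum}}{=}\norm{x}_{\L^{p,I}(A,\tau)}.
$$
Combining the two displays gives $\norm{x}_{\L^p(\cal{M},\tau)}=\norm{x}_{\L^{p,I}(A,\tau)}$ for all $x\in A$, and density concludes.

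The main subtlety I expect is the compatibility of the identifications: one must know that the isomorphism $\L^p(B_\C,\tau|_{B_\C})\cong\E_p(\L^p(\cal{M},\tau))$ provided by Proposition \ref{prop-proj-Lp} sends $y\in B_\C$ to $y$ itself viewed as an element of $\L^p(\cal{M},\tau)$, rather than to some twisted image. This is precisely what Lemma \ref{Lemma-interpolation} asserts, since the isomorphism there is implemented by the canonical inclusion $B_\C\hookrightarrow\cal{M}+\cal{M}_*$. A minor matter of normalization of $\tau$ in order to apply Proposition \ref{prop-Jordan-conditional-expectation-existence-bis} is harmless, as both norms scale by the same factor $\tau(1)^{-\frac{1}{p}}$.
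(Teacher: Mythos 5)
Your proof is correct, but it takes a genuinely different route from the paper's. The paper argues directly with the interpolation functor: for $x$ a real linear combination of orthogonal projections with real polar decomposition $x=s\circ|x|$, it builds the explicit analytic family $f_\epsi(z)=\e^{\epsi(z^2-\theta^2)}\,s\circ|x|^{zp}$ to get $\norm{x}_{(\cal{M},\cal{M}_*)_{1/p}}\leq\norm{x}_{\L^{p,I}(A,\tau)}$, and obtains the reverse inequality by a duality argument combining H\"older's inequality (Proposition \ref{prop-Holder}), Iochum's identification $\L^{1,I}(A,\tau)=A_*$ and the Hadamard three-lines theorem. You instead reduce to the associative case by passing to the singly generated subalgebra $\W(x)$, its abelian $\JBW^*$-complexification $B_\C\cong\L^\infty(K,\mu)$, a trace-preserving Jordan conditional expectation onto $B_\C$ (Proposition \ref{prop-Jordan-conditional-expectation-existence-bis}), and the complemented-subspace lemma via Proposition \ref{prop-proj-Lp}, after which the norm computation is classical measure theory. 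Your route is shorter, avoids the approximation by step elements and the use of H\"older's inequality, and yields the norm identity on all of $A$ at once; the price is that it leans on two facts you should make explicit: (i) the contractivity of the unital positive map $Q$ on the complex algebra $\cal{M}$ requires a Russo--Dye type theorem for $\JB^*$-algebras (Wright--Youngson), not just the order argument valid on $\cal{M}_\sa$; and (ii) the compatibility of the identifications in Lemma \ref{Lemma-interpolation} rests on the relation $Q_*(\varphi_x)=\varphi_{Q(x)}$, i.e.\ $\tau(x\circ Q(y))=\tau(Q(x)\circ y)$, which does hold for the tracial Jordan conditional expectation by \eqref{Def-J-cond-exp} but deserves a line of verification (it is exactly the kind of identification the paper leaves to the reader in Proposition \ref{Prop-complementation-1}). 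The paper's argument, by contrast, stays entirely within the interpolation machinery it has already set up and needs no conditional expectations.
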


\begin{proof}
The case $p=\infty$ is obvious. In the sequel, we will use the notation $\L^p(\cal{M})$ for $\L^p(\cal{M},\tau)$. So, we can assume that $1 \leq p < \infty$. Let $\theta \ov{\mathrm{def}}{=} \frac{1}{p}$ and $\epsi>0$. Let $x \in A$ such that
\begin{equation}
\label{alphago}
\norm{x}_{\L^{p,I}(A,\tau)} 
\leq 1
\end{equation} 
with real polar decomposition $x = s \circ |x|$. We suppose that $x$ is a real linear combination of pairwise orthogonal projections $e_1,\ldots,e_n \in A$. We have $|x|=\sum_{k=1}^n \lambda_k e_k$ where $\lambda_1,\ldots,\lambda_n \in \R^+_*$. Recall that the closed subalgebra of a $\JB^*$-algebra generated by a selfadjoint element and 1 is a commutative $\mathrm{C}^*$-algebra, see \cite[Proposition 3.4.1 p.~359]{CGRP14}. For any $z \in \mathbb{C}$ with $0 \leq \Re z \leq 1$, we consider the element 
\begin{equation}
\label{f-epsi}
f_\epsi(z) 
\ov{\mathrm{def}}{=} \e^{\epsi(z^2-\theta^2)} s \circ |x|^{zp}
\end{equation}
of $\cal{M}$. Then $f_\epsi|_{S}$ is holomorphic and $f_\epsi$ is continuous and bounded on the closed strip $\ovl{S}$. Moreover, the functions $\mathbb{R} \to \L^{\infty}(\cal{M})$, $t \mapsto f_\epsi(\i t)$ and $\mathbb{R} \to \L^{1}(\cal{M})$, $t \mapsto f_\epsi(1+\i t)$ are continuous. Note that \eqref{def-JB} implies that $\norm{s}_{\L^{\infty}(\cal{M})}=\norm{s}_{A} \leq 1$.  Moreover, for any $t \in \R$, we have the estimates
\begin{align*}
\MoveEqLeft
\norm{f_\epsi(\i t)}_{\L^{\infty}(\cal{M})}        
\ov{\eqref{f-epsi}}{=}  \bnorm{\e^{\epsi(-t^2-\theta^2)} s \circ |x|^{\i t p}}_{\L^{\infty}(\cal{M})} 
=\e^{\epsi(-t^2-\theta^2)} \bnorm{ s \circ |x|^{\i t p}}_{\L^{\infty}(\cal{M})} \\
&\leq \e^{\epsi(-t^2-\theta^2)} \norm{s}_{\L^{\infty}(\cal{M})} \bnorm{|x|^{\i t p}}_{\L^{\infty}(\cal{M})}
\leq 1
\end{align*}
and using the isometry $\L^{1,I}(A,\tau)=A_*$ of \cite[Theorem 5]{Ioc86} in the last equality
\begin{align*}
\MoveEqLeft
\norm{f_\epsi(1+\i t)}_{\L^{1}(\cal{M})}        
=\bnorm{\e^{\epsi((1+\i t)^2-\theta^2)} s \circ |x|^{p(1+\i t)}}_{\L^{1}(\cal{M})} 
=|\e^{\epsi(1-t^2+2\i t-\theta^2)}| \bnorm{s \circ |x|^{p(1+\i t)}}_{\L^{1}(\cal{M})} \\
&\ov{\eqref{Holder-1}}{\leq} \e^{\epsi(1-t^2-\theta^2)} \bnorm{|x|^{p}}_{\L^{1}(\cal{M})} 
\ov{\eqref{norm-VU}}{=} \e^{\epsi(1-t^2-\theta^2)} \bnorm{|x|^{p}}_{\L^{1,I}(A,\tau)}
\ov{\eqref{norm-Iochum}\eqref{alphago}}{\leq} \e^{\epsi(1-\theta^2)}.
\end{align*}
These computations shows that $t \mapsto f_\epsi(\i t)$ and $t \mapsto f_\epsi(1+\i t)$ tend to 0 when $|t|$ goes to $\infty$. We deduce that
$$
\norm{f_\epsi}_{\cal{F}(\L^{1}(\cal{M}),\L^{\infty}(\cal{M}))}
\ov{\eqref{norm-funct-inter}}{\leq} \e^{\epsi(1-\theta^2)}.
$$
Since $f_\epsi(\theta) = s \circ |x|= x$, we infer that $x$ belongs to the interpolation space $(\L^{\infty}(\cal{M}),\L^{1}(\cal{M}))_\theta$ and that $\norm{x}_\theta \leq \e^{\epsi(1-\theta^2)}$. Letting $\epsi \to 0$, we obtain $\norm{x}_\theta \leq 1$. Then by homogeneity and by density in $\L^{p,I}(A,\tau)$, we obtain a contractive inclusion $\L^{p,I}(A,\tau) \subset (\L^{\infty}(\cal{M}),\L^{1}(\cal{M}))_\theta$.

Consider some $x \in A$ such that $\norm{x}_\theta < 1$. Let $y \in A$ with $\norm{y}_{\L^{p^*,I}(A,\tau)} \leq 1$ with real polar decomposition $y = s \circ |y|$. We will prove that
$$
|\tau(y \circ x)| 
\leq 1.
$$ 
By approximation, we can suppose that $|y|=\sum_{k=1}^n \lambda_k e_k$ is a linear combination of pairwise orthogonal projections $e_1,\ldots,e_n \in A$ where $\lambda_1,\ldots,\lambda_n \in \R^+_*$. 
Since $\norm{x}_\theta < 1$, there exists a function $f \in \mathscr{F}(\L^{\infty}(\cal{M}), \L^{1}(\cal{M}))$ such that $f(\theta) = x$ and $
\norm{f}_{\mathscr{F}(\L^{\infty}(\cal{M}),\L^{1}(\cal{M}))}
\leq 1$. For any $z \in \mathbb{C}$ with $0 \leq \Re z \leq 1$, we define
\begin{equation}
\label{def-g-h}
g(z) 
\ov{\mathrm{def}}{=} \e^{\epsi(z^2-\theta^2)} s \circ |y|^{p^*(1-z)}
\quad \text{and} \quad
h(z) 
\ov{\mathrm{def}}{=} \tau(g(z) \circ f(z)).
\end{equation}
Then clearly, the function $h \co \ovl{S} \to \mathbb{C}$ is continuous and bounded on the closed strip $\ovl{S}$ and analytic on the open strip $S$. Moreover, using the isometry $\L^{1,I}(A,\tau)=A_*$ of \cite[Theorem 5]{Ioc86} in the last equality, we have for any $t \in \R$
\begin{align*}
\MoveEqLeft
|h(\i t)| 
\ov{\eqref{def-g-h}}{=} |\tau(g(\i t) \circ f(\i t))|
\ov{\eqref{Translate-state}}{=} |\tau_{g(\i t)}(f(\i t))|
\leq \norm{\tau_{g(\i t)}} \norm{f(\i t)}_{\L^\infty(\cal{M})} \\
&= \norm{g(\i t)}_{\L^1(\cal{M})} \norm{f(\i t)}_{\L^\infty(\cal{M})}
= \e^{\epsi(-t^2-\theta^2)} \bnorm{s \circ |y|^{p^*(1-\i t)}}_{\L^1(\cal{M})} \\
&\ov{\eqref{Holder-1}}{\leq} \e^{\epsi(-t^2-\theta^2)} \bnorm{|y|^{p^*}}_{\L^1(\cal{M})} 
\ov{\eqref{norm-VU}}{=} \e^{\epsi(-t^2-\theta^2)} \bnorm{|y|^{p^*}}_{\L^{1,I}(A,\tau)}
\leq 1 
\end{align*}
and similarly
\begin{align*}
\MoveEqLeft
|h(1 + \i t)| 
\ov{\eqref{def-g-h}}{=} |\tau(g(1 + \i t) \circ f(1 + \i t))| 
\leq \norm{g(1 + \i t)}_{\L^{\infty}(\cal{M})} \norm{f(1 + \i t)}_{\L^{1}(\cal{M})} \\
&\ov{\eqref{def-g-h}}{=} \e^{\epsi(1-t^2-\theta^2)}\norm{s \circ |y|^{-p^*\i t}}_{\L^{\infty}(\cal{M})} \norm{f(1 + \i t)}_{\L^{1}(\cal{M})} 
\leq \e^{\epsi(1-\theta^2)}.
\end{align*}
Furthermore, we have the equality
$$
h(\theta)
\ov{\eqref{def-g-h}}{=} \tau(g(\theta) \circ f(\theta))
=\tau(yx).
$$
The Hadamard three-lines theorem (Proposition \ref{prop-three-lined}) 
gives the estimate $|h(\theta)| \leq \e^{\epsi(1-\theta^2)\theta}$, i.e.~$|\tau(yx)| \leq \e^{\epsi(1-\theta^2)\theta}$. Letting $\epsi \to 0$ we obtain $|\tau(yx)| \leq 1$. Taking the supremum over all $y$, we get the inequality $\norm{x}_{ \L^{p,I}(A,\tau)} \leq 1$.
\end{proof}

\section{Embeddings of nonassociative $\L^p$-spaces in noncommutative $\L^p$-spaces}
\label{Sec-nonassociative-Lp-spaces}

In this part, we will show that tracial nonassociative $\L^p$-spaces from $\JW^*$-factors arise as positively 1-complemented subspaces of noncommutative $\L^p$-spaces. We will use a proof by cases relying on the property of the associated $\JW$-algebra.  By the well-known decomposition result \cite[Theorem 6.4]{Sto66} of arbitrary $\JW$-algebras, it suffices to examine the case of selfadjoint parts of von Neumann algebras (which is obvious), the case of purely real $\JW$-factors and the one of $\JW$-algebras of type $\I_2$. We start with the purely real case.

\paragraph{Purely real $\JW$-algebras} A real von Neumann algebra \cite[p.~15]{ARU97} is a real unital $*$-subalgebra $R$ of $\B(H)$ which is weak operator closed satisfying $R \cap \i R=\{0\}$. Moreover, the (complex) von Neumann generated by $R$ is given by $R''=R+\i R$ and it is obvious that its selfadjoint part $A \ov{\mathrm{def}}{=} \{ x \in R : x^*=x \}$ is a reversible $\JW$-algebra with associated $\JW^*$-algebra $\cal{M}$. One can show \cite[pp.~21-22]{ARU97} or \cite[Lemma 3.2]{Sto68} that the map $\alpha \co A'' \to A''$, $z+\i y \mapsto z^*+\i y^*$ is a $*$-antiautomorphism of order 2 and it is easy to check that
\begin{equation}
\label{}
R=\big\{x \in R'' : \alpha(x)=x^* \big\}, 
\quad A
=\big\{x \in R'' : \alpha(x)=x=x^*\big\}
\end{equation}
and
$$
\cal{M}=\big\{x \in R'' : \alpha(x)=x\big\}.
$$
Actually, defining a real von Neumann algebra is equivalent to defining an involutory $*$-anti-automorphism of a von Neumann algebra. 
Finally, the map $P_\can \ov{\mathrm{def}}{=} \frac{\Id+\alpha}{2} \co A'' \to A''$ is clearly a positive contractive normal unital projection and we said that it is the canonical projection of $A''$ onto $\cal{M}$.

Given a $\JW$-algebra $A$, we denote by $\kR(A)$ the closure for the weak* topology of the real algebra generated by $A$ in $\B(H)$ (note that this algebra is closed under adjoints and  sometimes denoted $\ovl{\kR(A)}$ in the literature). If $A$ is a reversible then 
\begin{equation}
\label{}
A=\kR(A)_\sa
\quad \text{and} \quad 
A''=\kR(A)+\i \kR(A)
\end{equation}
by \cite[Lemma 4.25 p.~113]{AlS03}, \cite[Theorem 1.1.5 p.~17]{ARU97} and \cite[Theorem 2.4]{Sto68}. A reversible $\JW$-algebra $A$ is said to be purely real \cite[p.~15]{ARU97} if $\kR(A)$ is a real von Neumann algebra, i.e.~$\kR(A) \cap \i\kR(A) =\{0\}$. 


We will use the following property \cite[Proposition 1.5.1 p.~49]{ARU97}.

\begin{prop}
\label{prop-Ayu151}
A purely real $\JW$-factor $A$ is not isomorphic to the selfadjoint part of a von Neumann algebra if and only if the von Neumann algebra $A''$ is a factor.
\end{prop}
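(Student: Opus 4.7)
The plan is to prove both implications by exploiting the involutive $*$-antiautomorphism $\alpha$ on $A''$ and its action on the center $Z(A'')$.

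First I would handle the contrapositive of the forward direction: if $A''$ is not a factor, then $A$ is isomorphic to the selfadjoint part of a von Neumann algebra. The key observation is that $\alpha$ cannot have nontrivial fixed points in $Z(A'')_{\sa}$: if $z \in Z(A'')$ satisfies $z = z^*$ and $\alpha(z) = z$, then for $x, y \in A$ a direct computation using centrality of $z$ shows $(x \circ z) \circ y = x \circ (z \circ y)$, so $z \in Z(A) = \R 1$ since $A$ is a factor. Given any nontrivial central projection $p \in Z(A'')$, the projection $p \alpha(p)$ is $\alpha$-fixed (using $\alpha(\alpha(p)) = p$ and that $\alpha$ reverses order on commuting elements), hence scalar, hence zero; similarly $p + \alpha(p)$ is a fixed projection, hence equal to $1$. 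So $\alpha(p) = 1 - p$ and we obtain a decomposition $A'' = A''p \oplus A''(1-p)$ with $\alpha$ swapping the two summands. I would then verify that the map
\begin{equation*}
\Phi \co A \to (A''p)_{\sa}, \quad x \longmapsto xp
\end{equation*}
is a $\JW$-isomorphism: well-definedness uses that $p$ is central and selfadjoint, injectivity follows from $\alpha(xp) = x(1-p)$, surjectivity comes from $y \mapsto y + \alpha(y)$, and compatibility with the Jordan product is a straightforward computation using centrality of $p$.

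For the reverse direction, assume $A''$ is a factor and suppose for contradiction that $A \cong N_{\sa}$ for some von Neumann algebra $N$ (necessarily a factor since $A$ is). The idea is that $N_{\sa}$ carries an associative product making the Jordan product $x \circ y = \frac{1}{2}(xy+yx)$ come from an honest associative multiplication, and this extra structure produces an internal splitting incompatible with $A''$ being a factor. Concretely, the universal enveloping real $W^*$-algebra of the reversible $\JW$-algebra $N_{\sa}$ is $N \oplus N^{\op}$ with the canonical involutory $*$-antiautomorphism swapping the two summands (this is a standard fact about the universal enveloping algebra of a reversible $\JC^*$-algebra, see \cite{ARU97}). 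Transporting this structure along the isomorphism $A \cong N_{\sa}$ and using the universal property to factor through $\kR(A)$ would yield a nontrivial central projection in $A''$, contradicting the factor assumption.

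The main obstacle is the second direction: one must relate the abstract universal enveloping algebra of $A$ (as a $\JW$-algebra) to the concrete von Neumann algebra $A''$ generated by $A$ in $\B(H)$, which is representation-dependent. The cleanest route is to invoke the universal property of the enveloping algebra of a reversible $\JW$-algebra together with the fact that, in the purely real case, $A'' = \kR(A) + \i\kR(A)$ is the complexification of the canonical real envelope, so that any factorization of the enveloping algebra passes to $A''$. This reduction, combined with Størmer's structure theorem for reversible $\JW$-factors \cite{Sto66}, closes the argument.
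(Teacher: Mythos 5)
The paper offers no proof of this statement to compare against: it is quoted directly from \cite[Proposition 1.5.1 p.~49]{ARU97}. Judged on its own, the first half of your argument (the contrapositive of the forward direction) is correct and essentially complete: every selfadjoint $\alpha$-fixed element of the centre of $A''$ lies in $\Zc(A)=\R 1$, so a nontrivial central projection $p$ satisfies $p\alpha(p)=0$ and $p+\alpha(p)=1$, and the map $x\mapsto xp$ is then a Jordan isomorphism of $A$ onto $(A''p)_{\sa}$.

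The reverse direction, however, has a genuine gap. The universal property gives a normal surjective $*$-homomorphism $q\co N\oplus N^{\op}\to A''$ extending the Jordan embedding $N_{\sa}\cong A\hookrightarrow A''$, but nothing in your sketch prevents $q$ from annihilating one of the two summands; in that case $A''\cong N$ carries \emph{no} nontrivial central projection, so the advertised contradiction with the factor assumption never materialises. (Since $A''$ is a factor and the weak*-closed ideals of $N\oplus N^{\op}$, for $N$ a factor, are only the four obvious ones, this degenerate case is precisely the one that must be excluded.) Pure reality has to do real work here, and invoking it only to say that $A''$ is the complexification of $\kR(A)$ does not control $\ker q$. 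Two ways to repair the step: (a) use the canonical involutive $*$-antiautomorphism $\alpha$ of $A''$ fixing $A$ pointwise --- this is exactly where pure reality enters --- together with the uniqueness clause of the universal property to get $\alpha\circ q\circ\beta=q$, where $\beta$ is the flip of $N\oplus N^{\op}$; then $\ker q$ is $\beta$-invariant, hence trivial, so $A''\cong N\oplus N^{\op}$ fails to be a factor; or (b) note that if $q$ kills a summand then $A+\i A=A''$ is a weakly closed associative $*$-algebra with $A=(A'')_{\sa}$, and for a noncommutative von Neumann algebra $M$ one has $ab-ba\in\kR(M_{\sa})$ and $-\i(ab-ba)\in M_{\sa}\subset\kR(M_{\sa})$, so $\kR(M_{\sa})\cap\i\kR(M_{\sa})\neq\{0\}$, contradicting pure reality. (The abelian case $A''=\C$, $A=\R=\C_{\sa}$ is a genuine exception to the literal statement and must be excluded by convention.) Either route makes the argument work, but as written the crucial non-degeneracy of $q$ is asserted rather than proved.
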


\begin{remark} \normalfont
The property of being purely real is not an invariant under isomorphisms. See \cite[p.~1427]{Ayu87}.
\end{remark}

The following is a particular case of \cite[Corollary 1.2.10 p.~35]{ARU97}. 

\begin{prop}
\label{prop-Ayu1210}
Let $A$ be a purely real $\JW$-algebra equipped with a normal trace $\tau$. Then the trace $\tau$ can be extended to a normal trace $\tau_1$ on the von Neumann algebra $A''$. If $\tau$ is faithful (respectively finite or semifinite) then $\tau_1$ is also faithful (respectively finite or semifinite).
\end{prop}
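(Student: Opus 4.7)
The plan is to construct $\tau_1$ in two stages, following the general principle for traces on real von Neumann algebras: first extend $\tau$ from the $\JW$-algebra $A=\kR(A)_\sa$ to a normal trace $\tilde\tau$ on the real von Neumann algebra $\kR(A)$, then complexify $\tilde\tau$ to a normal trace $\tau_1$ on $A''=\kR(A)+\i\kR(A)$.

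For the first stage, observe that $A_+=\kR(A)_+$, so $\tau$ is already defined on $\kR(A)_+$; the point is to verify the trace identity $\tau(a^*a)=\tau(aa^*)$ for every $a\in\kR(A)$. Writing $a=s+t$ with $s=\frac{a+a^*}{2}\in A$ and $t=\frac{a-a^*}{2}\in\kR(A)_{\mathrm{skew}}$, a short computation gives $a^*a-aa^*=2[s,t]$ in the associative sense, so it suffices to establish
\[
\tau([s,t])=0\quad\text{for every }s\in A\text{ and every }t\in\kR(A)_{\mathrm{skew}}.
\]
The starting point is the associative-bracket form of the Jordan trace identity: a direct computation shows that $4\bigl((s\circ u)\circ v-s\circ(u\circ v)\bigr)=[u,[s,v]]$ for $s,u,v\in A$, and the left-hand side lies in $A$ with vanishing $\tau$ by \eqref{Def-trace}. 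Hence $\tau([u,[s,v]])=0$, so $\tau$ annihilates $[u,w]$ whenever $u\in A$ and $w$ lies in the real span $[A,A]\ov{\mathrm{def}}{=}\mathrm{span}_\R\{[s,v]:s,v\in A\}\subset\kR(A)_{\mathrm{skew}}$. Because $A$ is reversible (a consequence of the purely real hypothesis), the weak* closure of $[A,A]$ is all of $\kR(A)_{\mathrm{skew}}$: reversibility forces the real $*$-algebra generated by $A$ to split at each product length into a Jordan-symmetric piece in $A$ and a bracket piece in $\ovl{[A,A]}$ (weak* closure), with iterated commutators reduced modulo $[A,A]$ via Jacobi-type identities. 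Normality of $\tau$ then extends $\tau([u,\cdot])=0$ from $[A,A]$ to all of $\kR(A)_{\mathrm{skew}}$, yielding the key identity, and $\tilde\tau(a)\ov{\mathrm{def}}{=}\tau(\frac{a+a^*}{2})$ defines a normal trace on $\kR(A)$ extending $\tau$.

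For the second stage, let $\tau_1(x+\i y)\ov{\mathrm{def}}{=}\tilde\tau(x)+\i\tilde\tau(y)$ for $x,y\in\kR(A)$. The trace identity $\tau_1(zw)=\tau_1(wz)$ on $A''$ follows from $\tilde\tau(ab)=\tilde\tau(ba)$ by expanding the decompositions $z=x+\i y$, $w=u+\i v$ and comparing real and imaginary parts. Positivity on $A''_+$ is checked by writing $p\in A''_+$ as $p=a+\i b$ with $a,b\in\kR(A)$; the condition $p=p^*$ forces $a\in A$ and $b\in\kR(A)_{\mathrm{skew}}$, so $\tau_1(p)=\tau(a)$, and $\tau(a)\geq 0$ because $a=\frac{1}{2}(p+\alpha(p))$ is the average of two positive elements of $A''$ lying in the fixed-point algebra $\cal{M}$, hence in $A_+$. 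Normality of $\tau_1$ descends from that of $\tilde\tau$ and the weak* continuity of $\alpha$. Preservation of the three additional properties is then routine: faithfulness follows from the same formula since $p\neq 0$ implies $P_\can(p)\in A_+$ is nonzero; finiteness is immediate from $\tau_1(1)=\tau(1)$; semifiniteness is obtained, for a nonzero $p\in A''_+$, by using semifiniteness of $\tau$ on $A$ to find a nonzero spectral projection of $P_\can(p)\in A_+$ with finite $\tau$-value, and then cutting $p$ down by this projection inside $A''$. The main obstacle is the identity $\tau([s,t])=0$, and inside it the density step showing that iterated brackets of elements of $A$ exhaust $\kR(A)_{\mathrm{skew}}$ in the weak* topology; this is precisely where reversibility, and hence the purely real hypothesis, enters in an essential way.
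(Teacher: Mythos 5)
You should first be aware that the paper offers no proof of this proposition: it is quoted verbatim as a particular case of \cite[Corollary 1.2.10 p.~35]{ARU97}, so there is no internal argument to compare yours against. Your two-stage architecture (extend $\tau$ from $A=\kR(A)_\sa$ to the real von Neumann algebra $\kR(A)$, then complexify to $A''=\kR(A)+\i\kR(A)$), the reduction of the first stage to the single identity $\tau([s,t])=0$ for $s\in A$ and $t\in\kR(A)_{\mathrm{skew}}$, the identity $4\bigl((s\circ u)\circ v-s\circ(u\circ v)\bigr)=[u,[s,v]]$, and the verification of positivity, faithfulness, finiteness and normality of $\tau_1$ via $P_{\can}$ are all sound (modulo the usual domain caveats when $\tau$ is merely semifinite, where \eqref{Def-trace} and expressions like $\tau([s,t])$ only make sense on the definition ideal).

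The genuine gap is the density step, which is where the entire difficulty of the proposition is concentrated. You assert that reversibility forces the weak* closure of $\mathrm{span}_\R[A,A]$ to be all of $\kR(A)_{\mathrm{skew}}$, "with iterated commutators reduced modulo $[A,A]$ via Jacobi-type identities." This attribution to reversibility alone is false: $A=\M_n(\C)_\sa$ is reversible, $\kR(A)=\M_n(\C)$ as a real algebra, $\kR(A)_{\mathrm{skew}}=\i\cdot\M_n(\C)_\sa$ has real dimension $n^2$, yet $\mathrm{span}_\R[A,A]$ consists only of trace-zero skew-Hermitian matrices and has dimension $n^2-1$; so the density fails for a reversible algebra, and the purely real hypothesis must enter the density argument itself, which in your write-up it never does. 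Moreover the reduction you invoke is not a Jacobi-type reduction: already at length three one finds, modulo $\mathrm{span}_\R[A,A]$, that $abc-cba\equiv\tfrac12\{[a,b],c\}$ (an \emph{anticommutator} of a commutator with an element of $A$, since $T(a,b,c)+T(b,a,c)=2[a\circ b,c]$ and $T(a,b,c)-T(b,a,c)=\{[a,b],c\}$ with $T(x,y,z)=xyz-zyx$), and there is no identity returning $\{[a,b],c\}$ to $\mathrm{span}_\R[A,A]$ in general --- indeed in the example above $\{1,[a,b]\}$ already sweeps out the obstruction. Whether $\ovl{\mathrm{span}_\R[A,A]}^{w*}=\kR(A)_{\mathrm{skew}}$ when $\kR(A)\cap\i\kR(A)=\{0\}$ is a nontrivial structural statement (plausibly provable for factors via a Lie-ideal argument, since $[A,A]$ is an ideal of the Lie algebra $\kR(A)_{\mathrm{skew}}$, but requiring the classification or the involutive $*$-antiautomorphism in an essential way), and as written your proof does not establish it.
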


In the following statement, observe that the normal finite faithful trace $\tau$ extends to a normal finite faithful trace $\tau$ on the von Neumann algebra $A''$ by Proposition \ref{prop-Ayu1210}.

\begin{prop}
\label{Prop-complementation-1}
Let $\cal{M}$ be a $\JW^*$-algebra whose associated $\JW$-algebra is $A$. Let $\tau$ be a normal finite faithful trace on $A$. Suppose that $A$ is a purely real $\JW$-factor which is not isomorphic to the selfadjoint part of a von Neumann algebra. Then $P_\can \co A'' \to A''$ is a selfadjoint normal unital positive projection whose range is $\cal{M}$. This maps extends to a positive contractive projection $P_{\can,p} \co \L^p(A'') \to \L^p(A'')$ whose range is $\L^p(\cal{M},\tau)$ for any $1 \leq p \leq \infty$.
\end{prop}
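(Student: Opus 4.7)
The plan is to proceed in four stages: verify the basic algebraic properties of $P_\can$ on $A''$, show it is selfadjoint with respect to the extended trace, extend it to $\L^p$ by interpolation, and finally identify the range via Lemma \ref{Lemma-interpolation}.

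The algebraic properties of $P_\can=\frac{\Id+\alpha}{2}$ are immediate: $\alpha^2=\Id$ gives the projection identity, and $\alpha$ being a weak$^*$-continuous unital $*$-antiautomorphism gives normality, positivity, unitality, and hence contractivity. The range is precisely the fixed-point set $\cal{M}=\{x\in A'':\alpha(x)=x\}$.

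To prove selfadjointness I first check $\tau\circ\alpha=\tau$ on $A''$. Since $\alpha(x)=x^*=x$ for every $x\in A=R_\sa$, the normal trace $\tau\circ\alpha$ agrees with $\tau$ on $A$; because $A''$ is a finite factor by Proposition \ref{prop-Ayu151}, uniqueness of the normal finite trace (the normalization being fixed by $\tau(\alpha(1))=\tau(1)$) forces $\tau\circ\alpha=\tau$. Combined with the trace property and the antimultiplicativity of $\alpha$, this yields
$$
\tau(\alpha(x)y)=\tau(\alpha(\alpha(y)x))=\tau(\alpha(y)x)=\tau(x\alpha(y)),
$$
hence $\tau(P_\can(x)y)=\tau(xP_\can(y))$. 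Selfadjointness implies that the preadjoint of $P_\can$ is a positive contraction on $\L^1(A'')\cong A''_*$ which coincides with $P_\can$ on the intersection with $A''$, so complex interpolation (Theorem \ref{Th-bilinear-interpolation}) produces a positive contraction $P_{\can,p}$ on $\L^p(A'')$ for every $1\leq p\leq\infty$; the projection identity $P_{\can,p}^2=P_{\can,p}$ then passes from $A''$ to $\L^p(A'')$ by density.

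For the range identification I apply Lemma \ref{Lemma-interpolation} with $X_0=A''\subset X_1=\L^1(A'')$ and $C$ equal to the range of $P_{\can,1}$. On the $\L^\infty$-side the range is clearly $\cal{M}$; on the $\L^1$-side, $C$ is the annihilator of $\ker P_\can$ in $A''_*$, which is isometric to $\cal{M}_*$ via restriction to $\cal{M}$. The main obstacle I anticipate is checking that these identifications match the embedding $i\co\cal{M}\to\cal{M}_*$ of Definition \ref{def-nonasso-Lp}: for $x\in\cal{M}$, the functional $y\mapsto\tau(xy)$ on $A''$ restricts on $\cal{M}$ to $y\mapsto\tau(x\circ y)=\varphi_x(y)$ because $\tau$ is a trace on $A''$, and it belongs to $C$ because if $\alpha(z)=-z$ then $\tau(xz)=\tau(\alpha(z)\alpha(x))=-\tau(zx)=-\tau(xz)$, so $\tau(xz)=0$. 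With this compatibility in hand, Lemma \ref{Lemma-interpolation} yields an isometric isomorphism between the range of $P_{\can,p}$ in $\L^p(A'')$ and $(\cal{M},\cal{M}_*)_{1/p}=\L^p(\cal{M},\tau)$, completing the proof.
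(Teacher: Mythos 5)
Your proposal is correct and follows essentially the same route as the paper: Proposition \ref{prop-Ayu151} plus uniqueness of the normalized trace on a finite factor to get $\tau\circ\alpha=\tau$, the same anti-automorphism computation for selfadjointness, the $\L^1$-extension via the preadjoint, and Lemma \ref{Lemma-interpolation} for the range. Your explicit verification that the functional $y\mapsto\tau(xy)$ vanishes on the $-1$-eigenspace of $\alpha$ and restricts to $\varphi_x$ on $\cal{M}$ is precisely the identification the paper leaves to the reader, so no gap remains.
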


\begin{proof}
By the previous discussion, there exists an involutive $*$-anti-automorphism $\alpha \co A'' \to A''$ such that $\cal{M}=\{ x \in A'' : \alpha(x)=x \}$. By Proposition \ref{prop-Ayu151}, the von Neumann algebra $A''$ is a factor. Since $\tau$ is finite, the von Neumann algebra $A''$ is finite. By \cite[Theorem 8.2.8 p.~517]{KaR97b}, we deduce that the normalized finite trace of $A''$ is unique. Since $\tau \circ \alpha$ is also a normalized finite trace on the von Neumann algebra $A''$, we conclude that $\alpha$ is trace preserving, that is $\tau \circ \alpha=\tau$. For any $x,y \in A''$, we have
$$
\tau(\alpha(x)y)
=\tau(\alpha(\alpha(x)y))
=\tau(\alpha(y)\alpha^2(x))
=\tau(\alpha(y) x)
=\tau(x\alpha(y)).
$$
So $\alpha$ is selfadjoint in the sense of \cite[p.~43]{JMX06}. Finally, we consider the canonical projection $P_\can \co A'' \to A''$, $x \mapsto \frac{1}{2}(x+\alpha(x))$. It is clear that $P_\can$ is a selfadjoint normal unital positive projection which induces a positive contractive projection $P_{\can,p} \co \L^p(A'') \to \L^p(A'')$ 

By Proposition \ref{Prop-Fabian}, the range of the preadjoint map $P_{\can,*} \co A''_* \to A''_*$ is isometric to the predual $\cal{M}_*$ of the range $\cal{M}$ of $P_{\can}$. Since $P_{\can}$ is selfadjoint, note that by \cite[p.~43]{JMX06} the preadjoint $P_{\can,*} \co A''_* \to A''_*$ identifies to the $\L^1$-extension map $P_{\can,1} \co \L^1(A'') \to \L^1(A'')$. So the range of $P_{\can,1}$ is also $\cal{M}_*$. Using Lemma \ref{Lemma-interpolation} in the second equality with the interpolation couple $(A'',A''_*)$ and $C=\cal{M}_*$, we obtain isometrically
$
\Ran P_{\can,p}
		=(\cal{M},\cal{M}_*)_{\frac{1}{p}} 
		\ov{\eqref{Def-Lp-state-JBW}}{=} \L^p(\cal{M},\tau)
$ 
where we use some identifications in the first equality which are left to the reader (use the projection for decomposing $\A''$ in a sum of two subspaces).
%
%
%
%
%
%
%
%
\end{proof}


\begin{example} \normalfont
Recall that by \cite[Theorem 1.7.2 p.~67]{ARU97}, there exist exactly two non-isomorphic injective $\JW$-factors of type $\II_1$ with separable predual. The first factor $\cal{R}_\sa$ is the selfadjoint part of the unique injective factor $\cal{R}$ of type $\II_1$ with separable predual. We can describe the the second as the set $\cal{A} \ov{\mathrm{def}}{=} \{x \in \cal{R}_\sa : \alpha(x)=x \}$ of fixed selfadjoint points of $\cal{R}$ under an involutive $*$-antiautomorphism $\alpha \co \cal{R} \to \cal{R}$ of the von Neumann algebra $\mathcal{R}$. Although there exists a lot of involutions acting on $\cal{R}$, the associated sets of fixed points are known to be isomorphic by \cite[Corollary 1.5.11 p.~57]{ARU97}, since all involutions of $\cal{R}$ are conjugated by \cite[Corollary 2.10]{Sto80b} (see also \cite{Gio83a} and \cite{Gio83b}). Note that by \cite[Theorem 1.5.2 p.~49]{ARU97}, the $\JW$-algebra $\cal{A}$ is not isomorphic to the selfadjoint part of a von Neumann algebra. By Proposition \ref{Prop-complementation-1}, we observe that $\L^p(\cal{A})$ is positively 1-complemented in the noncommutative $\L^p$-space $\L^p(\cal{R})$ by the positive contractive projection $P_p \co \L^p(\cal{R}) \to \L^p(\cal{R})$, $x \mapsto \frac{1}{2}(x+\alpha_p(x))$.
\end{example}

\paragraph{Type $\I_2$ $\JW$-algebras}

We investigate the last case. Our analysis relies on the structure of $\JW$-algebra of type $\I_2$.

\begin{prop}
\label{prop-spin}
Let $\cal{M}$ be a $\JW^*$-algebra with separable predual such that the associated $\JW$-algebra $A \ov{\mathrm{def}}{=}\cal{M}_\sa$ is of type $\I_2$. Let $\tau$ be a normal finite faithful trace on $A$. Suppose that $1 \leq p < \infty$. Then $\L^p(\cal{M},\tau)$ is isometric to a positively 1-complemented subspace of a noncommutative $\L^p$-space associated with a finite von Neumann algebra $\cal{N}$ whose contractive projection is induced by a selfadjoint positive normal projection $P \co \cal{N} \to \cal{N}$.
\end{prop}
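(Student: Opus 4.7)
The plan is to realise $\cal{M}$ as a $\JW^*$-subalgebra of a finite von Neumann algebra $\cal{N}$ equipped with a normal finite faithful trace $\tau_\cal{N}$ that extends $\tau$. Once this is achieved, I take for $P$ the trace preserving normal faithful Jordan conditional expectation $\cal{N} \to \cal{N}$ with range $\cal{M}$ provided by Proposition \ref{prop-Jordan-conditional-expectation-existence-bis} applied to the $\JBW^*$-algebra $\cal{N}$ and its $\JBW^*$-subalgebra $\cal{M}$. By Proposition \ref{Prop-selfadjoint}, this $P$ is selfadjoint with respect to $\tau_\cal{N}$, and it is also positive, normal, unital and a projection. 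Proposition \ref{prop-proj-Lp} then lifts $P$ to a positive contractive projection $P_p \co \L^p(\cal{N},\tau_\cal{N}) \to \L^p(\cal{N},\tau_\cal{N})$ whose range is isometric to $\L^p(\cal{M},\tau)$, and the tracial part of Example \ref{Ex-AVN} identifies $\L^p(\cal{N},\tau_\cal{N})$ isometrically with the Dixmier noncommutative $\L^p$-space of the finite von Neumann algebra $\cal{N}$; this would yield the conclusion.

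To construct $\cal{N}$, I would apply the structure result of Example \ref{ex-type-II-JW}: the separable predual hypothesis forces the index set to be countable, so
$$
A = \bigoplus_{i \in I} \L^\infty_\R(\Omega_i,S_i)
\qquad \text{and hence} \qquad
\cal{M} = \bigoplus_{i \in I} \L^\infty(\Omega_i) \otvn S_i^\C
$$
for spin factors $S_i$ with complexifications $S_i^\C$. Using the explicit Pauli-matrix and CAR models of Example \ref{spin-factor}, I would embed each $S_i^\C$ as a $\JW^*$-subalgebra of a finite factor $B_i$: take $B_i=\M_{2^{n_i}}(\mathbb{C})$ for a suitable $n_i$ when $\dim S_i < \infty$, and $B_i=\cal{R}$, the hyperfinite $\II_1$ factor obtained as the weak* closure of the CAR algebra in its tracial GNS representation, when $\dim S_i = \infty$. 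Setting $\cal{N} \ov{\mathrm{def}}{=} \bigoplus_{i \in I} \L^\infty(\Omega_i) \otvn B_i$ yields a von Neumann algebra in which $\cal{M}$ sits as a $\JW^*$-subalgebra.

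For the trace compatibility, each $B_i$ admits a unique normal faithful tracial state $\tau_{B_i}$, whose restriction to $S_i^\C$ must coincide with the unique tracial state $\tau_{S_i^\C}$ on the spin factor given by Example \ref{ex-trace-spin}. Normalising so that $\tau(1)=1$, a disintegration over the abelian center forces the restriction of $\tau$ to $\L^\infty(\Omega_i) \otvn S_i^\C$ to factorise as $\mu_i \otvn \tau_{S_i^\C}$ for a finite positive measure $\mu_i$ on $\Omega_i$ with $\sum_i \mu_i(\Omega_i)=1$. Then $\tau_\cal{N} \ov{\mathrm{def}}{=} \sum_i \mu_i \otvn \tau_{B_i}$ is a normalized normal finite faithful trace on the finite von Neumann algebra $\cal{N}$ which extends $\tau$, and the machinery of the first paragraph applies.

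The main obstacle I anticipate is precisely this trace-matching step: showing that the restriction of $\tau$ on each summand factorises cleanly as $\mu_i \otvn \tau_{S_i^\C}$, which is what allows the local measures to assemble into a global trace on $\cal{N}$ extending $\tau$. This rests on the uniqueness of the tracial state on a spin factor together with a standard disintegration argument over the abelian center $\L^\infty(\Omega_i)$. A minor secondary verification, that each embedding $S_i^\C \hookrightarrow B_i$ produces a weak* closed sub-$\JW^*$-algebra so that $\cal{M}$ is a genuine $\JW^*$-subalgebra of $\cal{N}$, is immediate in the finite-dimensional case and follows from the defining closure of the infinite-dimensional spin factor together with the separate weak* continuity of the Jordan product in the other case.
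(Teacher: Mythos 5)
Your proposal follows essentially the same route as the paper: decompose $\cal{M}$ via the type $\I_2$ structure theorem of Example \ref{ex-type-II-JW}, embed each (complex) spin factor trace-compatibly into a matrix algebra or the hyperfinite $\II_1$ factor through the Pauli/CAR realization, take the trace preserving Jordan conditional expectation of Proposition \ref{prop-Jordan-conditional-expectation-existence-bis}, and use its selfadjointness (Proposition \ref{Prop-selfadjoint}) to lift it to a positive contractive projection on $\L^p$ whose range is identified with $\L^p(\cal{M},\tau)$ by interpolation. The only differences are organizational — you assemble one global $\cal{N}$ and invoke Proposition \ref{prop-proj-Lp}, whereas the paper argues summand by summand with Lemma \ref{Lemma-interpolation} and Proposition \ref{Prop-Fabian} and concludes by a direct sum — and the trace-factorization step you flag as the main obstacle is exactly the point the paper also relies on (uniqueness of the tracial state on a spin factor, vanishing of the trace on the generators $s_k$).
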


\begin{proof}
By complexification, we have a normal finite faithful trace $\tau$ on $\cal{M}$ satisfying \eqref{trace}. By Example \ref{ex-type-II-JW}, there exist an index set $I$, a family $(\Omega_i)_{i \in I}$ of second countable locally compact spaces, a family of Radon measures $(\mu_i)_{i \in I}$ on the spaces $\Omega_i$ and a family $(S_i)_{i \in I}$ of spin factors as in Example \ref{spin-factor}, with associated real Hilbert space $\cal{H}_i$  each of dimension at most countable and strictly greater than 1, giving an isomorphisms 
$$
A
=\oplus_{i \in I} \L^\infty_\R(\Omega_i,S_i) 
\quad \text{and} \quad
\cal{M}
=\oplus_{i \in I} \L^\infty(\Omega_i,\cal{S}_i)
$$ 
where $\cal{S}_{i}$ is the $\JW^*$-algebra (<<complex spin factor>>) associated to $S_i$. The trace $\tau$ induces a finite trace $\int_{\Omega_i} \cdot \d \nu_i \ot \tau_i$ on each $\JW$-algebra $\L^\infty_\R(\Omega_i,S_i)$ where $\tau_i$ is the canonical normalized trace on $S_i$ as in Example \ref{ex-trace-spin}. If $\cal{A}$ is the CAR algebra, we can suppose by \cite[Theorem 6.2.2 p.~141]{HOS84} that the $\mathrm{C}^*$-algebra generated by the spin factor $S_i$ is $*$-isomorphic to
\begin{equation*}
\begin{cases}
\M_{2^{n-1}}(\mathbb{C}) \oplus \M_{2^{n-1}}(\mathbb{C}) & \text{ if } \dim \cal{H}_i=2n-1 \\
\M_{2^n}(\mathbb{C}) & \text{ if } \dim \cal{H}_i=2n \\
\cal{A} &\text{ if } \dim \cal{H}_i= \infty
\end{cases}.
\end{equation*}
For any $i \in I$, we consider the following von Neumann algebra
\begin{equation*}
\cal{N}_i
\ov{\mathrm{def}}{=} 
\begin{cases}
\M_{2^{n-1}}(\mathbb{C}) \oplus \M_{2^{n-1}}(\mathbb{C}) & \text{ if } \dim \cal{H}_i=2n-1 \\
\M_{2^n}(\mathbb{C}) & \text{ if } \dim \cal{H}_i=2n \\
\cal{R} &\text{ if } \dim \cal{H}_i= \infty
\end{cases}
\end{equation*} 
where $\cal{R}$ is the unique injective factor of type $\II_1$ with separable predual. Here, we use the realization of $\cal{R}$ given by the weak closure of the range of the GNS representation of the CAR algebra $\cal{A}$ for the tracial state $\rho^{\ot \infty}$ where $\rho$ is the state defined on $\M_{2}(\mathbb{C})$ by $\rho=\frac{1}{2} \tr$, see \cite{KaR97b} for more information on this construction and this result. We equip $\cal{R}$ with the restriction of the vector state given by this GNS representation. It is the unique normal tracial state of $\cal{R}$ and its restriction on $\cal{A}$ is equal to $\rho$. Finally, we equip the factor $\M_{2^n}(\mathbb{C})$ with its normalized trace and $\M_{2^{n-1}}(\mathbb{C}) \oplus \M_{2^{n-1}}(\mathbb{C})$ with the normalized trace obtained from $\tr \oplus \tr$.



Note that the trace of each operator $s_k$ of Example \ref{spin-factor} is equal to 0. So we can suppose that each $\cal{S}_i$ is embedded as a $\JW^*$-subalgebra of $\cal{N}_i$ with the canonical trace on $\cal{S}_i$ (the complexification of the trace defined in Example \ref{ex-trace-spin}) extended to a normal faithful trace $\tau_i$ of $\cal{N}_i$. Hence we can see the $\JW^*$-algebra $\L^\infty(\Omega_i,\cal{S}_i)$ as a $\JW^*$-subalgebra of the $\JW^*$-algebra underlying to the von Neumann algebra $\L^\infty(\Omega_i,\cal{N}_i)$. 

We equip the von Neumann algebra $\L^\infty(\Omega_i,\cal{N}_i)$ with the normalized trace $\int_{\Omega_i} \cdot \d \nu_i \ot \tau_i$. 
By Proposition \ref{prop-Jordan-conditional-expectation-existence-bis}, there exists a trace preserving normal faithful Jordan conditional expectation $Q_i \co \L^\infty(\Omega_i,\cal{N}_i) \to \L^\infty(\Omega_i,\cal{N}_i)$ onto the $\JW^*$-subalgebra $\L^\infty(\Omega_i,\cal{S}_i)$. By Proposition \ref{Prop-selfadjoint}, this map is selfadjoint normal positive and unital. By \cite[p.~43]{JMX06}, this map induces a positive contractive projection $Q_{i,p} \co \L^p(\Omega_i,\cal{N}_i) \to \L^p(\Omega_i,\cal{N}_i)$.


%

By Proposition \ref{Prop-Fabian}, the range of the preadjoint map $Q_{i,*} \co \L^1(\Omega_i,\cal{N}_i) \to \L^1(\Omega_i,\cal{N}_i)$ is isometric to the predual $(\L^\infty(\Omega_i,\cal{S}_i))_*=\L^1(\Omega_i,\L^1(\cal{S}_{i}))$ of the range of $Q_i$. Since $Q_i$ is selfadjoint, note that by \cite[p.~43]{JMX06} the preadjoint $Q_{i,*} \co \L^1(\Omega_i,\cal{N}_i) \to \L^1(\Omega_i,\cal{N}_i)$ identifies to the $\L^1$-extension map $Q_{i,1} \co \L^1(\Omega_i,\cal{N}_i) \to \L^1(\Omega_i,\mathcal{N}_i)$. So the range of $Q_{i,1}$ is also $\L^1(\Omega_i,\L^1(\cal{S}_{i}))$. Now, we use Lemma \ref{Lemma-interpolation} with the interpolation couple $(\L^\infty(\Omega_i,\cal{N}_i),\L^1(\Omega_i,\L^1(\mathcal{N}_i)))$ and $C=\L^1(\Omega_i,\L^1(\cal{S}_{i}))$. We obtain isometrically
\begin{align*}
\MoveEqLeft
\big(\L^\infty(\Omega_i,\cal{N}_i) \cap \L^1(\Omega_i,\L^1(\cal{S}_{i})),\L^1(\Omega_i,\L^1(\cal{N}_i)) \cap \L^1(\Omega_i,\L^1(\cal{S}_{i}))\big)_{\frac{1}{p}} \\
		&=\big(\L^\infty(\Omega_i,\cal{S}_i),\L^1(\Omega_i,\L^1(\cal{S}_i))\big)_{\frac{1}{p}} 
		=\L^p(\Omega_i,\L^p(\cal{S}_i)).
\end{align*}
We infer that the space $\Ran Q_p$ is isometric to the nonassociative $\L^p$-space $\L^p(\Omega_i,\L^p(\cal{S}_i))$. We deduce that this space is a positively 1-complemented subspace of the noncommutative $\L^p$-space $\L^p(\Omega_i,\L^p(\cal{N}_i))$. We conclude with a direct sum argument.
\end{proof}


The results of this section allows us to state the following theorem.

\begin{thm}
\label{th-embedding-intro}
Let $\cal{M}$ be a $\JW^*$-factor with separable predual whose associated $\JW$-algebra $A$ is equipped with a normal finite faithful trace $\tau$. Suppose that $1 \leq p < \infty$. Then $\L^p(\cal{M},\tau)$ is isometric to a positively 1-complemented subspace of a noncommutative $\L^p$-space associated with a finite von Neumann algebra.
\end{thm}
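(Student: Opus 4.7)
The plan is to combine the three case-analyses the section has already put in place, via the decomposition theorem of St\o rmer cited in the introduction to the section. The key observation, stated in the paragraph opening Section \ref{Sec-nonassociative-Lp-spaces}, is that a $\JW^*$-factor with separable predual falls, up to isomorphism of its selfadjoint part $A$, into exactly one of three mutually exclusive structural classes: (i) $A$ is the selfadjoint part of a von Neumann factor, (ii) $A$ is a purely real $\JW$-factor that is not the selfadjoint part of any von Neumann algebra, or (iii) $A$ is of type $\I_2$, i.e.\ a spin factor (a $\JBW^*$-factor that is not of spin type is either selfadjoint of a von Neumann algebra or purely real by St\o rmer's structure theorem together with the type decomposition).

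In case (i), write $\cal{M}=\cal{N}$ for a von Neumann factor $\cal{N}$; then by Example \ref{Ex-AVN} (applied to the trace $\tau$) the nonassociative $\L^p$-space $\L^p(\cal{M},\tau)$ coincides isometrically with the Dixmier noncommutative $\L^p$-space $\L^{p,D}(\cal{N})$, which is trivially positively $1$-complemented in itself via the identity projection on the finite von Neumann algebra $\cal{N}$. In case (ii), the hypotheses of Proposition \ref{Prop-complementation-1} are satisfied: $A$ is purely real, $A$ is not the selfadjoint part of a von Neumann algebra so by Proposition \ref{prop-Ayu151} the envelope $A''$ is a factor, and Proposition \ref{prop-Ayu1210} extends $\tau$ to a normal finite faithful trace on $A''$; the canonical projection $P_\can=\tfrac{1}{2}(\Id+\alpha)$ then interpolates to a positive contraction $P_{\can,p}\co\L^p(A'')\to\L^p(A'')$ with range isometric to $\L^p(\cal{M},\tau)$. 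In case (iii), Proposition \ref{prop-spin} produces a finite von Neumann algebra $\cal{N}$ and a selfadjoint positive normal Jordan conditional expectation $Q\co\cal{N}\to\cal{N}$ onto a copy of $\cal{M}$, so that $Q_p$ is a positive contractive projection on $\L^p(\cal{N})$ with range isometric to $\L^p(\cal{M},\tau)$.

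Putting the three cases together finishes the theorem. The only delicate point is ensuring the trichotomy is exhaustive for a $\JW^*$-factor with separable predual; this is where I would invoke \cite[Theorem 6.4]{Sto66} explicitly. For a $\JW$-factor $A$, the type $\I_2$ part is the spin-factor case (iii); for $A$ of type other than $\I_2$, St\o rmer's result gives that $A$ is reversible, and then either $\kR(A)$ is a real von Neumann algebra (case (ii), purely real) or $\kR(A)\cap\i\kR(A)\neq\{0\}$, in which case $\kR(A)+\i\kR(A)=A''$ and the involutive $*$-antiautomorphism $\alpha$ of $A''$ is inner in the sense that the fixed point $\JW^*$-algebra coincides with $A''$ itself, so $A$ reduces to the selfadjoint part of a von Neumann factor (case (i)). The main obstacle in the write-up is thus purely expository: stating St\o rmer's decomposition in the factor case in a form immediately applicable to the three-way split, so that no further structural analysis is required beyond what Propositions \ref{Prop-complementation-1} and \ref{prop-spin} already do.
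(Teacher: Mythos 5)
Your proposal is correct and follows essentially the same route as the paper: the section's opening paragraph invokes St\o rmer's decomposition \cite[Theorem 6.4]{Sto66} to reduce to exactly your three cases, which are then settled by Example \ref{Ex-AVN} (selfadjoint part of a von Neumann algebra, the ``obvious'' case), Proposition \ref{Prop-complementation-1} (purely real, not selfadjoint part), and Proposition \ref{prop-spin} (type $\I_2$). Your sketch of why the trichotomy is exhaustive is slightly loosely phrased (the correct dichotomy for a reversible factor is that $\kR(A)\cap\i\kR(A)$, being an ideal, is either $\{0\}$ or all of $\kR(A)$, in which case $\kR(A)$ is a complex von Neumann algebra and $A=\kR(A)_\sa$), but this is exactly the content of the cited theorem and not a gap.
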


\begin{remark} \normalfont
If $\cal{M}$ is equipped with a normal \textit{semifinite} faithful trace $\tau$ instead of a normal finite faithful trace, we probably obtain the same result but we did not check all the details. Indeed, similarly to the setting of von Neumann algebras, if a $\JW$-factor $\cal{M}$ admits a normal semifinite faithful trace, then probably it is unique up to scaling. 
In this case, with the notations of the proof of Proposition \ref{Prop-complementation-1}, we have $\tau \circ \alpha=\lambda \tau$ for some scalar $\lambda >0$. Hence
$$
\tau
=\tau \circ \alpha \circ \alpha
=\lambda \tau \circ \alpha
=\lambda^2 \tau.
$$
So $\tau \circ \alpha= \tau$. Consequently slight modifications probably allows us to prove a generalization of Theorem \ref{th-embedding-intro}. 
\end{remark}

\begin{remark} \normalfont
We have not attempted to address the case where the predual is not separable.
\end{remark}
\section{Open questions}
\label{sec-open-questions}

Let $\cal{M}$ be a von Neumann algebra. Recall that the noncommutative $\L^p$-space $\L^p(\cal{M},\varphi)$ is independent of the normal semifinite faithful weight $\varphi$ up to an isometric isomorphism, see \cite[p.~59]{Ter81} and \cite[Theorem 5.1]{Ray03}. So the next question is natural.
 
\begin{quest} 
\label{Open-quest}
Suppose that $1 < p < \infty$. Let $\cal{M}$ be a $\JBW^*$-algebra equipped with a normal faithful state $\varphi$. Is it true that the Banach space $\L^p(\cal{M},\varphi)$ is independent from $\varphi$ ?
\end{quest}

In the case where the normal faithful state $\varphi$ is a trace, we have H\"older's inequality \eqref{Holder-1}. At the present moment, we have no similar result if the state $\varphi$ is not a trace.

\begin{quest} 
\label{Open-quest}
Suppose that $1 < p < \infty$. Let $\cal{M}$ be a $\JBW^*$-algebra equipped with a normal faithful state $\varphi$. Does there exist a generalization of H\"older's inequality ?
\end{quest}

In Section \ref{Sec-nonassociative-Lp-spaces}, we prove some particular cases of the following natural conjecture. 

\begin{conj}
\label{th-embedding}
Let $\cal{M}$ be a $\JW^*$-algebra equipped with a normal semifinite faithful trace $\tau$. Suppose that $1 \leq p < \infty$. Then the nonassociative $\L^p$-space $\L^p(\cal{M},\tau)$ is isometric to a positively 1-complemented subspace of a noncommutative $\L^p$-space associated with a semifinite von Neumann algebra.
\end{conj}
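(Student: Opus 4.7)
The plan is to combine three ingredients: a central reduction of $\cal{M}$ to $\JW^*$-factors, the factor case of Theorem \ref{th-embedding-intro} suitably upgraded from finite to semifinite traces, and a gluing step using $\ell^p$-direct sums or direct integrals of noncommutative $\L^p$-spaces. As a first step, I would invoke a central decomposition for $\JW^*$-algebras (in the spirit of Hanche-Olsen--Størmer and Shultz) to write the center $\Zc(\cal{M})$ as $\L^\infty(\Omega,\mu)$ and to decompose $\cal{M}$ as a measurable field $(\cal{M}_\omega)_{\omega \in \Omega}$ of $\JW^*$-factors; the trace $\tau$ disintegrates accordingly into a measurable family $(\tau_\omega)$ of normal semifinite faithful traces on the fibers.

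Second, I would upgrade the three sub-cases of Section \ref{Sec-nonassociative-Lp-spaces} to the semifinite factor setting. The case where $\cal{M}_\omega$ is the selfadjoint part of a von Neumann factor remains trivial. For the purely real case, the key input is the observation already recorded in the remark after Theorem \ref{th-embedding-intro}: on a $\JW^*$-factor, a normal semifinite faithful trace is unique up to a positive scalar, so if $\alpha \co \cal{M}''_\omega \to \cal{M}''_\omega$ is the involutive $*$-antiautomorphism with fixed-point set $\cal{M}_\omega$ then $\tau \circ \alpha = \lambda\tau$ with $\lambda^2 = 1$, forcing $\lambda = 1$. The canonical projection $P_{\can} \ov{\mathrm{def}}{=} \frac{1}{2}(\Id + \alpha)$ is then a selfadjoint, normal, positive, unital, trace-preserving contraction, and the machinery of \cite{JMX06} for $\L^p$-extensions of selfadjoint maps on semifinite tracial noncommutative $\L^p$-spaces produces $P_{\can,p} \co \L^p(\cal{M}''_\omega) \to \L^p(\cal{M}''_\omega)$ with range isometric to $\L^p(\cal{M}_\omega,\tau_\omega)$, just as in Proposition \ref{Prop-complementation-1}. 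For the type $\I_2$ case, Proposition \ref{prop-spin} goes through verbatim once one replaces the Radon probability measures $\mu_i$ by arbitrary $\sigma$-finite measures in Example \ref{ex-type-II-JW}; the spin factors themselves remain finite, and the only semifiniteness comes from the abelian direction, which is harmlessly absorbed into $\L^\infty(\Omega_i,\cal{N}_i)$ equipped with $\int \cdot \,\d\mu_i \otimes \tau_i$.

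Third, once each fiber $\cal{M}_\omega$ embeds as the range of a positive contractive projection $P_{\omega,p}$ on $\L^p(\cal{N}_\omega)$ for some semifinite von Neumann algebra $\cal{N}_\omega$, one assembles the embeddings. In the $\sigma$-finite situation this is an $\ell^p$-direct sum: the total algebra $\cal{N} \ov{\mathrm{def}}{=} \oplus_{\omega} \cal{N}_\omega$ is semifinite, and the fiberwise projections glue into a positive contractive projection on $\L^p(\cal{N})$ whose range is isometric to $\L^p(\cal{M},\tau)$. The selfadjointness of each $P_\omega$ is crucial here, since it ensures the $\L^p$-extension is simultaneously compatible with the duality $\L^p$/$\L^{p^*}$.

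The hard part will be the non-$\sigma$-finite / non-separable case and the measurable selection of the ambient von Neumann algebras $\cal{N}_\omega$. Direct integral theory for $\JW^*$-algebras is considerably less developed than its associative counterpart, and to construct a global semifinite $\cal{N} = \int^{\oplus}_\Omega \cal{N}_\omega \, \d\mu(\omega)$ together with a measurable fiberwise projection $P_\omega$ one would either need a $\JW^*$-version of Shultz's direct integral decomposition refined to respect the trace, or a workaround by transfinite induction: exhaust $\cal{M}$ by the increasing net of corners $\{e,\cal{M},e\}$ associated with $\tau$-finite projections $e \in \cal{M}$, apply Theorem \ref{th-embedding-intro} (suitably extended to non-separable preduals) to each $\sigma$-finite corner, and then pass to an inductive limit in the category of noncommutative $\L^p$-spaces, checking that isometry and 1-complementation survive. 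Controlling the limit so that the global ambient algebra remains a genuine semifinite von Neumann algebra (rather than some ultraproduct) is the most delicate point of the plan.
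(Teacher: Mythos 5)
You are attempting to prove what the paper states only as Conjecture \ref{th-embedding} in Section \ref{sec-open-questions}: the paper offers no proof, just the remark that one should be able to argue via a direct integral decomposition of the associated $\JW$-algebra into factors, immediately followed by the admission that ``the sense of this direct integral is unclear'' and that no definition seems to be provided in the cited references. Your plan is exactly that strategy --- central disintegration into $\JW^*$-factors, the factor case of Theorem \ref{th-embedding-intro} upgraded from finite to semifinite traces, and a gluing step --- so it does not go beyond what the paper already sketches, and the points you yourself defer as ``the hard part'' are precisely the points the author identifies as the reason the statement remains open. A plan whose delicate steps are all postponed is not a proof.

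Concretely, three gaps remain. First, the disintegration $\cal{M}=\int_\Omega^\oplus \cal{M}_\omega\,\d\mu(\omega)$ into $\JW^*$-factors, together with a compatible disintegration of $\tau$ and a \emph{measurable} selection of ambient von Neumann algebras $\cal{N}_\omega$ and projections $P_\omega$, is not available in usable form: \cite[Proposition VII.3.1]{Ioc84} is a bare statement, and without a working definition of the direct integral your second and third steps have nothing to act on. Second, even granting the reduction to factors, the purely real case needs $\tau\circ\alpha=\tau$, and your argument rests on uniqueness up to scaling of the normal semifinite faithful trace on a $\JW$-factor, which the paper's own remark after Theorem \ref{th-embedding-intro} describes only as ``probably'' true and explicitly unchecked; likewise the type $\I_2$ case invokes Proposition \ref{prop-Jordan-conditional-expectation-existence-bis}, whose proof in the appendix is written for a normalized normal \emph{finite} faithful trace (it uses $0\leq\rho\leq\norm{x}\tau$ and a Radon--Nikodym theorem in the finite setting), so ``verbatim'' is an overstatement. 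Third, the gluing is routine only when the center is atomic, giving an $\ell^p$-direct sum; over a non-atomic base one needs an $\L^p$-direct-integral formalism compatible with positivity and $1$-complementation, which is set up nowhere. None of these obstacles is obviously fatal, but as written your proposal reproduces the paper's conjectural outline rather than closing it.
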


We think that it should be possible to prove this conjecture using direct integral theory for a $\JW^*$-algebra $\cal{M}$ with separable predual. Indeed, it is stated in \cite[Proposition VII.3.1 p.~184]{Ioc84} (see also \cite[Proposition 7.3]{BeI78}, \cite{BeI79} and \cite[p.~162]{BeI80}) that we can write the associated $\JW$-algebra $A$ as a direct integral of $\JBW$-factors, i.e.~there exists a standard Borel space $\Omega$, a finite positive Borel measure $\mu$ on $\Omega$, a measurable field $\omega \mapsto A_\omega$ which is $\mu$-integrable providing an isomorphism
$$
A
=\int_\Omega^\oplus A_\omega \d \mu(\omega)
$$ 
such that each $\JBW$-algebra $A_\omega$ is a factor for $\mu$-almost every $\omega$ with $\Zc(A)=\L^\infty_\R(\Omega,\mu)$. Unfortunately, the sense of this direct integral is unclear. No definition seems to be provided in the previous references.

Note the next question. The answer should be fairly easy. The particular case $p=2$ is true by the first part of Proposition \ref{L2-nonassociative}.

\begin{quest}
Is it true that the formula \eqref{norm-LpNA} defines a norm and that the completion is isometric to the Banach space $\L^{p}(\cal{M},\tau)$ ?
\end{quest}

We finish with the following question.

\begin{quest}
\label{quest-1}
Suppose that $1 \leq p < \infty$ with $p \not=2$. Is it true that each positively contractively complemented subspace of a tracial nonassociative $\L^p$-space $\L^p(\cal{M},\tau)$ is isometric to a (not necessarily tracial) nonassociative $\L^p$-space ? 
\end{quest}

Observe that the positivity assumption is essential here. Indeed, in the case where $p=1$ and where $\cal{M}$ is the von Neumann algebra $\B(H)$ of bounded operators on a complex Hilbert space $H$, it is elementary to find contractively complemented subspaces which are isometric to Hilbert spaces. 

Finally, note that the predual of a $\JBW^*$-algebra is of course the predual of a $\JBW^*$-triple and that the range of a contractive projection on the predual of a $\JBW^*$-triple is isometric to the predual of a $\JBW^*$-triple.
\section{Appendix: existence and uniqueness of Jordan conditional expectations}
\label{Existence}


Let $B$ be a (unital) $\JBW$-subalgebra of a $\JBW$-algebra $A$. A map $Q \co A \to A$ is called a Jordan conditional expectation on $B$ if it is a unital positive map of range $B$ which is $B$-modular, that is 
\begin{equation}
\label{Def-cond-exp-JB}
Q(x \circ Q(y))
=Q(x) \circ Q(y), \quad x,y \in A.
\end{equation}




Now, we give a complete proof of the existence and the uniqueness of a Jordan conditional expectation in the case of a normal faithful \textit{tracial} state.

\begin{prop}
\label{Prop-Jordan-conditional-expectation-existence}
Let $A$ be a $\JBW$-algebra and let $B$ be a $\JBW$-subalgebra of $A$. Let $\tau$ be a normal finite faithful trace on $A$. For each element $x$ of $A$, there exists a unique element $Q(x)$ of $B$ such that for any $y \in B$ we have
\begin{equation}
\label{Def-J-cond-exp}
\tau(Q(x) \circ y)
=\tau(x \circ y).
\end{equation}
The mapping $Q \co A \to A$ is a trace preserving normal faithful Jordan conditional expectation onto $B$ and such a map is unique.
\end{prop}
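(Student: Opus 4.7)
The strategy is to realize $Q$ as the restriction of an orthogonal projection on a Hilbert space built from the trace $\tau$, in parallel with the classical construction of a trace-preserving conditional expectation on a finite von Neumann algebra. I would begin by equipping $A$ with the positive-definite form $\la x, y \ra_\tau \ov{\mathrm{def}}{=} \tau(x \circ y)$, positive definiteness coming from the faithfulness of $\tau$ together with the standard facts that $x \circ x \geq 0$ and $x \circ x = 0$ forces $x=0$. Using the estimate $\tau(x \circ x) \leq \tau(1)\norm{x}_A^2$, the identity map extends to a continuous dense inclusion $A \hookrightarrow H$ where $H$ denotes the Hilbert space completion, and the closure $\ovl{B}$ of $B$ in $H$ is then a closed subspace.

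Let $P \co H \to \ovl{B}$ be the orthogonal projection. For $x \in A$ the vector $P(x) \in \ovl{B}$ is uniquely characterized by the identity $\tau(P(x) \circ y) = \tau(x \circ y)$ for every $y \in B$, which is exactly \eqref{Def-J-cond-exp}. The key point, and the one I expect to be the main obstacle, is showing that $P(x)$ actually lies in the $\JBW$-subalgebra $B$ rather than only in its Hilbert space closure. For $x \in A_+$ with $x \leq c \cdot 1$, the form $y \mapsto \tau(x \circ y)$ is a normal positive linear functional on $B$ dominated by $c\,(\tau|_B)$; my plan is then to invoke (or derive elementarily) a Radon-Nikodym style result for normal positive functionals on a $\JBW$-algebra dominated by a normal finite faithful trace, yielding a unique $z \in B$ with $0 \leq z \leq c \cdot 1$ and $\tau(x \circ y) = \tau(z \circ y)$ for all $y \in B$. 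Uniqueness of $P(x)$ in $\ovl{B}$ then forces $z = P(x)$, so $P(x) \in B$, and the case of a general $x \in A$ is reduced to the positive case via the Jordan decomposition $x = x_+ - x_-$.

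With $Q(x) \ov{\mathrm{def}}{=} P(x)$, the remaining properties are quick consequences of \eqref{Def-J-cond-exp}. Unitality $Q(1)=1$ holds because $1 \in B$; positivity and trace preservation $\tau \circ Q = \tau$ (take $y = 1$) are immediate; faithfulness is a combination of trace preservation with the faithfulness of $\tau$. For the $B$-modularity \eqref{Def-cond-exp-JB}, I would iteratively apply \eqref{Def-J-cond-exp} and the associativity relation \eqref{Def-trace} to compute, for any $z \in B$,
\begin{align*}
\tau\bigl(Q(x \circ Q(y)) \circ z\bigr)
&\ov{\eqref{Def-J-cond-exp}}{=} \tau\bigl((x \circ Q(y)) \circ z\bigr)
\ov{\eqref{Def-trace}}{=} \tau\bigl(x \circ (Q(y) \circ z)\bigr) \\
&\ov{\eqref{Def-J-cond-exp}}{=} \tau\bigl(Q(x) \circ (Q(y) \circ z)\bigr)
\ov{\eqref{Def-trace}}{=} \tau\bigl((Q(x) \circ Q(y)) \circ z\bigr),
\end{align*}
where the fact that $Q(y) \circ z \in B$ (since $B$ is a subalgebra) justifies the second application of \eqref{Def-J-cond-exp}; taking $z = Q(x \circ Q(y)) - Q(x) \circ Q(y) \in B$ and invoking faithfulness of $\tau$ concludes the modularity. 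Normality of $Q$ will follow from the $L^2$-continuity of $P$ together with a monotone approximation argument, and uniqueness of $Q$ is direct from the characterizing identity \eqref{Def-J-cond-exp} and the faithfulness of $\tau|_B$.
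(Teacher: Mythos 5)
Your proposal is correct and follows essentially the same route as the paper: both hinge on the Radon--Nikodym theorem for normal positive functionals on a $\JBW$-algebra dominated by a multiple of the trace (applied to $y \mapsto \tau(x\circ y)$ with $0 \leq x \leq \norm{x}1$), followed by the identical modularity computation via \eqref{Def-trace} and faithfulness of $\tau$. The orthogonal-projection picture on the $\L^2$-completion is a harmless extra framing, since you still need the Radon--Nikodym step to see that $P(x)$ lands in $B$ rather than merely in its Hilbert-space closure.
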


\begin{proof}
If $x \in A_+$, we let $\rho(y) \ov{\mathrm{def}}{=} \tau(x \circ y)$ for any $y \in B$. This form is positive and normal by \cite[Corollary 5.20 p.~149]{AlS03} and \cite[Proposition 2.4 p.~38]{AlS03}.

We always consider $x \in A_+$. By \cite[(1.11)]{AlS03}, the element $\norm{x}1-x$ is positive. Replacing $x$ by $\norm{x}1-x$, it follows that the map $y \mapsto \tau((\norm{x}-x) \circ y)=\norm{x}\tau(y)-\rho(y)$ is a positive linear functional on $B$. Consequently $0 \leq \rho \leq \norm{x} \tau$. From the Radon-Nikodym type theorem \cite[Theorem 3.3 and Proposition 3.5]{AyA85} (see also \cite[Lemma 5.27 p.~156]{AlS03} and \cite{Kin83}), there exists $Q(x)$ in $B_+$ such that 
$$
\rho(y)
=\tau(Q(x) \circ y)
$$ 
for each $y$ in $B$. This proves \eqref{Def-J-cond-exp} when $x$ is a positive element.

An arbitrary $x \in A$ can be expressed by \cite[Proposition 1.28 p.~15]{AlS03} as a linear combination of two positive elements of $A$. From what we have proved, there exists an element $x_0$ of $B$ such that
\begin{equation}
\label{Equa-inter-1244}
\tau(x_0 \circ y)
=\tau(x \circ y), \quad y \in B.
\end{equation}
Suppose that another element $x_1$ of $B$ has this property. For any $y$ in $B$, we have $\tau((x_1-x_0) \circ y)=\tau(x_1\circ y)-\tau(x_0 \circ y)\ov{\eqref{Equa-inter-1244}}{=} \tau(x \circ y)-\tau(x \circ y)=0$. In particular, $\tau((x_1-x_0) \circ (x_1-x_0))=0$. Since $\tau$ is faithful and since $(x_1-x_0) \circ (x_1-x_0)$ is positive by definition \cite[p.~82]{HOS84} the equality $x_1-x_0=0$. So $x_1=x_0$. Hence there exists a unique element $x_0$ of $B$ that satisfies \eqref{Equa-inter-1244}. We let $Q(x) \ov{\mathrm{def}}{=} x_0$. So we have \eqref{Def-J-cond-exp}.

If $x_1,x_2 \in A$, $y \in B$ and $\lambda \in \R$, we have
\begin{align*}
\MoveEqLeft
\tau(Q(\lambda x_1+ x_2) \circ y)
\ov{\eqref{Def-J-cond-exp}}{=} \tau((\lambda x_1+ x_2) \circ y)
=\lambda \tau(x_1 \circ y)+\tau(x_2 \circ y)  \\          
&\ov{\eqref{Def-J-cond-exp}}{=} \lambda\tau(Q(x_1) \circ y)+\tau(Q(x_2) \circ y)
=\tau((\lambda Q(x_1)+ Q(x_2)) \circ y).
\end{align*} 
Furthermore, if $z \in B$, $x,y \in A$ and observing that $Q(y) \circ z$ belongs to $B$, we see that
\begin{align*}
\MoveEqLeft
\tau(Q(x \circ Q(y)) \circ z)            
\ov{\eqref{Def-J-cond-exp}}{=} \tau((x \circ Q(y)) \circ z)
\ov{\eqref{Def-trace}}{=} \tau(x \circ (Q(y) \circ z)) \\
&\ov{\eqref{Def-J-cond-exp}}{=} \tau(Q(x) \circ (Q(y) \circ z))
\ov{\eqref{Def-trace}}{=} \tau((Q(x) \circ Q(y)) \circ z).
\end{align*} 
Moreover, for any $y \in B$ we have $
\tau(Q(1) \circ y)
=\tau(1 \circ y)$. From these equations and the uniqueness, we obtain that
$$
Q(\lambda x_1+ x_2)
=\lambda Q(x_1)+Q(x_2), \quad 
Q(x \circ Q(y))=Q(x) \circ Q(y)
\quad \text{and} \quad 
Q(1)=1.
$$ 
Furthermore, if $x \in A_+$, we deduce from the first part of the proof that $Q(x)$ is an element of $B_+$. Hence $Q$ is a Jordan conditional expectation onto $B$. Replacing $y$ by $1$ in \eqref{Def-J-cond-exp}, we infer that $Q$ is trace preserving. Suppose that $x \in A_+$ and $Q(x)=0$. Since $\tau$ is faithful and $\tau(x)=\tau(Q(x))=0$, we see that $x=0$. Hence $Q$ is faithful.

In order to prove that $Q$ is weak*-to-weak* continuous, we have to show that $\varphi \circ Q \in A_*$ for any $\varphi \in A_*$. By \cite[Proposition 4.5.3 p.~112]{HOS84}, the predual $A_*$ is the linear span of its positive elements. So we can assume that $\varphi$ is a positive normal linear form on $A$. Then $\varphi \circ Q$ is positive, and we have to prove that $\varphi \circ Q$ is normal. Suppose that $(x_i)$ is a bounded increasing net of elements of $A$, with least upper bound $x$ in $A$. Since $Q$ is a positive map, the net $(Q(x_i))$ in $B$ is bounded increasing, has $Q(x)$ as an upper bound in $B$, and consequently has a least upper bound $y$ in $B$ satisfying $y\leq Q(x)$. Since $\tau$ is normal, we have
$$
\tau(y)
=\lim_i \tau(Q(x_i))
=\lim_i \tau(x_i)
=\tau(x)
=\tau(Q(x)).
$$ 
So $\tau(Q(x)-y)=0$. By the faithfulness of $\tau$ and since $Q(x)-y \geq 0$, we conclude that $Q(x)=y$. Since $\varphi$ is normal, we obtain that 
$$
\lim_i \varphi(Q(x_i))
=\varphi(y)
=\varphi(Q(x)).
$$ 
Thus $\varphi \circ Q$ is normal by definition \cite[p.~94]{HOS84}. We conclude that $Q$ is weak*-to-weak* continuous.

Suppose that $Q_0 \co A \to A$ is another trace preserving normal faithful Jordan conditional expectation onto $B$. For any $x \in A$ and any $y \in B$, we have
\begin{align*}
\MoveEqLeft
\tau\big[ (Q-Q_0)(x) \circ y \big]
\ov{\eqref{Def-cond-exp-JB}}{=} \tau\big[ (Q-Q_0)(x \circ y) \big] 
=\tau\big[ Q(x \circ y) \big] -\tau\big[Q_0(x \circ y) \big] \\
&=\tau(x \circ y)-\tau(x \circ y) 
=0.
\end{align*}
Using the description \cite[Theorem 5]{Ioc86} of the predual of $B$, we conclude that $Q(x)=Q_0(x)$.
\end{proof}
%
%
%
%
%
%

\textbf{Acknowledgment}.
The author acknowledges support by the grant ANR-18-CE40-0021 (project HASCON) of the French National Research Agency ANR. The author would like to thank Miguel Cabrera Garc{\'i}a and {\'A}ngel Rodr{\'\i}guez Palacios for providing some information on $\JBW^*$-algebras and $\JBW^*$-triples. We extend our thanks to the referees for their insightful comments and valuable suggestions. 

\small

{\footnotesize


\noindent C\'edric Arhancet\\ 
\noindent 6 rue Didier Daurat, 81000 Albi, France\\
URL: \href{http://sites.google.com/site/cedricarhancet}{https://sites.google.com/site/cedricarhancet}\\
cedric.arhancet@protonmail.com\\

}

\end{document}